\newcommand{\nc}{\newcommand}
\nc{\fg}{\mathfrak{f} } \nc{\vg}{\mathfrak{v} } \nc{\wg}{\mathfrak{w} }
\nc{\zg}{\mathfrak{z} } \nc{\ngo}{\mathfrak{n} } \nc{\kg}{\mathfrak{k} }
\nc{\mg}{\mathfrak{m} } \nc{\bg}{\mathfrak{b} } \nc{\ggo}{\mathfrak{g} } \nc{\eg}{\mathfrak{e} }
\nc{\ggob}{\overline{\mathfrak{g}} } \nc{\sog}{\mathfrak{so} }
\nc{\sug}{\mathfrak{su} } \nc{\spg}{\mathfrak{sp} } \nc{\slg}{\mathfrak{sl} }
\nc{\glg}{\mathfrak{gl} } \nc{\cg}{\mathfrak{c} } \nc{\rg}{\mathfrak{r} }
\nc{\hg}{\mathfrak{h} } \nc{\tg}{\mathfrak{t} } \nc{\ug}{\mathfrak{u} }
\nc{\dg}{\mathfrak{d} } \nc{\ag}{\mathfrak{a} } \nc{\pg}{\mathfrak{p} }
\nc{\sg}{\mathfrak{s} } \nc{\affg}{\mathfrak{aff} } \nc{\qg}{\mathfrak{q} } \nc{\lgo}{\mathfrak{l} }
\nc{\pca}{\mathcal{P}} \nc{\nca}{\mathcal{N}} \nc{\lca}{\mathcal{L}}
\nc{\oca}{\mathcal{O}} \nc{\mca}{\mathcal{M}} \nc{\tca}{\mathcal{T}}
\nc{\aca}{\mathcal{A}} \nc{\cca}{\mathcal{C}} \nc{\gca}{\mathcal{G}}
\nc{\sca}{\mathcal{S}} \nc{\hca}{\mathcal{H}} \nc{\bca}{\mathcal{B}}
\nc{\dca}{\mathcal{D}} \nc{\eca}{\mathcal{E}} \nc{\wca}{\mathcal{W}} \nc{\ica}{\mathcal{I}}
\nc{\vp}{\varphi} \nc{\ddt}{\tfrac{d}{dt}} \nc{\dsdt}{\tfrac{d^2}{dt^2}} \nc{\dds}{\frac{d}{ds}}
\nc{\dpar}{\frac{\partial}{\partial t}} \nc{\im}{\mathrm{i}}
\nc{\SO}{\mathrm{SO}} \nc{\Spe}{\mathrm{Sp}} \nc{\Sl}{\mathrm{SL}}
\nc{\SU}{\mathrm{SU}} \nc{\Or}{\mathrm{O}} \nc{\U}{\mathrm{U}} \nc{\Gl}{\mathrm{GL}}
\nc{\Se}{\mathrm{S}} \nc{\Cl}{\mathrm{Cl}} \nc{\Spin}{\mathrm{Spin}}
\nc{\Pin}{\mathrm{Pin}} \nc{\G}{\mathrm{GL}_n(\RR)} \nc{\g}{\mathfrak{gl}_n(\RR)}
\nc{\Eg}{\mathrm{E}} \nc{\Fg}{\mathrm{F}} \nc{\Gg}{\mathrm{G}}
\nc{\RR}{{\Bbb R}} \nc{\HH}{{\Bbb H}} \nc{\CC}{{\Bbb C}} \nc{\ZZ}{{\Bbb Z}}
\nc{\FF}{{\Bbb F}} \nc{\NN}{{\Bbb N}} \nc{\QQ}{{\Bbb Q}} \nc{\PP}{{\Bbb P}} \nc{\OO}{{\Bbb O}}
\nc{\vs}{\vspace{.2cm}} \nc{\vsp}{\vspace{1cm}} \nc{\ip}{\langle\cdot,\cdot\rangle}
\nc{\ipp}{(\cdot,\cdot)} \nc{\la}{\langle} \nc{\ra}{\rangle} \nc{\unm}{\tfrac{1}{2}}
\nc{\unc}{\tfrac{1}{4}} \nc{\und}{\frac{1}{16}} \nc{\no}{\vs\noindent}
\nc{\lam}{\Lambda^2(\RR^n)^*\otimes\RR^n} \nc{\tangz}{{\rm T}^{\rm Zar}}
\nc{\nor}{{\sf n}}  \nc{\mum}{/\!\!/} \nc{\kir}{/\!\!/\!\!/}
\nc{\Ri}{\tfrac{4\Ric_{\mu}}{||\mu||^2}} \nc{\ds}{\displaystyle}
\nc{\ben}{\begin{enumerate}} \nc{\een}{\end{enumerate}} \nc{\f}{\frac}
\nc{\lb}{[\cdot,\cdot]} \nc{\isn}{\tfrac{1}{||v||^2}}
\nc{\gkp}{(\ggo=\kg\oplus\pg,\ip)} \nc{\ukh}{(\ug=\kg\oplus\hg,\ip)}
\nc{\tgkp}{(\tilde{\ggo}=\kg\oplus\pg,\ip)}
\nc{\wt}{\widetilde}
\nc{\iop}{\mathtt{i}} \nc{\jop}{\mathtt{j}} 
\nc{\Hk}{H_{\kil}} \nc{\gk}{g_{\kil}}
\nc{\Hess}{\operatorname{Hess}} \nc{\ad}{\operatorname{ad}}
\nc{\Ad}{\operatorname{Ad}} \nc{\rank}{\operatorname{rk}}
\nc{\Irr}{\operatorname{Irr}} \nc{\End}{\operatorname{End}}
\nc{\Aut}{\operatorname{Aut}} \nc{\Inn}{\operatorname{Inn}}
\nc{\Der}{\operatorname{Der}} \nc{\Ker}{\operatorname{Ker}}
\nc{\Iso}{\operatorname{Iso}} \nc{\Diff}{\operatorname{Diff}}
\nc{\Lie}{\operatorname{L}} \nc{\tr}{\operatorname{tr}} \nc{\dif}{\operatorname{d}}
\nc{\sen}{\operatorname{sen}} \nc{\modu}{\operatorname{mod}}
\nc{\CRic}{\operatorname{PP}} \nc{\Cric}{\operatorname{P}} \nc{\Ricci}{\operatorname{Ric}}
\nc{\sym}{\operatorname{sym}} \nc{\herm}{\operatorname{herm}} \nc{\symac}{\operatorname{sym^{ac}}}
\nc{\symc}{\operatorname{sym^{c}}} \nc{\scalar}{\operatorname{Sc}}
\nc{\grad}{\operatorname{grad}} \nc{\ricci}{\operatorname{Rc}} \nc{\kil}{\operatorname{B}} \nc{\cas}{\operatorname{C}} \nc{\lic}{\operatorname{L}}
\nc{\Nor}{\operatorname{Norm}}  \nc{\ricc}{\operatorname{Rc^{c}}}
\nc{\Ricc}{\operatorname{Ric^{c}}} \nc{\ricac}{\operatorname{Rc^{ac}}}
\nc{\Ricac}{\operatorname{Ric^{ac}}} \nc{\Riem}{\operatorname{Rm}} \nc{\Sec}{\operatorname{Sec}}
\nc{\riccig}{\operatorname{ric^{\gamma}}} \nc{\mm}{\operatorname{m}} \nc{\Mm}{\operatorname{M}}
\nc{\Le}{\operatorname{L}} \nc{\tang}{\operatorname{T}}
\nc{\level}{\operatorname{level}} \nc{\rad}{\operatorname{r}}
\nc{\abel}{\operatorname{ab}} \nc{\CH}{\operatorname{CH}} \nc{\Cone}{{\mathcal C}} \nc{\CCone}{\operatorname{CC}} \nc{\CP}{{\mathcal P}}
\nc{\mcc}{\operatorname{mcc}} \nc{\Adj}{\operatorname{Adj}}
\nc{\Order}{\operatorname{O}}  \nc{\inj}{\operatorname{inj}} \nc{\proy}{\operatorname{pr}}
\nc{\vol}{\operatorname{vol}} \nc{\Diag}{\operatorname{Dg}} \nc{\Diagg}{\operatorname{Diag}}
\nc{\Spec}{\operatorname{Spec}} \nc{\Ima}{\operatorname{Im}} \nc{\Rea}{\operatorname{Re}}
\nc{\spann}{\operatorname{span}} \nc{\Aff}{\operatorname{Aff}} \nc{\E}{\operatorname{E}} \nc{\id}{\operatorname{id}} \nc{\dete}{\operatorname{det}} \nc{\Crit}{\operatorname{Crit}} \nc{\val}{\operatorname{val}}
\theoremstyle{plain}
\newtheorem{theorem}{Theorem}[section]
\newtheorem{proposition}[theorem]{Proposition}
\newtheorem{corollary}[theorem]{Corollary}
\newtheorem{lemma}[theorem]{Lemma}
\theoremstyle{definition}
\newtheorem{definition}[theorem]{Definition}
\newtheorem{assum}[theorem]{Assumption}
\theoremstyle{remark}
\newtheorem{remark}[theorem]{Remark}
\newtheorem{example}[theorem]{Example}
\title{Einstein metrics on aligned homogeneous spaces with two factors}
\author{Jorge Lauret}  
\author{Cynthia Will}
\address{FaMAF, Universidad Nacional de C\'ordoba and CIEM, CONICET (Argentina)}
\email{jorgelauret@unc.edu.ar} 
\email{cynthia.will@unc.edu.ar}
\thanks{This research was partially supported by three grants from, respectively,  CONICET, Univ. Nac. de C\'ordoba and Foncyt (Argentina)}
\date{\today}
\begin{document}

\maketitle

\begin{abstract}
Given two homogeneous spaces of the form $G_1/K$ and $G_2/K$, where $G_1$ and $G_2$ are compact simple Lie groups, we study the existence problem for $G_1\times G_2$-invariant Einstein metrics on the homogeneous space $M=G_1\times G_2/K$.  For the large subclass $\cca$ of spaces having three pairwise inequivalent  isotropy irreducible summands ($12$ infinite families and $70$ sporadic examples), we obtain that existence is equivalent to the existence of a real root for certain quartic polynomial depending on the dimensions and two Killing constants, which allows a full classification and the possibility to weigh the existence and non-existence pieces of $\cca$.     
\end{abstract}

\tableofcontents

\section{Introduction}\label{intro}

A major open problem in homogeneous Riemannian geometry asks which compact homogeneous spaces $M=G/K$ admit a $G$-invariant Einstein metric.  The necessary and/or sufficient conditions may be in terms of algebraic or Lie theoretical properties of $G$, $K$ and the embedding $K\subset G$, as well as of topological properties of $M$.  However, it is not actually clear what would be a satisfactory answer, if any.  Only three main general sufficient conditions for existence are known, which were obtained by Wang-Ziller \cite{WngZll} and B\"ohm-Wang-Ziller \cite{BhmWngZll}, B\"ohm \cite{Bhm} and Graev \cite{Grv} in terms of, respectively, a graph,  a simplicial complex and a compact semialgebraic set (nerve), all attached to the space of intermediate subalgebras $\kg\subset\hg\subset\ggo$ and their flags (see \cite{BhmKrr2} for a recent exposition on all these deep results).  

In this light, as proposed in \cite{BhmKrr2}, given a large class $\cca$ of homogeneous spaces such that the above sufficient conditions do not hold for any member of $\cca$, one may try to find a necessary and sufficient condition for the existence of an invariant Einstein metric and ponder the existence and non-existence parts of $\cca$.  What is more likely?  This was done for the class of all homogeneous spaces with only two irreducible isotropy representation components in \cite[Theorem 3.1]{WngZll}: existence is equivalent to the existence of a real root for a quadratic polynomial whose coefficients depend on the dimensions of the irreducible components, one Killing constant and two Casimir constants.  A complete classification was obtained in \cite{DckKrr}, providing several non-existence examples as well as existence cases which do not satisfy any of the known sufficient conditions.  Existence is highly likely when $G$ is classical but it almost ties with non-existence for $G$ exceptional.  Two other classes, denoted by $\nca_<$ and $\nca_>$ were studied from this point of view in \cite{Bhm2, BhmKrr2}, though a ponderation of the existence part is still missing.  

In this paper, we consider compact semisimple Lie groups with two simple factors $G=G_1\times G_2$ and homogeneous spaces $M=G/K$ such that $K$ projects non-trivially on both factors.  It is well known that the third Betti number $b_3(M)$ is therefore $\leq 1$ (see \cite{H3}).  We are interested in the case when $b_3(M)=1$, so called {\it aligned} homogeneous spaces (see \cite{H3, BRF}).  For instance, the space $\SU(m)\times\SU(m)/\U(k)_{p,q}$ has $b_3=1$ if and only if $p=q$ (see Example \ref{mmkpq}).  Algebraically, the aligned condition is equivalent to 
$$
\kil_\ggo(Z,Z)=c_1\kil_{\ggo_1}(Z_1,Z_1)=c_2\kil_{\ggo_2}(Z_2,Z_2), \qquad\forall Z=(Z_1,Z_2)\in\kg\subset\ggo=\ggo_1\oplus\ggo_2, 
$$
for unique positive numbers $c_1,c_2$ such that $\tfrac{1}{c_1}+\tfrac{1}{c_2}=1$ (see Definition \ref{alig-def-2} for an alternative equivalent algebraic condition in terms of the Killing constants $\kil_{\pi_i(\kg_j)}=a_{ij}\kil_{\ggo_i}$ of the different simple factors of $\kg$ supporting the name aligned).  Note that $G/K$ is aligned as soon as $K$ is simple or one-dimensional and that $\kg$ is automatically isomorphic to its projection on $\ggo_i$ for $i=1,2$.  

On each aligned space $M^n=G_1\times G_2/K$, a $3$-parameter family of $G$-invariant metrics $g=(x_1,x_2,x_3)$ can be defined in the usual way by using the $\kil_\ggo$-orthogonal reductive decomposition $\ggo=\kg\oplus\pg$ and the $\kil_\ggo$-orthogonal $\Ad(K)$-invariant decomposition 
\begin{equation*}
\pg=\pg_1\oplus\pg_2\oplus\pg_3, \qquad \mbox{where}\quad \pg_3 = \left\{ \left(Z_1,-\tfrac{1}{c_1-1}Z_2\right):Z\in\kg\right\},
\end{equation*}
and $\pg_i$, $i=1,2$, is identified with the subspace of $\ggo_i$ coming from the $\kil_{\ggo_i}$-orthogonal reductive decomposition $\ggo_i=\pi_i(\kg)\oplus\pg_i$ of the homogeneous space $M_i^{n_i}:=G_i/\pi_i(K)$.  Note that $n=n_1+n_2+d$, where $d:=\dim{K}$.  The Ricci curvature of these metrics was computed in \cite{BRF}, they have $3+t$ Ricci eigenvalues ($2+t$ if $K$ is semisimple), where $t$ is the number of simple factors of $K$ (see Proposition \ref{ricggals2}).  Note that in general the space $\mca^G$ of all $G$-invariant metrics can be much larger.  

Our main result concerns the existence problem for Einstein metrics of the form $g=(x_1,x_2,x_3)$.  The case when $G_1=G_2$ and $K$ is diagonally embedded, i.e., $M=H\times H/\Delta K$ for some homogeneous space $H/K$, has already been studied in \cite{HHK}: existence holds if and only if the Casimir operator of the isotropy representation of $H/K$ satisfies that $\cas_\chi=\kappa I_\qg$ for some $\kappa\in\RR$ (i.e., the standard metric on $H/K$ is Einstein), $\kil_\kg=a\kil_\hg|_\kg$ for some $a\in\RR$ and      
$(2\kappa+1)^2 \geq 8a(1-a+\kappa)$.  This inequality holds for most of the spaces satisfying the first two structural conditions,  which consist of $17$ infinite families and $50$ sporadic examples.     

\begin{theorem}\label{s2E-intro}
If an aligned homogeneous space $M=G_1\times G_2/K$ admits an Einstein metric of the form $g=(x_1,x_2,x_3)$, then, for $i=1,2$, the Casimir operator of $G_i/\pi_i(K)$ is given by $\cas_{\chi_i}=\kappa_iI_{\pg_i}$ for some $\kappa_i>0$ (i.e., the standard metric on $G_i/\pi_i(K)$ is Einstein) and 
\begin{enumerate}[{\rm (i)}]
\item either $K$ is abelian and there exists exactly one Einstein metric up to scaling,   

\item or $K$ is semisimple and $\kil_{\pi_i(\kg)}=a_i\kil_{\ggo_i}|_{\pi_i(\kg)}$ for some $0<a_i\leq 1$, $i=1,2$ (e.g., $K$ simple).  In that case, the existence is equivalent to the existence of a real root for certain quartic polynomial $p$ whose coefficients depend on $n_1$, $n_2$, $d$, $a_1$, $a_2$ (here $c_i=\tfrac{a_1+a_2}{a_i}$, $\kappa_i=\tfrac{d(1-a_i)}{n_i}$).  
\end{enumerate}
Moreover, the Einstein metric $g$ is always unstable as a critical point of the scalar curvature functional (see Figure \ref{fig}).
\end{theorem}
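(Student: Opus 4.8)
The plan is to reduce the Einstein equation for metrics $g=(x_1,x_2,x_3)$ to a system of scalar equations using the Ricci curvature formula from Proposition \ref{ricggals2}, and then analyze this system. First I would write down the Ricci eigenvalues $r_1,r_2,r_3$ (plus the Ricci eigenvalues along the simple factors of $\kg$ when $K$ is semisimple) as explicit rational functions of $x_1,x_2,x_3$ (and the Casimir operators $\cas_{\chi_i}$, the Killing constants); the Einstein condition $r_1=r_2=r_3$ (and equality with the eigenvalues on each factor of $\kg$) gives the system. The key structural observation to extract is that the equations coming from the different simple factors of $\kg$ force, on one hand, $\cas_{\chi_i}$ to be scalar — i.e. $\cas_{\chi_i}=\kappa_i I_{\pg_i}$, equivalently the standard metric on $G_i/\pi_i(K)$ is Einstein — and on the other hand (in the semisimple case) that all Killing constants $a_{ij}$ coincide for fixed $i$, which is exactly $\kil_{\pi_i(\kg)}=a_i\kil_{\ggo_i}|_{\pi_i(\kg)}$; the positivity $\kappa_i>0$ follows since $G_i$ is compact simple and $\pi_i(K)$ is proper, and $0<a_i\le 1$ from the aligned normalization $\tfrac1{c_1}+\tfrac1{c_2}=1$ together with $c_i=\tfrac{a_1+a_2}{a_i}$.

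Next I would treat the two cases separately. When $K$ is abelian ($d$-dimensional torus, $t$ possibly $>0$ but each factor one-dimensional), the extra equations collapse and one is left with the two-equation system $r_1=r_3$, $r_2=r_3$ in the three unknowns modulo the overall scaling; I would show this has a unique positive solution up to scaling by solving explicitly — the homogeneity lets one normalize, say, $x_3=1$, and the resulting two equations in $(x_1,x_2)$ decouple enough to be solved directly (this uses $\kappa_i=\tfrac{d(1-a_i)}{n_i}$, which here just records the abelian data). When $K$ is semisimple, I would substitute $\kappa_i=\tfrac{d(1-a_i)}{n_i}$ and $c_i=\tfrac{a_1+a_2}{a_i}$ into the Ricci formulas, normalize the scaling (e.g. fix $x_3=1$ or set $x_1x_2x_3$ appropriately), use one of the two Einstein equations to solve for one variable rationally in terms of another, and substitute into the remaining equation; clearing denominators should yield a single polynomial equation in one variable whose degree is $4$, whose coefficients are polynomials in $n_1,n_2,d,a_1,a_2$. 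Real positive roots of this quartic $p$ must then be matched back to genuine positive metrics — I would check that the change of variables is a bijection between positive roots of $p$ and Einstein metrics (up to scaling), possibly discarding spurious roots introduced by clearing denominators.

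For the final instability assertion, the plan is to invoke the variational characterization of Einstein metrics as critical points of the scalar curvature functional $\Se$ restricted to the space $\mca_1^G$ of unit-volume $G$-invariant metrics, and to exhibit an explicit direction of negative second variation. The natural candidate is a direction inside the $3$-dimensional space spanned by $\pg_1,\pg_2,\pg_3$: scaling the factor $\pg_3$ (the isotropy part) relative to $\pg_1\oplus\pg_2$ while preserving volume. Concretely, I would compute $\Se$ along the curve $g_s$ obtained by multiplying $x_3$ by $e^{s}$ and rescaling $x_1,x_2$ to keep volume fixed, and show $\tfrac{d^2}{ds^2}\big|_{s=0}\Se(g_s)<0$; because $\pg_3$ corresponds to a subalgebra direction (it is essentially the graph copy of $\kg$), the relevant scalar curvature contribution behaves like the scalar curvature of a fibration with totally geodesic-type fibers, and such "collapsing the fiber" directions are standard sources of instability (cf. the Wang--Ziller and B\"ohm--Wang--Ziller analysis). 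The main obstacle I anticipate is \emph{bookkeeping}: carrying the Casimir/Killing constants through the Ricci formula without error, correctly identifying which combinations of equations kill the non-scalar part of $\cas_{\chi_i}$, and verifying that the elimination genuinely produces a quartic (not higher degree with a factorable spurious part) — i.e., the algebra of the reduction, rather than any conceptual difficulty, is where care is needed. The instability part is comparatively soft once the explicit second-variation formula for $\Se$ on this $3$-parameter family is in hand.
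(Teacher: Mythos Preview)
Your overall plan---write down the Ricci eigenvalues from Proposition~\ref{ricggals2}, deduce that $\cas_{\chi_i}$ is scalar and that the $\lambda_j$'s coincide, then eliminate one variable to obtain a quartic---matches the paper's route through Theorem~\ref{s2E} and Proposition~\ref{KssE}. Two steps deserve more care than your sketch suggests. First, the dichotomy ``$K$ abelian or $K$ semisimple with all $\lambda_j$ equal'' does not fall out automatically: since $r_{3,l}$ is \emph{affine} in $\lambda_l$, the equalities $r_{3,0}=\dots=r_{3,t}$ give no information when the coefficient of $\lambda_l$ vanishes, and the paper has to rule this degenerate case out by a short contradiction (it would force $c_1x_2\le 1$, while positivity of $x_1^2$ forces $c_1x_2>1$). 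Second, uniqueness in the abelian case is not as immediate as ``decouple and solve'': the paper substitutes $u=\sqrt{c_1x_2-1}$, reduces to a cubic $q(u)$, and shows $q$ has exactly one real root by checking that $q'$ has negative discriminant.

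The instability argument, however, has a genuine gap. In this paper $G$-instability means there exists a TT-direction with $\scalar''>0$: Einstein metrics are critical points one hopes are local \emph{maxima} of $\scalar|_{\mca_1^G}$, and instability is the failure of that. You propose to exhibit a direction with $\scalar''<0$, namely the $\pg_3$-scaling (``collapsing fiber'') direction; this would only show the metric is not a local minimum, which is not the assertion. In fact the paper computes (Proposition~\ref{EKab}) that $2\rho-L_{33}<0$ along exactly your proposed direction, so your calculation would go through but establish the wrong inequality. The paper's proof (Proposition~\ref{stab}) instead looks at the $\pg_2$-direction and shows $2\rho-L_{22}>0$ using two ingredients that only emerge from the Einstein system: the bound $c_1x_2>1$ (from \eqref{c1x2} and \eqref{E5}) and the structural inequality $\kappa_2\le\tfrac12$ (equivalent to $[222]\ge 0$). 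So the instability step is not ``comparatively soft''; it relies on an inequality for $x_2$ that you only see after setting up the quartic reduction.
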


The class of homogeneous spaces involved in the above theorem is quite large and can be described using the classification of isotropy irreducible spaces obtained by Wolf (see \cite{Bss}) and the classification given in \cite{WngZll2} by Wang and Ziller (see also \cite{stab}): 
\begin{enumerate}[{\small $\bullet$}]
\item $K$ abelian: $1$ infinite family and $7$ sporadic examples.  Here $K$ is a maximal torus of both $G_1$ and $G_2$ (see \S\ref{Kab-sec}).   

\item $K$ simple: $12$ infinite families and $99$ sporadic examples (see \S\ref{Kss-sec}).  

\item $K$ semisimple, non-simple: $2$ infinite families and $24$ sporadic examples.  
\end{enumerate}

The quartic polynomial mentioned in part (ii) of Theorem \ref{s2E-intro} depends only on $n_1$, $n_2$, $d$, $a_1$, $a_2$ but unfortunately, in a very complicated way (see \eqref{pE}), making of the existence problem a really tricky task for $K$ semisimple.    

In \S\ref{mf-sec}, we focus on the class $\cca$ of all aligned spaces $M=G_1\times G_2/K$ such that 
$$
\mca^G=\{ g=(x_1,x_2,x_3):x_i>0\}, 
$$
that is, $G_1/\pi_1(K)$ and $G_2/\pi_2(K)$ are two different isotropy irreducible spaces and $K$ is simple.  The existence of a $G$-invariant Einstein metric on a space in $\cca$ is therefore equivalent to the existence of a real root for the quartic polynomial $p$ (see Theorem \ref{s2E-intro}, (ii)).  Such existence cannot follow from global reasons since there are only three intermediate subalgebras, one of which is contained in the other two, so the graph is always connected and the B\"ohm's simplicial complex and Graev's nerve are both contractible (see \cite{BhmKrr2}).  

The class $\cca$ is still huge, it consists of $12$ infinite families and $70$ sporadic examples (see Table \ref{ii-simple}).  With the help of Maple, we compute the discriminant and other two invariants of the quartic polynomial $p$ in order to solve the existence problem, obtaining the following results: 
\begin{enumerate}[{\small $\bullet$}]
\item The $12$ families are given in Table \ref{flies}.  Existence is much more likely, there are only $3$ non-existence infinite families.   

\item All the spaces such that $G_1/\pi_1(K)$ and $G_2/\pi_2(K)$ are both irreducible symmetric spaces are listed in Table \ref{sym} ($1$ family and $5$ sporadic examples).  There exists an Einstein metric only on one of them in this small subclass.  

\item In Tables \ref{spo} and \ref{spo2}, the remaining $65$ sporadic examples are given.  An invariant Einstein metric exists on exactly $51$ of these spaces.    
\end{enumerate}

Summarizing, among the $70$ sporadic spaces in $\cca$, existence holds exactly for $52$ of them and for $9$ of the $12$ families, so the existence rate on the class $\cca$ is aproximately $75\%$.  

In all the existence cases there are exactly two invariant Einstein metrics.  We note that our exploration provides several new examples of homogeneous spaces with three isotropy irreducible summands which do not admit invariant Einstein metrics.

\vs \noindent {\it Acknowledgements.}  
We are very grateful with Christoph B\"ohm for many helpful comments and suggestions on a first version of this paper.

\section{Aligned homogeneous spaces}\label{preli3}

Homogeneous spaces with the richest third cohomology (other than Lie groups), i.e., the third Betti number satisfies that $b_3(G/K)=s-1$ if $G$ has $s$ simple factors, are called {\it aligned} homogeneous spaces.  We overview in this section the case when $s=2$, which are the homogeneous spaces studied in this paper regarding the existence of invariant Einstein metrics.  See \cite{H3, BRF} for more complete treatments.  

\subsection{Definition}\label{def-sec} 
Given a compact and connected differentiable manifold $M^n$ which is homogeneous, we fix an almost-effective transitive action of a compact connected Lie group $G$ on $M$.  The $G$-action determines a presentation $M=G/K$ of $M$ as a homogeneous space, where $K\subset G$ is the isotropy subgroup at some point $o\in M$. 

We assume that $G$ is semisimple with two simple factors and we consider the decompositions for the corresponding Lie algebras, 
\begin{equation}\label{decs}
\ggo=\ggo_1\oplus\ggo_2, \qquad \kg=\kg_0\oplus\kg_1\oplus\dots\oplus\kg_t, 
\end{equation}
where the $\ggo_i$'s and $\kg_j$'s are simple ideals of $\ggo$ and $\kg$, respectively, and $\kg_0$ is the center of $\kg$.  If $\pi_i:\ggo\rightarrow\ggo_i$ is the usual projection, then we set $Z_i:=\pi_i(Z)$ for any $Z\in\ggo$, so $Z=(Z_1,Z_2)$.  

\begin{remark}\label{GiKj}
Up to finite cover, we have that 
$$
M=G_1\times G_2/K_0\times K_1\times\dots\times K_t,
$$ 
where each $G_i$ and $K_j$ is a Lie group with Lie algebra $\ggo_i$ and $\kg_j$, respectively.  
\end{remark}

The Killing form of a Lie algebra $\hg$ will always be denoted by $\kil_\hg$.  We consider the {\it Killing constants}, defined by 
$$
\kil_{\pi_i(\kg_j)} = a_{ij}\kil_{\ggo_i}|_{\pi_i(\kg_j)}, \qquad i=1,2, \quad j=0,1,\dots,t.  
$$ 
Note that $0\leq a_{ij}\leq 1$, $a_{ij}=0$ if and only if $j=0$ or $\pi_i(\kg_j)=0$, and $a_{ij}=1$ if and only if $\pi_i(\kg_j)=\ggo_i$ (see \cite{DtrZll} for a deep study of these constants).  

\begin{definition}\label{alig-def-2}
A homogeneous space $G/K$ as above with $K$ semisimple (i.e., $\kg_0=0$) is said to be {\it aligned} if $\pi_i(\kg_j)\ne 0$ (i.e., $a_{ij}>0$) for all $i,j$ and the vectors of $\RR^2$ given by  
$$
(a_{1j},a_{2j}), \qquad j=1,\dots,t,
$$ 
are all collinear, i.e., there exist numbers $c_1,c_2>0$ with $\frac{1}{c_1}+\frac{1}{c_2}=1$ such that
$$
(a_{1j},a_{2j}) = \lambda_j(c_1,\dots,c_2) \quad\mbox{for some}\quad \lambda_j>0, \quad\forall j=1,\dots,t \quad \mbox{(i.e., $a_{ij}=\lambda_jc_i$)}.
$$
In the case when $\kg_0\ne 0$, $G/K$ is called {\it aligned} if in addition to the above conditions,     
\begin{equation}\label{al3} 
\kil_{\ggo_i}(Z_i,W_i) = \tfrac{1}{c_i}\kil_\ggo(Z,W), \qquad\forall Z,W\in\kg_0, \quad i=1,2.
\end{equation}   
Since $a_{i0}=0$, we set $\lambda_0:=0$.
\end{definition}

In other words, the ideals $\kg_j$'s are uniformly embedded in each $\ggo_i$ in some sense.  Note that $G/K$ is automatically aligned if $\kg$ is simple or one-dimensional, provided that $\pi_i(\kg)\ne 0$ for $i=1,2$.  Thus any pair $G_1/K$, $G_2/K$ with $K$ simple determines an aligned space, which in particular shows that this is a wild class in some sense, it is just too large, a classification in the usual sense is out of reach.   

The following properties of an aligned homogeneous space $G/K$ easily follow (see \cite{H3}): 
\begin{enumerate}[{\small $\bullet$}]
\item $\pi_i(\kg)\simeq\kg$ for $i=1,2$.  

\item For any $Z,W\in\kg$, 
\begin{equation}\label{al2} 
\kil_{\ggo_i}(Z_i,W_i) = \tfrac{1}{c_i}\kil_\ggo(Z,W), \qquad  i=1,2.  
\end{equation} 
The existence of $c_1,c_2>0$ such that \eqref{al2} holds is an alternative definition of the notion of aligned.  

\item The Killing form of $\kg_j$ is given by
\begin{equation}\label{al1}
\kil_{\kg_j}=\lambda_j\kil_{\ggo}|_{\kg_j}, \qquad\forall j=1,\dots,t.  
\end{equation}
\end{enumerate}
Under the assumption that $\pi_i(\kg)\ne 0$ for $i=1,2$, any homogeneous space $G/K$ of a semisimple $G$ with two simple factors has $b_3(G/K)\leq 1$, where equality holds if and only if $G/K$ is aligned, which is in turn equivalent to the existence of an inner product $\ip$ on $\kg$ such that $Q|_{\kg\times\kg}$ coincides with $\ip$ up to scaling for any bi-invariant symmetric bilinear form $Q$ on $\ggo$ (see \cite[Proposition 4.10]{H3}).

\subsection{Examples}\label{exa-sec}
We now list some  examples and constructions of aligned homogeneous spaces with two factors, as defined in the above section.  

\begin{example}\label{dim5}
The lowest dimensional examples are 
$$
M^5=\SU(2)\times\SU(2)/S^1_{p,q}, \qquad p,q\in\NN, 
$$
where $K=S^1_{p,q}$ is embedded with slope $(p,q)$, i.e., $\kg=\RR(pZ,qZ)$, 
$Z:=\left[\begin{smallmatrix} \im&0\\ 0&-\im\end{smallmatrix}\right]$.  Using that $\kil_{\sug(2)}(X,Y)=4\tr{XY}$, we obtain from \eqref{al3} that $c_1=\tfrac{p^2+q^2}{p^2}$ and $c_2=\tfrac{p^2+q^2}{q^2}$.  Note that $c_1=c_2=2$ if and only if $p=q$.  All these manifolds are diffeomorphic to $S^2\times S^3$, but two of them are equivariantly diffeomorphic if and only if $p/q=p'/q'$.  
\end{example}

\begin{example}\label{mmkpq}
Consider the homogeneous spaces 
$$
M_{p,q}=\SU(m)\times\SU(m)/\U(k)_{p,q}, \qquad k<m, 
$$
where either $p,q\in\NN$ are coprime or $p=q=1$ and the center of $K=\U(k)_{p,q}$ is embedded with slope $(p,q)$, say, 
$$
\kg=\Delta\sug(k)\oplus\RR(pZ,qZ), \qquad\mbox{where}\quad
Z:=\left[\begin{smallmatrix} (m-k)\im I_k&0\\ 0&-k\im I_{m-k}\end{smallmatrix}\right]\in\sug(m).
$$  
Since $a_{11}=a_{12}$, it follows from Definition \ref{alig-def-2} that we must have $c_1=c_2=2$, which implies that this space is aligned if and only if $\kil_{\sug(m)}(pZ,pZ)=\kil_{\sug(m)}(qZ,qZ)$, that is, $p=q=1$.  Remarkably, when $k=m-1$, it is proved in \cite{BhmKrr} that $M_{p,q}$ admits an invariant Einstein metric if and only if $p=q=1$.  The authors notice that the homology group $H_4(M_{p,q},\ZZ)$ is torsion-free if and only if $p=q=1$, relating the existence to a topological property.  We deduce from our viewpoint that there is an additional topological characterization for the existence of invariant Einstein metrics; indeed, $b_3(M_{p,q})\leq 1$ for all $p,q$ and equality holds if and only if $p=q=1$.
\end{example}

\begin{example}\label{GLO}
The following case was studied in \cite{HHK}.  If $\ggo_1=\ggo_2=\hg$ and $\pi_1=\pi_2$, i.e., $G=H\times H$, $H$ simple and $K\subset H$ a subgroup, then $G/\Delta K$ is aligned with 
$$
c_1=c_2=2, \qquad  \lambda_1=\tfrac{a_1}{2},\dots,\lambda_t=\tfrac{a_t}{2}, 
$$
where $\kil_{\kg_j}=a_j\kil_\hg|_{\kg_j}$ for each simple factor $\kg_j$ of $\kg$.  It is easy to see that $M=G/\Delta K$ is diffeomorphic to $(H/K)\times H$.  In the particular case when $K=H$, $M$ is a symmetric space.     
\end{example}

\begin{example}\label{kill-exa} 
Given two compact homogeneous spaces $G_1/H_1$ and $G_2/H_2$ such that $G_i$ is simple, $H_i\simeq K$ and $\kil_{\hg_i}=a_i\kil_{\ggo_i}|_{\hg_i}$, $a_i>0$ (e.g.\ if $K$ is simple, see \cite[pp.35]{DtrZll}) for $i=1,2$, we consider $M=G/\Delta K$, where $G:=G_1\times G_2$, $\Delta K:=\{ (\theta_1(k),\theta_2(k)):k\in K\}$ and $\theta_i:K\rightarrow H_i$ a Lie group isomorphism.  Note that $K$ is necessarily semisimple.  It is easy to see that $M=G/\Delta K$ is an aligned homogeneous space with 
$$
c_1=a_1\sum_{r=1}^2\tfrac{1}{a_r}, \quad  c_2=a_2\sum_{r=1}^2\tfrac{1}{a_r}, 
\qquad \lambda_1=\dots=\lambda_t=\left(\sum_{r=1}^2\tfrac{1}{a_r}\right)^{-1},
$$
and also that any aligned homogeneous space with $K$ semisimple and $\lambda_1=\dots=\lambda_t$ can be constructed in this way.  Note that if $G_1=G_2$, then $a_1=a_2$, $c_1=c_2=2$ and so we recover Example \ref{GLO}.  If $a_1\leq a_2$ then 
$$
1<c_1=\tfrac{a_1+a_2}{a_2}\leq 2\leq c_2=\tfrac{a_1+a_2}{a_1}, \qquad \lambda_j=\tfrac{a_1a_2}{a_1+a_2}, \quad\forall j.
$$    
\end{example}

\begin{example}\label{ex3}
Consider $M=\SU(n_1)\times\SU(n_2)/\SU(k_1)\times\dots\times\SU(k_t)$, where $k_1+\dots+k_t< n_i$ and the standard block diagonal embedding are taken.  It follows from \cite[pp.37]{DtrZll} that $a_{ij}=\tfrac{k_j}{n_i}$, which implies that this space is aligned with 
$$
c_1=\tfrac{n_1+n_2}{n_1}, \qquad c_2=\tfrac{n_1+n_2}{n_2}, \qquad \lambda_j=\tfrac{k_j}{n_1+\dots+n_s}.  
$$
These aligned spaces are therefore different from those provided by Examples \ref{GLO} and \ref{kill-exa}.  
\end{example}

\begin{example}\label{ex4}
 It follows from \cite[pp.38]{DtrZll} and Example \ref{kill-exa} that the spaces (with the standard embeddings)
 $$
 M^{45}=\SU(6)\times\SO(8)/\SU(3)\times\Spe(2), \quad M^{106}=\SO(14)\times \Eg_6/\SU(6)\times\SO(8),
 $$
 are both aligned with $c_1=c_2=2$ and $\lambda_1=\lambda_2=\unc$, since all the Killing constants involved are equal to $\unm$ (the embedding of $\SU(3)$ in $\SO(8)$ considered is $(\CC^3\oplus\overline{\CC^3})_\RR\oplus\RR^2$).  Note that the same holds with $\lambda_1=\unc$ if one considers only one of the simple factors of $K$.  
\end{example}

We note that an aligned space has $c_1=c_2=2$ if and only if $a_{1j}=a_{2j}=:a_j$ for any $j=1,\dots,t$ (unless $K$ is abelian).  In that case, $\lambda_j=\tfrac{a_j}{2}$ for all $j$.   

\begin{example}\label{so} 
Given any compact homogeneous space $G_2/K$ with $G_2$ simple and $K$ semisimple, we consider the homogeneous space $\SO(d)/K$, where $d=\dim{K}$ and the embedding is determined by the adjoint representation of $K$ on $\RR^d=\kg$ (which is isotropy irreducible if $K$ is simple, see \cite[7.49]{Bss}).  According to the construction given in Example \ref{kill-exa}, if we assume that $\kil_\kg=a_2\kil_{\ggo_2}$, then $M^n=\SO(d)\times G_2/\Delta K$, $n=\tfrac{d(d-1)}{2}+n_2$, is an aligned homogeneous space with 
$$
c_1=\tfrac{(d-2)a_2+1}{(d-2)a_2}, \qquad c_2=(d-2)a_2+1, \qquad \lambda_1=\dots=\lambda_t=\tfrac{a_2}{(d-2)a_2+1}.
$$
We are using here that $a_1=\tfrac{1}{d-2}$ (see \cite[Section 7]{stab}).
\end{example}

\subsection{Reductive decomposition}\label{red-sec}
Let $\mca^G$ denote the finite-dimensional manifold of all $G$-invariant Riemannian metrics on a compact homogeneous space $M=G/K$.  For any reductive decomposition $\ggo=\kg\oplus\pg$ (i.e., $\Ad(K)\pg\subset\pg$), giving rise to the usual identification $T_oM\equiv\pg$, we identify any $g\in\mca^G$ with the corresponding $\Ad(K)$-invariant inner product on $\pg$, also denoted by $g$.  

We assume from now on that $M=G/K$ is an aligned homogeneous space with two factors as in Definition \ref{alig-def-2}.  We consider the $\kil_\ggo$-orthogonal reductive decomposition $\ggo=\kg\oplus\pg$ and the $G$-invariant metric $\gk$ defined by $\gk=-\kil_\ggo|_\pg$, so called {\it standard}, as a background metric, and the $\gk$-orthogonal $\Ad(K)$-invariant decomposition 
\begin{equation*}
\pg=\pg_1\oplus\pg_2\oplus\pg_3, \qquad \mbox{where}\quad \pg_3 = \left\{ \left(Z_1,-\tfrac{1}{c_1-1}Z_2\right):Z\in\kg\right\},
\end{equation*}
(recall that $c_2=\tfrac{c_1}{c_1-1}$).  Here each $\pg_i$, $i=1,2$, is identified with the subspace of $\ggo_i$ coming from the $\kil_{\ggo_i}$-orthogonal reductive decomposition 
$$
\ggo_i=\pi_i(\kg)\oplus\pg_i, \qquad i=1,2,
$$ 
of the homogeneous space $M_i:=G_i/\pi_i(K)$.  In this way, as $\Ad(K)$-representations, $\pg_i$ is equivalent to the isotropy representation of the homogeneous space $G_i/\pi_i(K)$ for each $i=1,2$ and $\pg_3$ is equivalent to the adjoint representation of $\kg$.  We note that 
$
\pi_1(\kg)\oplus\pi_2(\kg) = \pg_3\oplus\kg  
$
is a Lie subalgebra of $\ggo$, which is abelian if and only if $\kg$ is abelian.  It is therefore easy to check that 
\begin{align}
&[\pg_1,\pg_1]\subset\pg_1+\pg_3+\kg, \label{pipj0s2}\\  
&[\pg_2,\pg_2]\subset\pg_2+\pg_3+\kg, \label{pipj1s2}\\  
&[\pg_3,\pg_1]\subset\pg_1, \qquad [\pg_3,\pg_2]\subset\pg_2, \label{pipj2s2}, \\ 
&[\pg_3,\pg_3] \subset\pg_3+\kg. \label{pipj3s2}
\end{align}

The subspace $\pg_3$ in turn admits an $\Ad(K)$-invariant decomposition 
\begin{equation}\label{pjdec}
\pg_3=\pg_3^0\oplus\pg_3^1\oplus\dots\oplus\pg_3^t, 
\end{equation}
which is also $\gk$-orthogonal, and for any $l=0,\dots,t$, the subspace $\pg_3^l$ is equivalent to the adjoint representation $\kg_l$ as an $\Ad(K)$-representation (see \cite[Proposition 5.1]{H3}); in particular, $\pg_3^l$ is $\Ad(K)$-irreducible for any $1\leq l$ and they are pairwise inequivalent. 

We focus in this paper on the $G$-invariant metrics of the form
\begin{equation*}
g=x_1\gk|_{\pg_1}+x_2\gk|_{\pg_2}+x_3\gk|_{\pg_3}, \qquad x_1,x_2,x_3>0,
\end{equation*} 
which will be denoted by 
\begin{equation}\label{metg}
g=(x_1,x_2,x_3).  
\end{equation}   
The following notation will be used throughout the paper:
\begin{align*}
&d:=\dim{K}, \qquad d_l:=\dim{\kg_l}, \quad l=0,\dots,t, \qquad \mbox{so}\quad  d=d_0+d_1+\dots+d_t, \\ 
&n_i:=\dim{\pg_i}=\dim{G_i}-d, \quad i=1,2, \quad n:=\dim{M}=n_1+n_2+d.
\end{align*}

\subsection{Ricci curvature}\label{ric-sec}
We consider, for $i=1,2$, the homogeneous space $M_i=G_i/\pi_i(K)$ (see Remark \ref{GiKj}) with $\kil_{\ggo_i}$-orthogonal reductive decomposition $\ggo_i=\pi_i(\kg)\oplus\pg_i$ endowed with its standard metric, which will be denoted by $\gk^i$.  According to \cite[Proposition (1.91)]{WngZll2} (see also \cite[(5)]{HHK} and \cite[(6)]{stab}), 
\begin{equation}\label{MgBi}
\Ricci(\gk^i) = \unm\cas_{\chi_i} + \unc I_{\pg_i} = \unc\sum_\alpha(\ad_{\pg_i}{e^i_\alpha})^2 + \unm I_{\pg_i}, \qquad i=1,2,
\end{equation}
where 
$$
\cas_{\chi_i}:=\cas_{\pg_i,-\kil_{\ggo_i}|_{\pi_i(\kg)}}:\pg_i\longrightarrow\pg_i
$$ 
is the Casimir operator of the isotropy representation $\chi_i:\pi_i(K)\rightarrow\End(\pg_i)$ of $G_i/\pi_i(K)$ with respect to the bi-invariant inner product $-\kil_{\ggo_i}|_{\pi_i(\kg)}$.  Note that $\cas_{\chi_i}\geq 0$, where equality holds if and only if $\pg_i=0$ (i.e., $M_i$ is a point).

\begin{proposition}\label{ricggals2}\cite[Proposition 3.2]{BRF}
The Ricci operator of a metric $g=(x_1,x_2,x_3)$ on an aligned homogeneous space $M=G/K$ with positive constants $c_1,\lambda_1,\dots,\lambda_t$ is given by 
\begin{enumerate}[{\rm (i)}]
\item $\Ricci(g)|_{\pg_1} = 
\tfrac{1}{2x_1}  \left(1 - \tfrac{(c_1-1)x_3}{c_1x_1}\right) \cas_{\chi_1}
+ \tfrac{1}{4x_1}I_{\pg_1}$.  
\item[ ]
\item $\Ricci(g)|_{\pg_2} = 
\tfrac{1}{2x_2} \left(1 - \tfrac{x_3}{c_1x_2} \right) \cas_{\chi_2} 
+ \tfrac{1}{4x_2}I_{\pg_2} $.
\item[ ]
\item The decomposition $\pg=\pg_1\oplus\pg_2\oplus\pg_3^0\oplus\dots\oplus\pg_3^t$ is $\ricci(g)$-orthogonal.
\item[ ]
\item $\Ricci(g)|_{\pg_3^l}= r_{3,l}I_{\pg_3^l}$, $\quad l=0,1,\dots,t$, where 
\begin{align*}
r_{3,l} 
:=& \tfrac{(c_1-1)\lambda_l}{4x_3}\left(\tfrac{c_1^2}{(c_1-1)^2}-\tfrac{x_3^2}{x_1^2} 
-\tfrac{x_3^2}{(c_1-1)^2x_2^2}\right) 
+\tfrac{c_1-1}{4x_3}\left(\tfrac{x_3^2}{c_1x_1^2} 
+\tfrac{x_3^2}{c_1(c_1-1)x_2^2}\right).
\end{align*} 
\end{enumerate}
\end{proposition}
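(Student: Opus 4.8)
I would combine the general formula for the Ricci curvature of an invariant metric with the bracket relations \eqref{pipj0s2}--\eqref{pipj3s2}. Recall that on a compact $G/K$ with $\kil_\ggo$-orthogonal reductive decomposition $\ggo=\kg\oplus\pg$, a $G$-invariant metric $g$ satisfies, for a $g$-unit $X\in\pg$ and a $g$-orthonormal basis $\{X_i\}$ of $\pg$,
\[
\ricci_g(X,X)=-\tfrac12\sum_i\|[X,X_i]_\pg\|_g^2-\tfrac12\kil_\ggo(X,X)+\tfrac14\sum_{i,j}\langle[X_i,X_j]_\pg,X\rangle_g^2
\]
(the correction term involving $\tr\ad$ drops out, $\ggo$ being semisimple), of which \eqref{MgBi} is the case $g=\gk$. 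The plan is to plug in the $g$-orthonormal bases $x_i^{-1/2}f^i_\alpha$ of $\pg_i$, with $f^i_\alpha$ a $(-\kil_\ggo)$-orthonormal basis, together with the polarization of this formula. Since $[\ggo_1,\ggo_2]=0$ one has $[\pg_1,\pg_2]=0$; and by \eqref{pipj0s2}--\eqref{pipj3s2} the only components of brackets that occur are $[\pg_1,\pg_1]_{\pg_1}$, $[\pg_1,\pg_1]_{\pg_3}$, $[\pg_2,\pg_2]_{\pg_2}$, $[\pg_2,\pg_2]_{\pg_3}$, $[\pg_3^l,\pg_1]_{\pg_1}$, $[\pg_3^l,\pg_2]_{\pg_2}$ and $[\pg_3^l,\pg_3^l]_{\pg_3^l}$. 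Hence each block of $\ricci_g$ reduces to a handful of terms and all of the $x_i$-dependence is carried by them.

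For the block $\pg_1$ the aim is to reexpress everything through $\cas_{\chi_1}$, and symmetrically for $\pg_2$ through $\cas_{\chi_2}$. The one genuinely new computation is how the $\pi_1(\kg)$-component of a bracket inside $\ggo_1$ sits in $\pi_1(\kg)\oplus\pi_2(\kg)=\kg\oplus\pg_3$: solving a $2\times 2$ linear system one finds $(W,0)=\tfrac1{c_1}Z+\tfrac{c_1-1}{c_1}U$ whenever $W=\pi_1(Z)$, where $U:=(Z_1,-\tfrac1{c_1-1}Z_2)\in\pg_3$ is the vector attached to $Z\in\kg$ (and $(0,W')=\tfrac{c_1-1}{c_1}(Z-U)$ when $W'=\pi_2(Z)$). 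Since $\kil_\ggo|_{\ggo_i}=\kil_{\ggo_i}$, the alignment identity \eqref{al2} gives $\|\pi_i(Z)\|^2=\tfrac1{c_i}\|Z\|^2$ and hence $\|U\|^2=\tfrac1{c_1-1}\|Z\|^2$ (norms with respect to $-\kil_\ggo$ throughout). Using these, the sums $\sum_\alpha\|[X,f^1_\alpha]_{\pg_3}\|^2$ and $\sum_\gamma\|[f^3_\gamma,X]\|^2$ both become $\tfrac{c_1-1}{c_1}\langle\cas_{\chi_1}X,X\rangle$, while $\sum_\alpha\|[X,f^1_\alpha]_{\pg_1}\|^2=\langle(I-2\cas_{\chi_1})X,X\rangle$ by \eqref{MgBi} applied inside $G_1/\pi_1(K)$. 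Substituting and collecting the coefficients of $\tfrac1{x_1}$ and $\tfrac{x_3}{x_1^2}$ yields (i); for $\pg_2$ the factor $\tfrac{c_1-1}{c_1}$ becomes $\tfrac1{c_1}$, giving (ii).

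For the blocks $\pg_3^l$ the extra ingredient is the bracket inside $\pg_3$: from $[Z_i,W_i]=[Z,W]_i$ one gets $[U_Z,U_W]=\tfrac1{c_1-1}[Z,W]+\tfrac{c_1-2}{c_1-1}U_{[Z,W]}$, so its $\kg$- and $\pg_3$-parts again carry universal coefficients. The remaining structure constants are read off from the adjoint representations of the simple ideals $\kg_l$: by \eqref{al1} the Casimir of $\ad\kg_l$ with respect to $-\kil_\ggo|_{\kg_l}$ is $\lambda_l I$, and, transporting via $\pi_i$ and using $\kil_{\pi_i(\kg_l)}=a_{il}\kil_{\ggo_i}|_{\pi_i(\kg_l)}$, the Casimir of $\ad\pi_i(\kg_l)$ with respect to $-\kil_{\ggo_i}|_{\pi_i(\kg_l)}$ is $a_{il}I$. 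Feeding these, the norm relations above, and the identities $c_2=\tfrac{c_1}{c_1-1}$ and $a_{il}=\lambda_l c_i$ into the reduced formula, and then simplifying, produces $r_{3,l}$ as in (iv). For the orthogonality statement (iii): the $\pg_1$--$\pg_2$ part vanishes at once from \eqref{pipj0s2}--\eqref{pipj3s2} and $\kil_\ggo(\ggo_1,\ggo_2)=0$; and for the $\pg_i$--$\pg_3^l$ and $\pg_3^l$--$\pg_3^m$ ($l\ne m$) parts every surviving term of the polarized formula is, after invoking invariance of $\kil_\ggo$, a trace $\tr(\proy_{\pg_i}\circ\ad A\,\ad B|_{\pg_i})$ with $A\in\pg_i$, $B\in\pi_i(\kg)$ (resp. $A\in\pi_i(\kg_l)$, $B\in\pi_i(\kg_m)$); in each case the $\pi_i(\kg)\to\pi_i(\kg)$ block of $\ad A\,\ad B$ vanishes, so this trace equals $\tr_{\ggo_i}(\ad A\,\ad B)=\kil_{\ggo_i}(A,B)$, which is zero because the reductive decomposition of $\ggo_i$ is $\kil_{\ggo_i}$-orthogonal and $\kil_\ggo(\kg_l,\kg_m)=0$.

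The difficulty here is bookkeeping rather than ideas. One must keep rigorously apart the three copies of $\kg$ in play --- $\kg$ itself, $\pg_3$, and each $\pi_i(\kg)\subset\ggo_i$ --- each with its own relevant inner product and Killing constants; track the correct powers of $c_1-1$ and of $c_i$ through every sum; and, at the end, coax the assembled expression for the $\pg_3^l$-block into the exact form in Proposition \ref{ricggals2}(iv), which bears little visible resemblance to the raw output before the relations $c_2=\tfrac{c_1}{c_1-1}$ and $a_{il}=\lambda_l c_i$ are used.
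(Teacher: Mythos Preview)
Your approach is correct and genuinely different from the paper's. The paper does not recompute the Ricci operator from scratch: it simply invokes the general Ricci formula for aligned spaces proved in \cite[Proposition~3.2]{BRF} (with parameters $A_3,B_3$ depending on an auxiliary background metric), specializes to $g_b=\gk$ so that $A_3=-\tfrac{1}{c_1-1}=-B_3$, and then spends the entire displayed computation on algebraically massaging the resulting expression for $r_{3,l}$ into the compact form stated. Parts (i)--(iii) are not re-argued at all; they are inherited verbatim from \cite{BRF}.

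You instead rebuild everything directly from the standard Ricci formula and the bracket relations \eqref{pipj0s2}--\eqref{pipj3s2}, which is essentially the method the paper uses later in \S\ref{ijk-sec} (Lemma~\ref{ijk} and Corollary~\ref{rics2str}) to give an \emph{alternative} proof under the extra hypothesis $\cas_{\chi_i}=\kappa_i I$, $\lambda_1=\dots=\lambda_t$. Your version removes that hypothesis: by keeping the operators $\cas_{\chi_i}$ intact rather than replacing them by scalars, and by tracking each $\lambda_l$ separately via the Casimir of $\ad\kg_l$, you recover the full statement of Proposition~\ref{ricggals2}. The decomposition $(W,0)=\tfrac{1}{c_1}Z+\tfrac{c_1-1}{c_1}U$, the norm identity $\|U\|^2=\tfrac{1}{c_1-1}\|Z\|^2$, and the bracket $[U_Z,U_W]=\tfrac{1}{c_1-1}[Z,W]+\tfrac{c_1-2}{c_1-1}U_{[Z,W]}$ are exactly the right bookkeeping devices, and your orthogonality argument for (iii) via $\tr_{\ggo_i}(\ad A\,\ad B)=\kil_{\ggo_i}(A,B)$ is clean. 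What you gain is a self-contained proof that does not rely on \cite{BRF}; what the paper's route gains is brevity, since the heavy lifting was already done elsewhere and only the $r_{3,l}$ simplification remains.
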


\begin{proof}
We use the notation in \cite[Sections 2 and 3]{BRF} and the formula for the Ricci curvature of aligned homogeneous spaces given in \cite[Proposition 3.2]{BRF}, where $A_3=-\tfrac{c_2z_1}{c_1z_2}$ and $B_3=\tfrac{z_1}{c_1}+A_3^2\tfrac{z_2}{c_2}$ .  Since we are considering here $g_b=\gk$, i.e., $z_1=z_2=1$, we have that 
$A_3=-\tfrac{1}{c_1-1}=-B_3$ (recall that $c_2=\tfrac{c_1}{c_1-1}$), thus the proposition is a direct application of the formulas given in \cite[Proposition 3.2]{BRF}, except for the formula for $r_{3,l}$, which is obtained as follows: 
\begin{align*}
r_{3,l} =& \tfrac{\lambda_l}{4B_3x_3}\left(\tfrac{2x_1^2-x_3^2}{x_1^2} 
+\tfrac{(2x_2^2-x_3^2)A_3^2}{x_2^2}  
-\tfrac{1+A_3}{B_3}\left(\tfrac{1}{c_1}+\tfrac{1}{c_2}A_3^3\right)\right) \\ 
&+\tfrac{1}{4B_3x_3}\left(2\left(\tfrac{1}{c_1}+\tfrac{1}{c_2}A_3^2\right) 
-\tfrac{2x_1^2-x_3^2}{c_1x_1^2} 
- \tfrac{(2x_2^2-x_3^2)A_3^2}{c_2x_2^2}\right)\\ 
=& \tfrac{(c_1-1)\lambda_l}{4x_3}\left(\tfrac{2x_1^2-x_3^2}{x_1^2} 
+\tfrac{2x_2^2-x_3^2}{(c_1-1)^2x_2^2}  
-(c_1-2)\left(\tfrac{1}{c_1}-\tfrac{1}{c_1(c_1-1)^2}\right)\right) \\ 
&+\tfrac{c_1-1}{4x_3}\left(2\left(\tfrac{1}{c_1}+\tfrac{1}{c_1(c_1-1)}\right) 
-\tfrac{2x_1^2-x_3^2}{c_1x_1^2} 
- \tfrac{2x_2^2-x_3^2}{c_1(c_1-1)x_2^2}\right)\\ 
%
%
=& \tfrac{(c_1-1)\lambda_l}{4x_3}\left(\tfrac{2x_1^2-x_3^2}{x_1^2} 
+\tfrac{2x_2^2-x_3^2}{(c_1-1)^2x_2^2}  
-\tfrac{(c_1-2)^2}{(c_1-1)^2}\right) 
+\tfrac{c_1-1}{4x_3}\left(\tfrac{2}{c_1-1} 
-\tfrac{2x_1^2-x_3^2}{c_1x_1^2} 
- \tfrac{2x_2^2-x_3^2}{c_1(c_1-1)x_2^2}\right) \\
=& \tfrac{(c_1-1)\lambda_l}{4x_3}\left(\tfrac{c_1^2}{(c_1-1)^2}-\tfrac{x_3^2}{x_1^2} 
-\tfrac{x_3^2}{(c_1-1)^2x_2^2}\right) 
+\tfrac{c_1-1}{4x_3}\left(\tfrac{x_3^2}{c_1x_1^2} 
+\tfrac{x_3^2}{c_1(c_1-1)x_2^2}\right),
\end{align*}   
concluding the proof.  
\end{proof}

\section{Structural constants}\label{ijk-sec}
We provide in this section an alternative proof of the formula for the Ricci curvature of an aligned homogeneous space $M=G_1\times G_2/\Delta K$ given in Proposition \ref{ricggals2}, in the case when the existence of an  Einstein metric of the form $g=(x_1,x_2,x_3)$ is possible.  We therefore make the following assumption in this section:

\begin{assum}
$\cas_{\chi_1}=\kappa_1I_{\pg_1}$ and $\cas_{\chi_2}=\kappa_2I_{\pg_2}$ for some $\kappa_1,\kappa_2>0$ and either $K$ is semisimple and $\kil_{\pi_1(\kg)}=a_1\kil_{\ggo_1}|_{\pi_2(\kg)}$ and $\kil_\kg=a_2\kil_{\ggo_2}|_\kg$ (i.e., $\lambda_1=\dots=\lambda_t=:\lambda$ and the construction given in Example \ref{kill-exa} applies) or $K$ is abelian (i.e., $\lambda=0$).  
\end{assum}

Given any homogeneous space $G/K$ and the $Q$-orthogonal reductive decomposition $\ggo=\kg\oplus\pg$ with respect to a bi-invariant inner product $Q$ on $\ggo$, the so called {\it structural constants} of a $Q$-orthogonal decomposition
$
\pg=\pg_1\oplus\dots\oplus\pg_r
$ 
in $\Ad(K)$-invariant  subspaces (not necessarily $\Ad(K)$-irreducible) are defined by
\begin{equation}\label{ijk-gen}
[ijk]:=\sum_{\alpha,\beta,\gamma} Q([e_{\alpha}^i,e_{\beta}^j], e_{\gamma}^k)^2,
\end{equation}
where $\{ e_\alpha^i\}$, $\{ e_{\beta}^j\}$ and $\{ e_{\gamma}^k\}$ are $Q$-orthonormal basis of $\pg_i$, $\pg_j$ and $\pg_k$, respectively.  

\begin{lemma}\label{ijk}
The nonzero structural constants of the $\gk$-orthogonal reductive complement $\pg=\pg_1\oplus\pg_2\oplus\pg_3$ are given by 
$$
\begin{array}{c}
[111]=(1-2\kappa_1)n_1, \qquad [222]=(1-2\kappa_2)n_2, \qquad [333]=\tfrac{(c_1-2)^2\lambda d}{c_1-1}, \\

[113]=\tfrac{(c_1-1)\kappa_1n_1}{c_1} ,
\qquad 
[223]=\tfrac{\kappa_2n_2}{c_1}. 
\end{array}
$$
\end{lemma}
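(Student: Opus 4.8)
The plan is to compute each structural constant $[ijk] = \sum_{\alpha,\beta,\gamma} \gk([e_\alpha^i,e_\beta^j],e_\gamma^k)^2$ by relating the relevant bracket operators to Casimir operators, so that the assumption $\cas_{\chi_i}=\kappa_i I_{\pg_i}$ (together with $\kil_{\pi_i(\kg)}=a_i\kil_{\ggo_i}$ and $\lambda_1=\dots=\lambda_t=\lambda$) collapses everything to a trace of a scalar operator. First I would record the elementary but essential fact that, for an $\Ad(K)$-invariant $\gk$-orthogonal decomposition, $[ijk]$ is symmetric in its three indices (because $\gk=-\kil_\ggo$ is bi-invariant, so $\kil_\ggo([X,Y],Z)=\kil_\ggo(X,[Y,Z])$), and that for a fixed pair of indices one has $\sum_j [ij j'] = -\tr\big((\ad e_\alpha^i)_{\pg_{j}\to\pg_{j'}}\cdots\big)$ type identities; concretely, $\sum_k [i j k] = -\sum_{\alpha,\beta}\kil_\ggo\big([e_\alpha^i,[e_\alpha^i,e_\beta^j]]_{\pg},e_\beta^j\big)$ when summed appropriately. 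The bracket relations \eqref{pipj0s2}–\eqref{pipj3s2} already tell us which $[ijk]$ can be nonzero: only $[111],[222],[333],[113],[223]$ (and permutations), since $[\pg_1,\pg_2]=0$, $[\pg_3,\pg_1]\subset\pg_1$, $[\pg_3,\pg_2]\subset\pg_2$.

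Next I would treat the three "mixed" constants $[113]$ and $[223]$. The key observation is that $\pg_3$ together with $\kg$ forms the subalgebra $\pi_1(\kg)\oplus\pi_2(\kg)$, and under the identification of $\pg_1$ with the isotropy complement of $G_1/\pi_1(K)$, the action of $\pg_3$ on $\pg_1$ is (up to the scaling factor coming from $\pg_3=\{(Z_1,-\tfrac{1}{c_1-1}Z_2)\}$ versus $\kg=\{(Z_1,Z_2)\}$) just the isotropy action $\chi_1$. Thus $[113]$ is, up to an explicit constant, $\sum_{\alpha}\kil_\ggo\big((\ad_{\pg_1}e_\alpha^3)^2 w,w\big)$ summed over a $\gk$-orthonormal basis of $\pg_1$ and of $\pg_3$, which by definition of the Casimir operator equals $-\tr_{\pg_1}\cas_{\chi_1}$ rescaled — hence $\kappa_1 n_1$ times the appropriate power of the ratio between the $\gk$-norm on $\pg_3$ and the $-\kil_{\ggo_1}|_{\pi_1(\kg)}$-norm. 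Working out that ratio using \eqref{al2} (which gives $\kil_{\ggo_1}(Z_1,W_1)=\tfrac1{c_1}\kil_\ggo(Z,W)$ on $\kg$) and the fact that a unit vector of $\pg_3$ has $\ggo_1$-component of squared $\gk$-length $\tfrac{c_1-1}{c_1}$ should produce the stated $\tfrac{(c_1-1)\kappa_1 n_1}{c_1}$, and symmetrically $[223]=\tfrac{\kappa_2 n_2}{c_1}$ (the asymmetry between the two reflecting that the $\ggo_2$-component of a unit vector of $\pg_3$ has squared length $\tfrac1{c_1}$, since $c_2=\tfrac{c_1}{c_1-1}$).

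For the "pure" constants I would use the standard identity on the homogeneous space $M_i=G_i/\pi_i(K)$: from \eqref{MgBi}, $\Ricci(\gk^i)=\tfrac14\sum_\alpha(\ad_{\pg_i}e_\alpha^i)^2+\tfrac12 I_{\pg_i}$, and the general formula for the standard Ricci curvature in terms of structural constants gives $\Ricci(\gk^i)=\tfrac12 I_{\pg_i}-\tfrac1{2\dim}\,[i_\kg i_\kg]\cdots$; combined with $\cas_{\chi_i}=\kappa_i I_{\pg_i}$ this yields $\sum_\alpha(\ad_{\pg_i}e_\alpha^i)^2 = (2\kappa_i-1)I_{\pg_i}$ (acting on $\pg_i$), whose negative trace is $(1-2\kappa_i)n_i=[iii]$ — here I must be careful that $[iii]$ as defined uses the bracket projected to $\pg_i$ only, which is exactly what enters $(\ad_{\pg_i}e_\alpha^i)^2$. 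For $[333]$ I would observe that $\pg_3$ with the bracket projected to $\pg_3$ behaves like the adjoint representation of $\kg$ with a rescaled invariant form; more precisely $[\pg_3,\pg_3]\subset\pg_3+\kg$, and the $\pg_3$-component of $[e^3_\alpha,e^3_\beta]$ is governed by the structure constants of $\kg$ scaled by the factor relating $\gk|_{\pg_3}$ to $\kil_\kg$. Using \eqref{al1} ($\kil_{\kg_j}=\lambda\kil_\ggo|_{\kg_j}$) and chasing the constant $-\tfrac1{c_1-1}$ through, one gets $[333]=\tfrac{(c_1-2)^2\lambda d}{c_1-1}$; the numerator $(c_1-2)^2$ appears because the $\pg_3$-projection of a bracket of two $\pg_3$-vectors picks up the factor $\big(1+(-\tfrac1{c_1-1})\big)\big/\big(\text{normalization}\big)$ and $(1-\tfrac1{c_1-1})$-type combinations, consistent with the $c_1=2$ case (diagonal embedding in $H\times H$) where $\pg_3\cong\kg$ is a subalgebra and $[333]$ indeed vanishes.

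The main obstacle I anticipate is not any single identity but the careful bookkeeping of the three competing normalizations — the $\gk=-\kil_\ggo$ inner product on $\pg$, the $-\kil_{\ggo_i}|_{\pi_i(\kg)}$ used to define $\cas_{\chi_i}$, and the $\kil_\kg$ appearing in \eqref{al1} — together with the non-orthonormal "slope" vectors spanning $\pg_3$. Getting the powers of $c_1$, $c_1-1$, and the factors of $\lambda$ exactly right (especially in $[333]$, where squares of $(c_1-2)$ must emerge) is where the computation is delicate; I would double-check each constant against the two sanity checks available in the paper — the symmetric case $c_1=c_2=2$ of Example~\ref{GLO}, and consistency with the Ricci formula in Proposition~\ref{ricggals2} when the structural constants are fed into the general homogeneous Ricci formula.
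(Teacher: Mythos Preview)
Your proposal is correct and follows essentially the same approach as the paper: both compute $[iii]$ via \eqref{MgBi} and the assumption $\cas_{\chi_i}=\kappa_i I_{\pg_i}$, both compute $[113]$ and $[223]$ by recognizing the $\pg_3$-action on $\pg_i$ as a rescaled isotropy action and tracking the normalization through \eqref{al2}, and both handle $[333]$ via the adjoint structure of $\kg$ and \eqref{al1}. The only difference is cosmetic: the paper fixes at the outset the explicit $\gk$-orthonormal basis $e^3_\alpha=\sqrt{c_1-1}\,(Z_1^\alpha,-\tfrac{1}{c_1-1}Z_2^\alpha)$ of $\pg_3$ (with $\{Z^\alpha\}$ a $-\kil_\ggo$-orthonormal basis of $\kg$), which makes the bookkeeping you flag as the main obstacle entirely mechanical rather than a matter of chasing competing normalizations.
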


\begin{proof}
The union of the $\gk$-orthonormal basis $\{e^3_\alpha=\sqrt{c_1-1}(Z_1^\alpha,-\tfrac{1}{c_1-1}Z_2^\alpha)\}$ of $\pg_3$, where $\{ Z^\alpha\}$ is a $-\kil_\ggo$-orthonormal basis of $\kg$, and $\gk$-orthonormal bases $\{e_\alpha^i\}_{\alpha=1}^{\dim{\pg_i}}$ of $\pg_i$, $i=1,2$, form the $\gk$-orthonormal basis of $\pg$ which will be used in the computations.  
  
According to \eqref{MgBi}, for $i=1,2$,
$$
[iii] = \sum_{\alpha,\beta,\gamma} \gk([e^i_\alpha,e^i_\beta],e^i_\gamma)^2 
= -\sum_\alpha\tr{(\ad_{\pg_i}{e^i_\alpha})^2} = -2\tr{\cas_{\chi_i}}+\tr{I_{\pg_i}} = (1-2\kappa_i)n_i,
$$
and on the other hand, using that $-\kil_{\ggo_i}(Z_i^\alpha, Z_i^\beta)=\tfrac{1}{c_i}\delta_{\alpha\beta}$ by \eqref{al2}, we obtain
\begin{align*}
[113] =& \sum_{\alpha,\beta,\gamma} \gk([e^3_\alpha,e^1_\beta],e^1_\gamma)^2 
= (c_1-1)\sum_{\alpha,\beta,\gamma} \gk([Z_1^\alpha,e^1_\beta],e^1_\gamma)^2 
= (c_1-1)\sum_{\alpha} -\tr{(\ad{Z_1^\alpha})^2} \\
=&  (c_1-1)\tfrac{\tr{\cas_{\chi_1}}}{c_1} =(c_1-1) \tfrac{\kappa_1n_1}{c_1},\\
[223] =& \sum_{\alpha,\beta,\gamma} \gk([e^3_\alpha,e^2_\beta],e^2_\gamma)^2 
= \tfrac{1}{c_1-1}\sum_{\alpha,\beta,\gamma} \gk([Z_2^\alpha,e^2_\beta],e^2_\gamma)^2 
= \tfrac{1}{(c_1-1)}\sum_{\alpha} -\tr{(\ad{Z_2^\alpha})^2} \\
=&  \tfrac{1}{c_1-1}\tfrac{\tr{\cas_{\chi_2}}}{c_2} = \tfrac{\kappa_2n_2}{c_1}.  
\end{align*}

Finally, we have that 
\begin{align*}
[333] =& \sum_{\alpha,\beta,\gamma} \gk([e^3_\alpha,e^3_\beta],e^3_\gamma)^2  
= \sum_{\alpha} -\tr{(\ad{e^3_\alpha}|_{\pg_3})^2} \\
=& (c_1-2)\left(\tfrac{1}{c_1}-\tfrac{1}{c_1(c_1-1)^2}\right)\lambda \sum_{\alpha}\gk\left(\sqrt{c_1-1}Z^\alpha, \sqrt{c_1-1}Z^\alpha\right)\\ 
=& (c_1-2)\left(\tfrac{1}{c_1}-\tfrac{1}{c_1(c_1-1)^2}\right)\lambda (c_1-1)d 
=\tfrac{(c_1-2)^2\lambda d}{c_1-1},
\end{align*}
concluding the proof.  
\end{proof} 

\begin{corollary}\label{rics2str}
The Ricci curvature of the metric $g=(x_1,x_2,x_3)_{\gk}$ satisfies that $\ricci(g)(\pg_i,\pg_j)=0$ for all $i\ne j$ and $\Ricci(g)|_{\pg_i} = r_iI_{\pg_i}$, where 
$$
\begin{array}{c}
r_1
= \tfrac{1+2\kappa_1}{4}\tfrac{1}{x_1} - \tfrac{(c_1-1)\kappa_1}{2c_1}\tfrac{x_3}{x_1^2}, \qquad 
r_2 
= \tfrac{1+2\kappa_2}{4}\tfrac{1}{x_2} - \tfrac{\kappa_2}{2c_1}\tfrac{x_3}{x_2^2}, \\ \\ 
r_3 = \left(\tfrac{1}{2} - \tfrac{(c_1-1)(1-c_1\lambda)}{2c_1} - \tfrac{(c_1-1-c_1\lambda)}{2c_1(c_1-1)} - \tfrac{(c_1-2)^2\lambda }{4(c_1-1)}\right)\tfrac{1}{x_3} 
+ \tfrac{(c_1-1)(1-c_1\lambda)}{4c_1}\tfrac{x_3}{x_1^2} + \tfrac{c_1-1-c_1\lambda}{4c_1(c_1-1)}\tfrac{x_3}{x_2^2}.
\end{array}
$$
\end{corollary}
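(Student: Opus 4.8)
The plan is to feed the structural constants of Lemma~\ref{ijk} into the classical formula expressing the Ricci curvature of a diagonal invariant metric in terms of such constants. Concretely, for a $\gk$-orthogonal $\Ad(K)$-invariant decomposition $\pg=\pg_1\oplus\dots\oplus\pg_r$ with $d_i=\dim\pg_i$, a metric $g=(x_1,\dots,x_r)_{\gk}$ whose Ricci operator is diagonal with $\Ricci(g)|_{\pg_i}=r_iI_{\pg_i}$ satisfies
\begin{equation*}
r_i=\frac{1}{2x_i}+\frac{1}{4d_i}\sum_{j,k}[ijk]\,\frac{x_i}{x_jx_k}-\frac{1}{2d_i}\sum_{j,k}[ijk]\,\frac{x_j}{x_ix_k}
\end{equation*}
(see e.g. \cite{Bss,WngZll2}; recall $[ijk]$ is fully symmetric). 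I would apply this with $r=3$, $d_1=n_1$, $d_2=n_2$, $d_3=d$, after first checking that its hypotheses hold: under the Assumption, $\cas_{\chi_i}=\kappa_iI_{\pg_i}$ makes $\Ricci(g)|_{\pg_i}$ scalar for $i=1,2$ via Proposition~\ref{ricggals2}(i)--(ii), while on $\pg_3=\pg_3^1\oplus\dots\oplus\pg_3^t$ (or $\pg_3=\pg_3^0$ if $K$ is abelian) all the $r_{3,l}$ of Proposition~\ref{ricggals2}(iv) coincide since $\lambda_1=\dots=\lambda_t=\lambda$; the off-diagonal vanishing $\ricci(g)(\pg_i,\pg_j)=0$ is part of the same package (Proposition~\ref{ricggals2}(iii)).

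The computation itself is then short: for each $i$ only finitely many $[ijk]$ are nonzero, so both sums collapse to a handful of terms. For $i=1$ the surviving contributions come from $[111]$ (at $(j,k)=(1,1)$) and from $[113]=[131]$ (at $(j,k)=(1,3)$ and $(3,1)$), and after the visible cancellations one gets
\begin{equation*}
r_1=\frac{1}{2x_1}-\frac{[111]}{4n_1}\,\frac{1}{x_1}-\frac{[113]}{2n_1}\,\frac{x_3}{x_1^2},
\end{equation*}
and symmetrically for $r_2$; for $i=3$ the survivors are $[333]$, $[311]=[113]$ and $[322]=[223]$. Substituting the values of Lemma~\ref{ijk} and simplifying gives $r_1$ and $r_2$ in the stated form immediately.

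To bring $r_3$ to the asserted closed form -- which is written in terms of $c_1$ and $\lambda$ rather than of $\kappa_1,\kappa_2$ -- one extra ingredient is needed, namely the trace identity
\begin{equation*}
\kappa_in_i=\tr\cas_{\chi_i}=d(1-c_i\lambda),\qquad i=1,2.
\end{equation*}
I would prove this by a one-line trace computation: since $\ad$ is $\kil_{\ggo_i}$-skew, $\cas_{\chi_i}=-c_i\sum_\alpha(\ad_{\pg_i}Z_i^\alpha)^2$ for a $(-\kil_\ggo)$-orthonormal basis $\{Z^\alpha\}$ of $\kg$, whence $\tr\cas_{\chi_i}=-c_i\sum_\alpha\tr_{\pg_i}(\ad Z_i^\alpha)^2=-c_i\sum_\alpha\big(\kil_{\ggo_i}(Z_i^\alpha,Z_i^\alpha)-\kil_{\pi_i(\kg)}(Z_i^\alpha,Z_i^\alpha)\big)$; then $\kil_{\pi_i(\kg)}=a_i\kil_{\ggo_i}|_{\pi_i(\kg)}$ with $a_i=c_i\lambda$ (by the Assumption, using $a_{ij}=\lambda_jc_i$) and $\kil_{\ggo_i}(Z_i^\alpha,Z_i^\alpha)=-1/c_i$ by \eqref{al2} give $\tr\cas_{\chi_i}=d(1-a_i)$. (Alternatively, this identity is already implicit in the computation of $[113]$ and $[223]$ in the proof of Lemma~\ref{ijk}.) Plugging it in converts $[113]/(4d)$ into $\tfrac{(c_1-1)(1-c_1\lambda)}{4c_1}$ and $[223]/(4d)$ into $\tfrac{c_1-1-c_1\lambda}{4c_1(c_1-1)}$, and correspondingly fixes the coefficient of $1/x_3$, yielding exactly the stated $r_3$.

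I do not anticipate a genuine obstacle: this is a direct calculation. The only points that require a bit of care are the bookkeeping of the permutations of the symmetric symbols $[ijk]$ in the two sums (not double-counting $[113]=[131]=[311]$, etc.), getting the numerical constants in the general Ricci formula right, and -- the one genuinely non-mechanical step -- recognizing that the $c_1$-and-$\lambda$ form of $r_3$ is only reached after inserting the trace identity $\kappa_in_i=d(1-c_i\lambda)$.
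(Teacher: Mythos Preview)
Your proposal is correct and follows essentially the same route as the paper: both plug the structural constants of Lemma~\ref{ijk} into the standard Ricci-eigenvalue formula (the paper cites \cite[(18)]{stab-dos}) and simplify. You are in fact a bit more explicit than the paper about the one non-mechanical step, namely the identity $\kappa_in_i=d(1-c_i\lambda)$ needed to rewrite $[113]/d$ and $[223]/d$ in terms of $c_1,\lambda$; the paper uses this silently (it is the content of $\kappa_i=\tfrac{d(1-a_i)}{n_i}$ with $a_i=c_i\lambda$, cf.\ Remark~\ref{cand}).
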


\begin{remark}
These formulas coincide with those provided in Proposition \ref{ricggals2}. 
\end{remark}

\begin{proof}
We use the well-known formula for the Ricci eigenvalues in terms of structural constants (see e.g.\ \cite[(18)]{stab-dos}) to obtain that 
\begin{align*}
r_1 =& \tfrac{1}{2x_1} - \tfrac{1}{4n_1}[111]\tfrac{1}{x_1} - \tfrac{1}{2n_1}[131]\tfrac{x_3}{x_1^2} 
= \left(\tfrac{1}{2} - \tfrac{1-2\kappa_1}{4}\right)\tfrac{1}{x_1} - \tfrac{(c_1-1)\kappa_1}{2c_1}\tfrac{x_3}{x_1^2} \\ 
=& \tfrac{1+2\kappa_1}{4}\tfrac{1}{x_1} - \tfrac{(c_1-1)\kappa_1}{2c_1}\tfrac{x_3}{x_1^2}, \\ 
r_2 =& \tfrac{1}{2x_2} - \tfrac{1}{4n_2}[222]\tfrac{1}{x_2} - \tfrac{1}{2n_2}[232]\tfrac{x_3}{x_2^2} 
= \left(\tfrac{1}{2} - \tfrac{1-2\kappa_2}{4}\right)\tfrac{1}{x_2} - \tfrac{\kappa_2}{2c_1}\tfrac{x_3}{x_2^2} \\
=& \tfrac{1+2\kappa_2}{4}\tfrac{1}{x_2} - \tfrac{\kappa_2}{2c_1}\tfrac{x_3}{x_2^2}, \\ 
r_3 =& \tfrac{1}{2x_3} - \tfrac{1}{4d}[113]\left(\tfrac{2}{x_3}-\tfrac{x_3}{x_1^2}\right) 
- \tfrac{1}{4d}[223]\left(\tfrac{2}{x_3}-\tfrac{x_3}{x_2^2}\right)- \tfrac{1}{4d}[333]\tfrac{1}{x_3} \\ 
=& \left(\tfrac{1}{2} - \tfrac{1}{2d}[113] - \tfrac{1}{2d}[223] - \tfrac{1}{4d}[333]\right)\tfrac{1}{x_3} 
+\tfrac{1}{4d}[113]\tfrac{x_3}{x_1^2} + \tfrac{1}{4d}[223]\tfrac{x_3}{x_2^2}\\ 
=& \left(\tfrac{1}{2} - \tfrac{(c_1-1)(1-c_1\lambda)}{2c_1} - \tfrac{(c_1-1-c_1\lambda)}{2c_1(c_1-1)} - \tfrac{(c_1-2)^2\lambda }{4(c_1-1)}\right)\tfrac{1}{x_3} 
+ \tfrac{(c_1-1)(1-c_1\lambda)}{4c_1}\tfrac{x_3}{x_1^2} + \tfrac{c_1-1-c_1\lambda}{4c_1(c_1-1)}\tfrac{x_3}{x_2^2}\\ 
=& \tfrac{2c_1(c_1-1)-2(c_1-1)^2(1-c_1\lambda)-2(c_1-1-c_1\lambda)-c_1(c_1-2)^2\lambda}{4c_1(c_1-1)} \tfrac{1}{x_3} 
+ \tfrac{(c_1-1)(1-c_1\lambda)}{4c_1}\tfrac{x_3}{x_1^2} + \tfrac{c_1-1-c_1\lambda}{4c_1(c_1-1)}\tfrac{x_3}{x_2^2}, 
\end{align*}
concluding the proof.  
\end{proof}

\section{Einstein metrics}\label{E-sec} 

In this section, we study the existence of Einstein metrics on aligned homogeneous spaces with two factors.  The case when $G_1=G_2$ and $K$ is diagonally embedded, i.e., $M=H\times H/\Delta K$ for some homogeneous space $H/K$, has already been considered in \cite{HHK}.   

\begin{theorem}\label{s2E}
On an aligned homogeneous space $M=G_1\times G_2/K$ with positive constants $c_1,\lambda_1,\dots,\lambda_t$, the metric 
$
g=(x_1,x_2,1)
$
is Einstein if and only if $\cas_{\chi_1}=\kappa_1I_{\pg_1}$ and $\cas_{\chi_2}=\kappa_2I_{\pg_2}$ for some $\kappa_1,\kappa_2>0$ and
\begin{enumerate}[{\rm (i)}] 
\item either $K$ is abelian and $x_1,x_2>0$ solve the following system of equations:
\begin{align}
c_1(2\kappa_1+1)x_1x_2^2=x_1^2+(c_1-1)(2\kappa_1+1)x_2^2, \label{E1} \\ 
c_1(2\kappa_2+1)x_1^2x_2 = (2\kappa_2+1)x_1^2+(c_1-1) x_2^2.  \label{E2}
\end{align} 

\item or $K$ is semisimple, $\lambda_1=\dots=\lambda_t=:\lambda$ and $x_1,x_2>0$ solve the following system of equations:
\begin{align}
&-c_1(2\kappa_2+1)x_1^2x_2+c_1(2\kappa_1+1)x_1x_2^2+2\kappa_2x_1^2-2(c_1-1)\kappa_1x_2^2=0, \label{E3} \\ 
&-c_1^3\lambda x_1^2x_2^2 +c_1(c_1 - 1) (2 \kappa_2 + 1)x_1^2x_2 \label{E4}\\
&+(c_1\lambda-(c_1 - 1) (2 \kappa_2 + 1))x_1^2 - (1-c_1\lambda) (c_1-1)^2 x_2^2=0. \notag
\end{align} 
\end{enumerate}
\end{theorem}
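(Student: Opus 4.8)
The plan is to read the Einstein condition off Proposition~\ref{ricggals2}. Normalise $x_3=1$; by part (iii) the decomposition $\pg=\pg_1\oplus\pg_2\oplus\pg_3^0\oplus\dots\oplus\pg_3^t$ is both $\gk$- and $\ricci(g)$-orthogonal, so $g=(x_1,x_2,1)$ is Einstein precisely when the Ricci operator equals one scalar $\rho$ on each block, i.e.
$$
\tfrac1{2x_1}\left(1-\tfrac{c_1-1}{c_1x_1}\right)\cas_{\chi_1}+\tfrac1{4x_1}I_{\pg_1}=\rho I_{\pg_1},\qquad
\tfrac1{2x_2}\left(1-\tfrac1{c_1x_2}\right)\cas_{\chi_2}+\tfrac1{4x_2}I_{\pg_2}=\rho I_{\pg_2},
$$
and $r_{3,l}=\rho$ for $l=0,\dots,t$, where by part (iv) $r_{3,l}=A\lambda_l+B$ (recall $\lambda_0=0$) with $A=\tfrac{c_1-1}{4}\left(\tfrac{c_1^2}{(c_1-1)^2}-\tfrac1{x_1^2}-\tfrac1{(c_1-1)^2x_2^2}\right)$ and $B=\tfrac{c_1-1}{4}\left(\tfrac1{c_1x_1^2}+\tfrac1{c_1(c_1-1)x_2^2}\right)$ depending only on $c_1,x_1,x_2$. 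We assume $M_1,M_2$ are not points, i.e.\ $\pg_1,\pg_2\ne0$.

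For the forward implication, I would first force $\cas_{\chi_1}=\kappa_1I_{\pg_1}$. The first displayed equation forces either $\cas_{\chi_1}$ scalar or the coefficient $1-\tfrac{c_1-1}{c_1x_1}$ to vanish, i.e.\ $x_1=\tfrac{c_1-1}{c_1}$ and $\rho=\tfrac1{4x_1}$. In the latter case $\tfrac1{x_1^2}=\tfrac{c_1^2}{(c_1-1)^2}$, so $A=-\tfrac1{4(c_1-1)x_2^2}$ and $B=\tfrac{c_1}{4(c_1-1)}+\tfrac1{4c_1x_2^2}$, and the equation $r_{3,l}=\rho$ reads $\tfrac1{4c_1x_2^2}=0$ when $l=0$ (impossible, so $\kg_0=0$) and $\lambda_l=\tfrac1{c_2}$ when $l\geq1$; but $\lambda_l=\tfrac1{c_2}$ means $a_{2l}=\lambda_lc_2=1$, i.e.\ $\pi_2(\kg_l)=\ggo_2$, for every $l$, which since $\ggo_2$ is simple forces $t=1$ and then $\pg_2=0$, a contradiction. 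Hence $\cas_{\chi_1}=\kappa_1I_{\pg_1}$, with $\kappa_1>0$ because $\cas_{\chi_1}\ne0$; symmetrically $\cas_{\chi_2}=\kappa_2I_{\pg_2}$ with $\kappa_2>0$ (the alternative $x_2=\tfrac1{c_1}$ now forcing $a_{1l}=1$, hence $\pg_1=0$).

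Next I would split according to $K$. If $K$ is abelian, $\pg_3=\pg_3^0$ and there is no further block condition: this is case (i). If $K$ is not abelian but has a centre, or is semisimple with the $\lambda_l$ not all equal, then $r_{3,l}=r_{3,l'}$ for two indices with $\lambda_l\ne\lambda_{l'}$ forces $A=0$; then $r_{3,l}=B$ for all $l$, and clearing denominators by $4c_1x_1^2x_2^2$ the conditions $r_1=B$ and $r_2=B$ become precisely \eqref{E1} and \eqref{E2}. So one must rule out that \eqref{E1}, \eqref{E2} and $A=0$ hold simultaneously with $x_1,x_2>0$: this uses the constraints forced by alignment together with $\cas_{\chi_i}$ being scalar (so that $M_i$ is a standard Einstein space and $\kappa_i$ is one of the classified values), and it is the main obstacle of the proof. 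Granting it, $K$ must be semisimple with $\lambda_1=\dots=\lambda_t=:\lambda$, whence $a_{ij}=\lambda c_i=:a_i$ for all $j$ and therefore $\kil_{\pi_i(\kg)}=a_i\kil_{\ggo_i}|_{\pi_i(\kg)}$ with $0<a_i\leq1$; in particular the Assumption of Section~\ref{ijk-sec} holds.

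Finally, under these structural conditions I would insert the Ricci eigenvalues of Corollary~\ref{rics2str} with $x_3=1$. Multiplying by $4c_1x_1^2x_2^2$, the equation $r_1=r_2$ becomes exactly \eqref{E3}, and $r_2=r_3$ becomes exactly \eqref{E4} — the agreement of the $x_1^2x_2^2$-coefficients resting on the identity $4c_1(c_1-1)D=c_1^3\lambda$, where $D$ is the coefficient of $\tfrac1{x_3}$ in $r_3$ — while $r_1=r_3$ is then automatic; in the abelian case the same clearing of denominators turns $r_1=r_3$ and $r_2=r_3$ into \eqref{E1} and \eqref{E2}. All these manipulations are reversible, which yields the converse implication as well. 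Besides the exclusion of the locus $A=0$ mentioned above, the only other work is the routine but lengthy polynomial bookkeeping needed to recognise \eqref{E1}--\eqref{E4} after clearing denominators.
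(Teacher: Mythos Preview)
Your outline matches the paper's proof closely: use Proposition~\ref{ricggals2}, force the Casimir operators to be scalar, split on the structure of $K$ via the equalities among the $r_{3,l}$, and then clear denominators to obtain \eqref{E1}--\eqref{E4}. Your treatment of the degenerate case $x_1=\tfrac{c_1-1}{c_1}$ (where the coefficient of $\cas_{\chi_1}$ vanishes) is in fact more careful than the paper's, which simply asserts that $\cas_{\chi_i}$ must be scalar.

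The genuine gap is your handling of the case $A=0$. You flag it as ``the main obstacle'' and suggest it requires the classification of standard homogeneous Einstein spaces (``$\kappa_i$ is one of the classified values''), then proceed by ``granting it''. The paper disposes of this case by a short direct computation, with no classification involved. From $A=0$ one solves $x_1^2=\tfrac{(c_1-1)^2x_2^2}{c_1^2x_2^2-1}$, which already forces $c_1x_2>1$. Substituting this into $r_2=r_3$ (now $r_3=B$) and clearing denominators yields
\[
(c_1x_2-1)\bigl(c_1x_2-((c_1-1)(2\kappa_2+1)-1)\bigr)=0;
\]
the first factor is excluded, so $c_1x_2=(c_1-1)(2\kappa_2+1)-1$. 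Since $\kappa_2\leq\tfrac12$ (because $[222]=(1-2\kappa_2)n_2\geq0$) and one may arrange $c_1\leq 2$ by relabelling the factors, this gives $c_1x_2\leq 1$, contradicting $c_1x_2>1$. That is the whole argument; no appeal to the tables of standard Einstein spaces is needed, and it is much shorter than your sketch anticipates.
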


\begin{remark}\label{cand}
In order to admit an Einstein metric of this form, an aligned homogeneous space must therefore satisfy that the standard metric on both pieces $G_1/\pi_1(K)$ and $G_2/\pi_2(K)$ is Einstein and if $K$ is semisimple as in part (ii), then $\kil_{\pi_1(\kg)}=c_1\lambda\kil_{\ggo_1}|_{\pi_1(\kg)}$ and $\kil_{\pi_2(\kg)}=c_2\lambda\kil_{\ggo_2}|_{\pi_2(\kg)}$.  This implies that the space can be constructed as in Example \ref{kill-exa}, i.e., $M=G_1\times G_2/\Delta K$, from any two homogeneous spaces $G_1/K$ and $G_2/K$ such that their respective standard metrics are Einstein and $\kil_\kg=a_1\kil_{\ggo_1}|_\kg$ and $\kil_\kg=a_2\kil_{\ggo_2}|_\kg$, which have been listed in \cite[Tables 3-11]{HHK}.  There are $17$ infinite families and $50$ sporadic examples as possibilities for each $G_i/K$.  We assume from now on that $a_1\leq a_2$ (recall that $0<a_1,a_2<1$), which gives
$$
1<c_1=\tfrac{a_1+a_2}{a_2}\leq 2\leq c_2=\tfrac{a_1+a_2}{a_1}, \qquad \lambda=\tfrac{a_1a_2}{a_1+a_2}<\unm, 
\qquad c_1-1=\tfrac{a_1}{a_2}.
$$    
Recall that $\kappa_i=\tfrac{d(1-a_i)}{n_i}$, where $d=\dim{K}$ and $n_i=\dim{G_i}-d$.  
\end{remark}

\begin{remark}\label{hhk}
For $M=H\times H/\Delta K$, i.e., $a_1=a_2$, $\kappa_1=\kappa_2$ and $c_1=2$, it was proved in \cite{HHK} that there is exactly one solution if $K$ is abelian and the existence for $K$ semisimple is equivalent to 
$$
(2\kappa_1+1)^2\geq 8a_1(1-a_1+\kappa_1),
$$ 
which holds for most candidates $H/K$ listed in \cite[Tables 3-11]{HHK}.     
\end{remark}

\begin{remark}\label{dom}
Conditions \eqref{E1} and \eqref{E3} are both equivalent to $r_1=r_2$ (see Corollary \ref{rics2str}).  On the other hand, condition \eqref{E2} is precisely condition \eqref{E4} for $\lambda=0$ and they are equivalent to $r_2=r_3$.    
\end{remark}

\begin{proof}
Assume that $g$ is Einstein.  It follows from Proposition \ref{ricggals2}, (i) and (ii) that $\cas_{\chi_1}=\kappa_1I_{\pg_1}$ and $\cas_{\chi_2}=\kappa_2I_{\pg_2}$ for some $\kappa_1,\kappa_2>0$.  Moreover, we obtain the following formulas for the Ricci eigenvalues $r_1,r_2,r_{3,0},\dots,r_{3,t}$ of $g$ on $\pg_1,\pg_2,\pg_3^0,\dots,\pg_3^t$, respectively: 
\begin{align*}
r_1
&= \tfrac{1}{2x_1} \left(1 - \tfrac{c_1-1}{c_1x_1}\right)\kappa_1 + \tfrac{1}{4x_1}
=\tfrac{ c_1(2 \kappa_1 + 1) x_1  + 2\kappa_1 (1 - c_1)}{4 c_1 x_1^2},\\
r_2
=& \tfrac{1}{2x_2} \left(1 - \tfrac{1}{c_1x_2}\right)\kappa_2 + \tfrac{1}{4x_2} 
= \tfrac{c_1(2 \kappa_2 + 1) x_2  - 2 \kappa_2}{4 c_1 x_2^2},\\
r_{3,l} 
%
=& \tfrac{(c_1-1)\lambda_l}{4}\left(\tfrac{c_1^2}{(c_1-1)^2}-\tfrac{1}{x_1^2} 
-\tfrac{1}{(c_1-1)^2x_2^2}\right) 
+\tfrac{c_1-1}{4}\left(\tfrac{1}{c_1x_1^2} 
+\tfrac{1}{c_1(c_1-1)x_2^2}\right).
\end{align*} 
Thus the factor multiplying $\lambda_l$ in the formula for $r_{3,l}$ vanishes if and only if 
\begin{equation}\label{par}
x_1^2=\tfrac{ (c_1 - 1)^2 x_2^2}{c_1^2 x_2^2 - 1} \quad\mbox{and}\quad c_1x_2>1,
\end{equation} 
and so in that case, equation $r_2=r_3$ is equivalent to 
$$
\tfrac{c_1(2\kappa_2 + 1) x_2 - 2\kappa_2}{4c_1 x_2^2}   = \tfrac{c_1^2 x_2^2 + c_1 - 2}{4(c_1 - 1) c_1 x_2^2}.
$$
This implies that $x_2 = \frac{(2 \kappa_2+1)( c_1 -1) - 1}{c_1}$ and so $c_1x_2\leq 1$, which contradicts \eqref{par}.  We therefore obtain from $r_{3,0}=\dots=r_{3,t}$ that either $K$ is abelian or $K$ is semisimple and $\lambda_1=\dots=\lambda_t$.    

On the other hand, it is straightforward to see that $r_1=r_2$ is equivalent to equation \eqref{E3}, and in the case when $K$ is abelian, we have that
$$
r_2= \tfrac{c_1(2 \kappa_2 + 1) x_2  - 2 \kappa_2}{4 c_1 x_2^2}=\tfrac{(c_1 - 1)x_1^2 + (c_1- 1)^2 x_2^2}{4 c_1 (c_1 - 1) x_1^2x_2^2}=r_{3,0},
$$    
if and only if condition \eqref{E2} holds.  It is easy to see that condition \eqref{E3} is equivalent to \eqref{E1} by using \eqref{E2}.  

It only remains to prove part (ii), that is, equation $r_2=r_3$ is equivalent to condition \eqref{E4}, where $r_3:=r_{3,1}=\dots=r_{3,t}$, which follows from the following manipulations: if we multiply equation $r_2=r_3$, given by, 
$$
\tfrac{c_1(2 \kappa_2 + 1) x_2  - 2 \kappa_2}{4 c_1 x_2^2} 
= \tfrac{(c_1-1)\lambda}{4}\left(\tfrac{c_1^2}{(c_1-1)^2}-\tfrac{1}{x_1^2} 
-\tfrac{1}{(c_1-1)^2x_2^2}\right) 
+\tfrac{c_1-1}{4}\left(\tfrac{1}{c_1x_1^2} 
+\tfrac{1}{c_1(c_1-1)x_2^2}\right),
$$
by the factor $4c_1(c_1-1)x_1^2x_2^2$, we obtain that 
$$
(c_1 - 1) x_1^2(c_1(2 \kappa_2 + 1) x_2  - 2 \kappa_2)
= \lambda\left(c_1^3x_1^2x_2^2- c_1(c_1-1)^2x_2^2 - c_1x_1^2\right) 
+(c_1-1)^2x_2^2 + (c_1-1)x_1^2,
$$
from which \eqref{E4} easily follows, concluding the proof. 
\end{proof}

As known, $G$-invariant Einstein metrics on a compact homogeneous space $M=G/K$ are precisely the critical points of the scalar curvature functional,
$$
\scalar:\mca_1^G\longrightarrow \RR,  
$$ 
where $\mca^G_1$ is the space of all unit volume $G$-invariant metrics on $M$.  An Einstein metric $g\in\mca_1^G$ is called $G$-{\it unstable} if $\scalar''_g(T,T)>0$ for some $T\in{\tca\tca_g^G}$, where $\tca\tca_g^G$ is the space of $G$-invariant TT-tensors (see \cite{stab,stab-dos} for further information).  The stability type of the Einstein metrics that Theorem \ref{s2E} may provide can be obtained following the lines of \cite{stab-dos} (see also \cite[Section 6]{HHK}).  Using the structural constants computed in Lemma \ref{ijk}, we obtain that if 
$$
\mca^{G,diag}:=\{g=(x_1,x_2,x_3):x_i>0\},
$$
then the Hessian of $\scalar:\mca^{G,diag}\rightarrow\RR$ at an Einstein metric $g_0=(x_1,x_2,x_3)$ with Einstein constant $\rho$ is given by $\Hess(\scalar)_{g_0}=2\rho I-L$, where 
$$
L=\tfrac{1}{c_1}\left[\begin{matrix} 
\tfrac{(c_1-1)\kappa_1}{x_1^2} & 0 & -\tfrac{(c_1-1)\kappa_1\sqrt{n_1}}{\sqrt{d}x_1^2}\\ 
0 & \tfrac{\kappa_2}{x_2^2} & -\tfrac{\kappa_2\sqrt{n_2}}{\sqrt{d}x_2^2}\\ 
-\tfrac{(c_1-1)\kappa_1\sqrt{n_1}}{\sqrt{d}x_1^2} & -\tfrac{\kappa_2\sqrt{n_2}}{\sqrt{d}x_2^2} & \tfrac{\kappa_2n_2x_1^2+(c_1-1)\kappa_1n_1x_2^2}{dx_1^2x_2^2}
\end{matrix}\right].
$$

\begin{proposition}\label{stab}
Any Einstein metric on $M=G_1\times G_2/K$ provided by Theorem \ref{s2E} is $G$-unstable.  
\end{proposition}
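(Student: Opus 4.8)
The plan is to translate the instability claim into a statement about the $3\times3$ matrix $2\rho I-L$ above, and then into an elementary inequality which the Einstein equations of Theorem~\ref{s2E} force to hold. Throughout we may assume, after rescaling $g_0$, that $x_3=1$, as in Theorem~\ref{s2E}, and we recall (see \cite{stab,stab-dos}) that a $G$-invariant Einstein metric $g_0$ with Einstein constant $\rho$ — here $\rho>0$, since $M$ is compact homogeneous and not a flat torus — is $G$-unstable exactly when $\scalar''_{g_0}(T,T)>0$ for some $T\in\tca\tca_{g_0}^G$. Because $g_0$ is a multiple of $\gk$ on each of the $\Ad(K)$-irreducible summands $\pg_1,\pg_2,\pg_3^0,\dots,\pg_3^t$, every $g_0$-trace-free element of $\mca^{G,diag}$ is divergence free, hence a TT-tensor — this is the diagonal situation of \cite{stab-dos} (when $\kg$ has a center one checks in addition that $\lca_Xg_0$, for a $G$-invariant vector field $X$, is supported off the blocks $\pg_i\otimes\pg_i$, so it is $L^2$-orthogonal to such tensors). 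Writing $W\subset T_{g_0}\mca^{G,diag}$ for the $g_0$-trace-free plane, it therefore suffices to show that the symmetric bilinear form $2\rho I-L$ is \emph{not} negative semidefinite on $W$.

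Next I would analyze $L$. In the $g_0$-orthonormal basis $e_i:=(\dim\pg_i)^{-1/2}\gk|_{\pg_i}$, $i=1,2,3$ (with $\pg_3$ taken as a whole), a direct check gives $L=\alpha\,u\otimes u+\beta\,v\otimes v$, where $u:=e_1-\sqrt{n_1/d}\,e_3$, $v:=e_2-\sqrt{n_2/d}\,e_3$, $\alpha:=\tfrac{(c_1-1)\kappa_1}{c_1x_1^2}>0$ and $\beta:=\tfrac{\kappa_2}{c_1x_2^2}>0$. Hence $L\geq0$, its kernel is spanned by $\mathbf{m}:=\sqrt{n_1}\,e_1+\sqrt{n_2}\,e_2+\sqrt d\,e_3$, which is precisely the conformal direction $\RR g_0$; and since $\langle u,\mathbf{m}\rangle=\langle v,\mathbf{m}\rangle=0$, we get $W=\mathbf{m}^\perp=\spann\{u,v\}$ and $L|_W>0$. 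Using $\|u\|^2=\tfrac{d+n_1}{d}$, $\|v\|^2=\tfrac{d+n_2}{d}$, $\langle u,v\rangle=\tfrac{\sqrt{n_1n_2}}{d}$ and $n=n_1+n_2+d$ one obtains $\tr(L|_W)=\alpha\|u\|^2+\beta\|v\|^2=:T$ and $\det(L|_W)=\alpha\beta(\|u\|^2\|v\|^2-\langle u,v\rangle^2)=\tfrac{\alpha\beta\,n}{d}=:D$. For a self-adjoint operator on a $2$-plane, $2\rho I-L|_W$ fails to be negative semidefinite precisely when $4\rho>T$ or $4\rho^2-2\rho T+D<0$, so it is enough to establish one of these two inequalities.

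For the final step, I would feed in the Einstein equations. By Corollary~\ref{rics2str}, $r_1=r_2=r_3=\rho$ (equivalently, $(x_1,x_2)$ solves \eqref{E3}--\eqref{E4}, or \eqref{E1}--\eqref{E2} when $K$ is abelian) yields $\alpha=\tfrac{1+2\kappa_1}{2x_1}-2\rho$ and $\beta=\tfrac{1+2\kappa_2}{2x_2}-2\rho$ from $r_1=\rho$ and $r_2=\rho$, while $r_3=\rho$ determines $\rho$ as a rational function of $x_1,x_2$; thus $T$, $D$, $\rho$ become explicit rational functions of $x_1,x_2$ and the parameters $c_1,\kappa_1,\kappa_2,n_1,n_2,d$ (with $\kappa_i=\tfrac{d(1-a_i)}{n_i}$, $c_1=\tfrac{a_1+a_2}{a_2}$, $0<a_1\leq a_2$; for $K$ abelian, $\kappa_1,\kappa_2$ are free Casimir eigenvalues). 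Substituting and clearing denominators, ``$4\rho>T$ or $4\rho^2-2\rho T+D<0$'' becomes a polynomial inequality in $(x_1,x_2)$ to be derived from the two polynomial Einstein equations \eqref{E3}--\eqref{E4} (resp.\ from the unique solution of \eqref{E1}--\eqref{E2}); as with the quartic $p$ in \eqref{pE} elsewhere in the paper, one carries this out with Maple, e.g.\ by eliminating $x_2$ against the Einstein equations via resultants and checking signs along the Einstein locus.

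The soft part — identifying $W$ with the invariant TT-tensors inside $\mca^{G,diag}$, writing $L=\alpha\,u\otimes u+\beta\,v\otimes v$, and recognizing $\ker L$ as the conformal line — is essentially forced; the real obstacle is the inequality in the last step, because the Einstein metrics are the real roots of the unwieldy quartic \eqref{pE} and have no closed form, so it must be shown to be a formal consequence of the Einstein system rather than checked by substitution. I expect that in fact $4\rho^2-2\rho T+D<0$ holds whenever $T\geq4\rho$ — i.e.\ $2\rho$ always lies strictly between the two eigenvalues of $L|_W$, so $g_0$ is a saddle point of $\scalar$ on the unit-volume diagonal metrics (the remaining alternative being a local minimum, which is also $G$-unstable); in particular $g_0$ is never a local maximum, which is the picture reflected in Figure~\ref{fig}.
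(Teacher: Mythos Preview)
Your structural analysis is correct and carefully done: the decomposition $L=\alpha\,u\otimes u+\beta\,v\otimes v$, the identification $\ker L=\RR\mathbf{m}$, $W=\spann\{u,v\}$, and the reduction of ``$(2\rho I-L)|_W$ not negative semidefinite'' to ``$4\rho>T$ or $4\rho^2-2\rho T+D<0$'' are all right. But the proposal is not a proof, because the entire content of the proposition is the inequality in your last step, and you never verify it --- you only say ``one carries this out with Maple, e.g.\ by eliminating $x_2$ via resultants''. That is a genuine gap: nothing guarantees such an elimination produces a sign that can be read off, and you give no indication of what the outcome is.

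The paper avoids this altogether with an elementary shortcut you missed. From the Einstein equation $r_2=\rho$ one has
\[
\rho=\tfrac{c_1(2\kappa_2+1)x_2-2\kappa_2}{4c_1x_2^2},
\]
and in your own notation $L_{22}=\beta=\tfrac{\kappa_2}{c_1x_2^2}$, so
\[
2\rho-L_{22}=\tfrac{c_1(2\kappa_2+1)x_2-4\kappa_2}{2c_1x_2^2}
>\tfrac{(2\kappa_2+1)-4\kappa_2}{2c_1x_2^2}
=\tfrac{1-2\kappa_2}{2c_1x_2^2}\ge 0,
\]
using only the two facts $c_1x_2>1$ (which comes directly from the Einstein equations, see \eqref{c1x2} and \eqref{E5}) and $\kappa_2\le\tfrac12$. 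This positive diagonal entry is what the paper uses to conclude that $2\rho I-L$ restricted to $T_{g_0}\mca^{G,diag}_1$ has a positive eigenvalue, hence $G$-instability. No resultants, no computer algebra --- the whole argument is three lines. Your rank-two description of $L$ actually explains why this works so cleanly: since $\langle u,e_2\rangle=0$ and $\langle v,e_2\rangle=1$, the entry $L_{22}$ is exactly $\beta$, so the paper is really testing the single direction $e_2$ and bounding $\beta$ against $2\rho$.
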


\begin{proof}
It follows from the proof of Theorem \ref{s2E} that $\rho=\tfrac{c_1(2\kappa_2+1)x_2-2\kappa_2}{4c_1x_2^2}$.  Using that $c_1x_2>1$ (see \eqref{c1x2} and \eqref{E5} below) and $\kappa_2\leq \unm$, we obtain that 
$$
2\rho-L_{22} =\tfrac{c_1(2\kappa_2+1)x_2-2\kappa_2}{2c_1x_2^2} - \tfrac{\kappa_2}{c_1x_2^2} 
= \tfrac{c_1(2\kappa_2+1)x_2-4\kappa_2}{2c_1x_2^2} > \tfrac{-2\kappa_2+1}{2c_1x_2^2} \geq 0.  
$$
Thus $2\rho-L|_{T_{g_0}\mca^{G,diag}_1}$ has at least one positive eigenvalue and the instability of these Einstein metrics as critical points of $\scalar:\mca^G_1\rightarrow\RR$ follows.     
\end{proof}

\begin{figure}\label{sec}
\includegraphics[width=47mm]{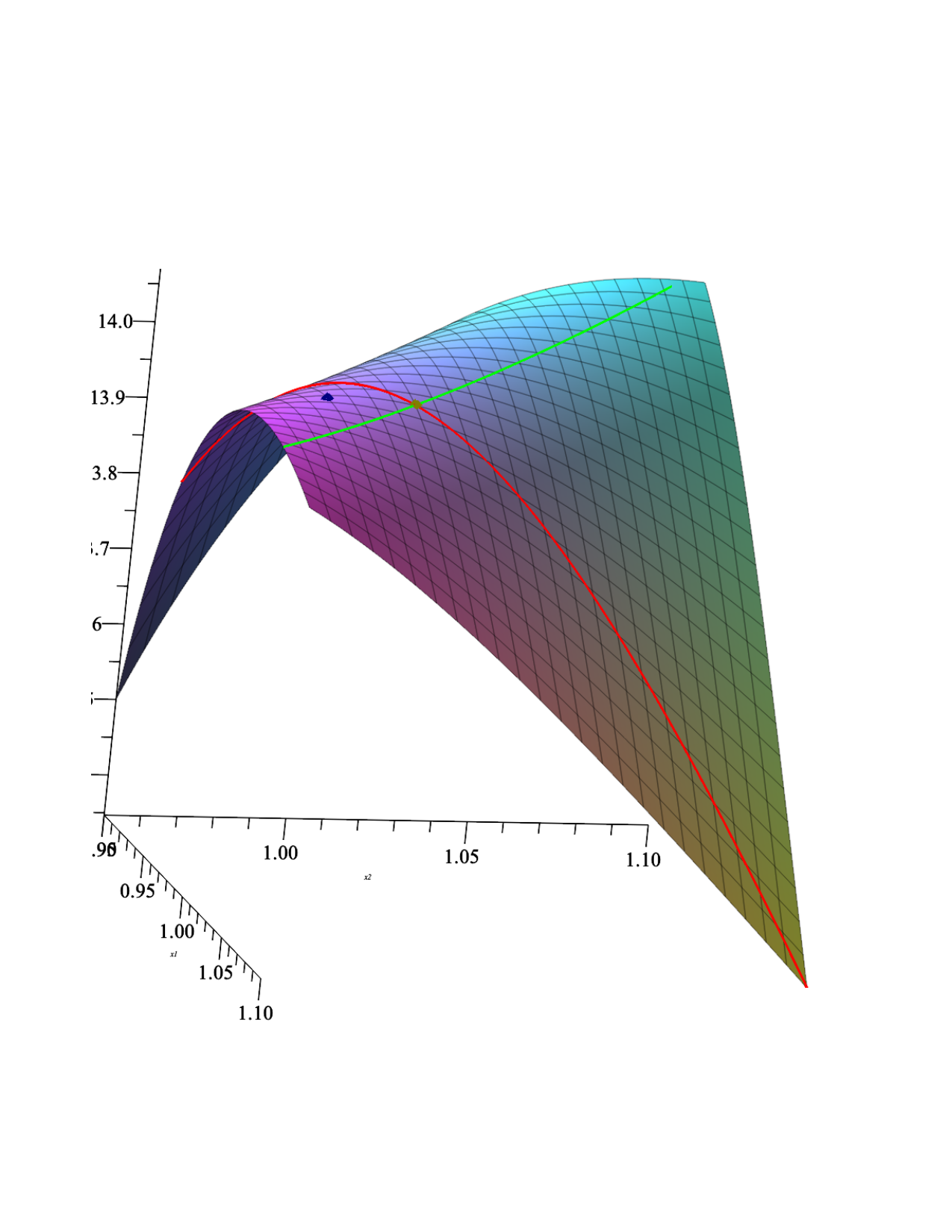}
\includegraphics[width=49mm]{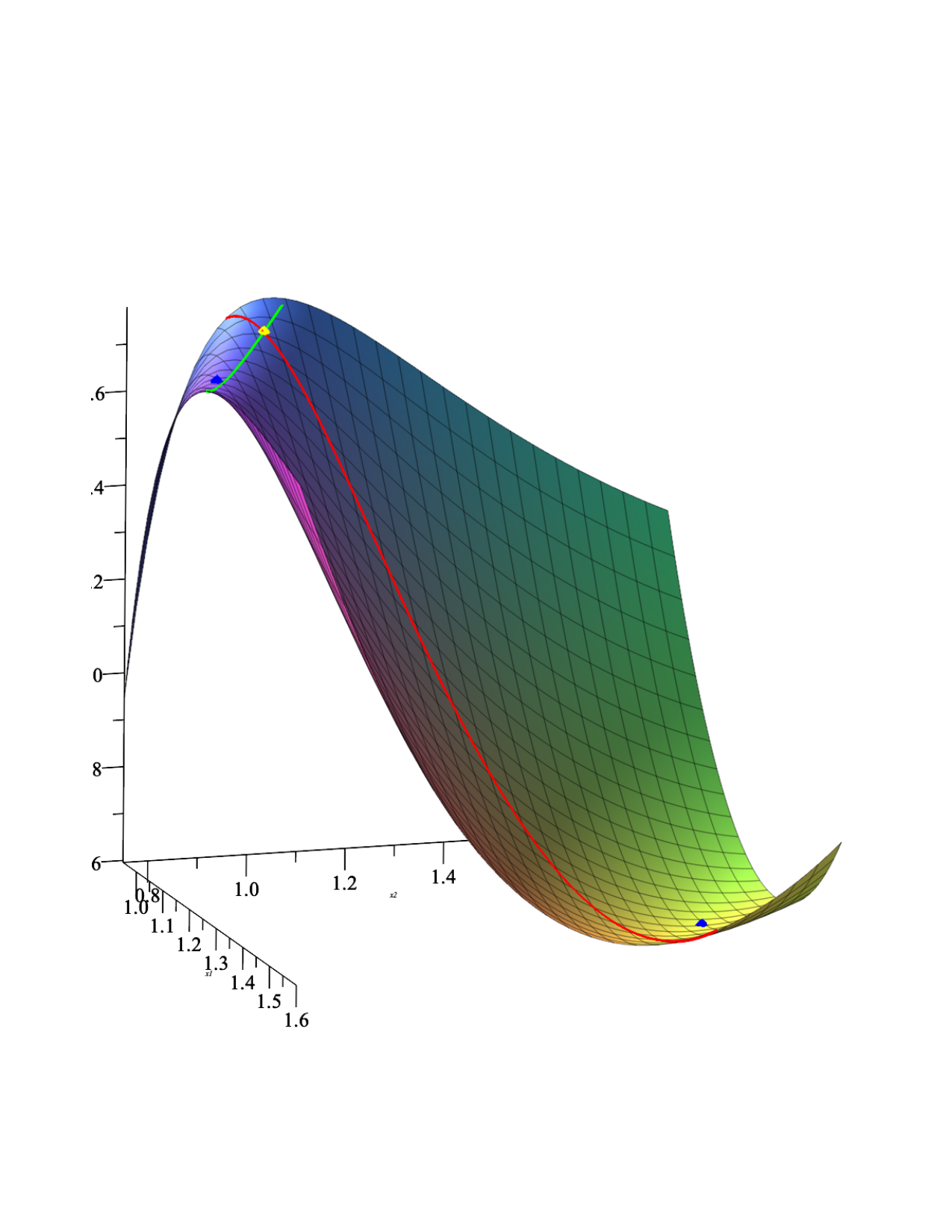}
\includegraphics[width=51mm]{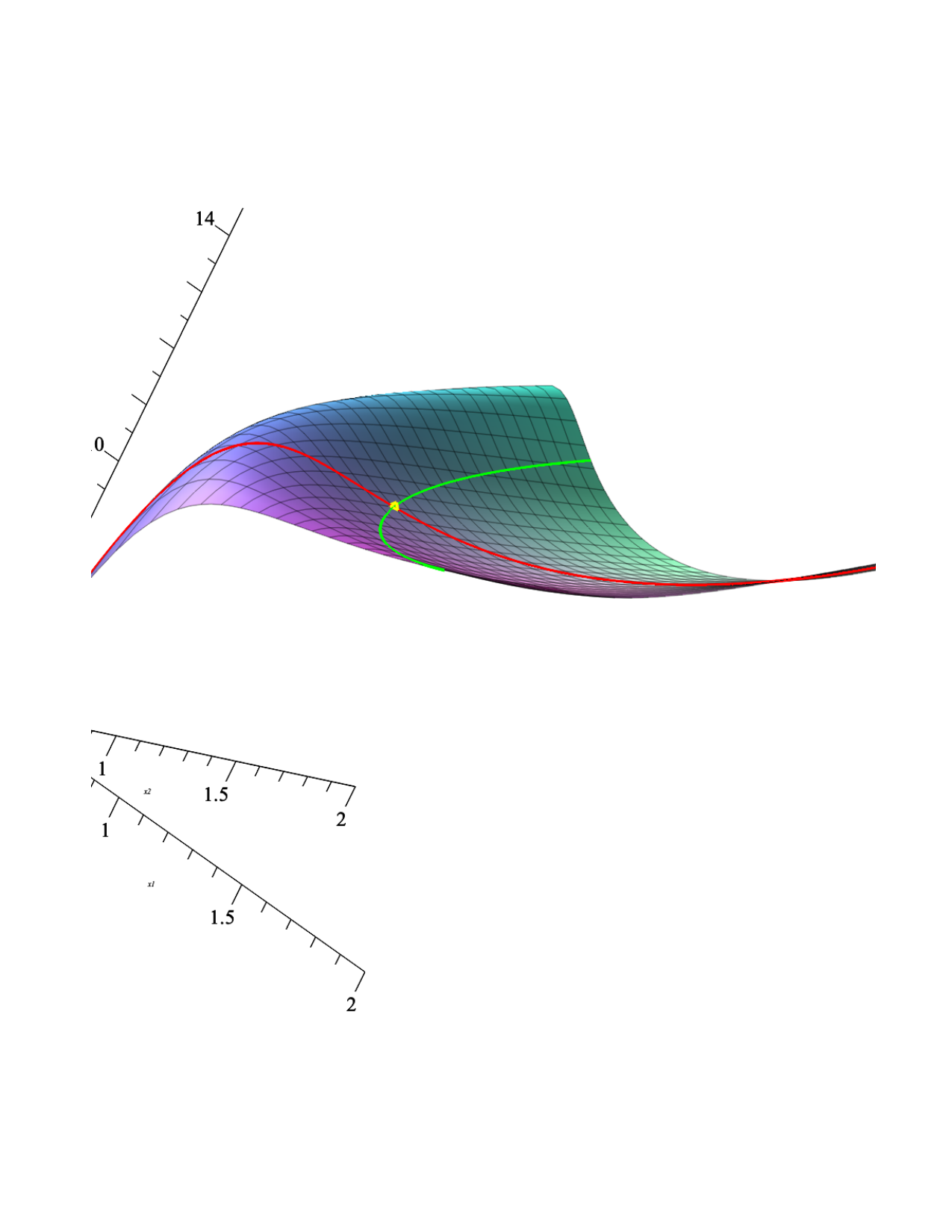}
\caption{Graph of $\scalar:\mca^G_1\rightarrow\RR$ in the variables $(x_1,x_2)$ for, from left to right, $M^{48}=\SU(5)\times\SO(8)/T^4$, $M^{21}=\Gg_2\times\Spe(2)/\SU(2)$ and $M^{29}=\SU(5)\times\SU(4)/\Spe(2)$, which admit one, two and none invariant {\color{blue} Einstein} metrics (i.e., critical points, in blue), respectively.  The {\color{yellow} standard} metric $\gk$ ($x_1=x_2=1$) is in yellow and belongs to both the green curve of {\color{green} normal} metrics and to the red curve defined by {\color{red} $x_1=x_2$}.}\label{fig}
\end{figure}

In Figure \ref{fig}, the graph of $\scalar:\mca^G_1\rightarrow\RR$ has been drawn for three examples.

\subsection{$K$ abelian}\label{Kab-sec} 
We need to analyze the existence problem for positive solutions to the algebraic equations given in Theorem \ref{s2E}, starting in this section with the case when $K$ is abelian.  

\begin{proposition}\label{EKab}
Any aligned homogeneous space $M=G_1\times G_2/K$ such that $K$ is abelian and $\cas_{\chi_1}=\kappa_1I_{\pg_1}$, $\cas_{\chi_2}=\kappa_2I_{\pg_2}$ for some $\kappa_1,\kappa_2>0$, admits exactly one Einstein metric of the form $g=(x_1,x_2,1)$, which is always a saddle point.  
\end{proposition}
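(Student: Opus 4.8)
We sketch the argument. By Theorem~\ref{s2E}(i), under the standing Casimir hypotheses the metric $g=(x_1,x_2,1)$ is Einstein exactly when $(x_1,x_2)$ is a positive solution of the system \eqref{E1}--\eqref{E2}. So the plan is first to prove that this system has a unique positive solution, and then to identify the resulting metric as a saddle point of the scalar curvature functional.

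For the first part, the plan is to reduce the system to a single equation in the ratio $t:=x_1/x_2>0$. Put $A:=2\kappa_1+1$, $B:=2\kappa_2+1$ and $C:=c_1-1$, so that $A,B>1$ and $C>0$; note moreover that $\kappa_1,\kappa_2\le\tfrac12$, since by \eqref{MgBi} one has $\cas_{\chi_i}=\tfrac12\big(I_{\pg_i}+\sum_\alpha(\ad_{\pg_i}e^i_\alpha)^2\big)$ and each $\ad_{\pg_i}e^i_\alpha$ is skew-symmetric with respect to $-\kil_{\ggo_i}$, so the sum is negative semidefinite; hence $A,B\le 2$. Dividing \eqref{E1} and \eqref{E2} by $x_2^2$ expresses $x_2$ in two ways, $x_2=\tfrac{t^2+CA}{c_1At}$ and $x_2=\tfrac{Bt^2+C}{c_1Bt^2}$, and conversely each $t>0$ at which these two expressions agree yields, together with $x_1=tx_2$, a positive solution. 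Equating the two expressions, one finds that positive solutions of \eqref{E1}--\eqref{E2} correspond bijectively to positive roots of $\psi(t)=A$, where
\begin{equation*}
\psi(t):=\frac{t\,(t^2+AC)}{t^2+C/B}, \qquad t>0.
\end{equation*}

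Since $\psi(0^+)=0$ and $\psi(t)\to\infty$ as $t\to\infty$, the crux is that $\psi$ is strictly increasing. A direct computation gives
\begin{equation*}
\psi'(t)=\frac{t^4+C\big(\tfrac3B-A\big)\,t^2+\tfrac{AC^2}{B}}{\big(t^2+C/B\big)^2},
\end{equation*}
whose numerator, viewed as a quadratic in $t^2$ with positive leading coefficient, has discriminant $C^2\big(A-\tfrac1B\big)\big(A-\tfrac9B\big)$; this is negative, because $A>\tfrac1B$ (as $AB>1$) while $A<\tfrac9B$ (as $AB\le 4<9$). Hence $\psi'>0$ on $(0,\infty)$, so $\psi\colon(0,\infty)\to(0,\infty)$ is an increasing bijection and $\psi(t)=A$ has exactly one solution $t_0$. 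This produces the unique positive solution $(x_1,x_2)=(t_0x_2,x_2)$, with $x_2=\tfrac1{c_1}\big(1+\tfrac{C}{Bt_0^2}\big)$, and hence the unique Einstein metric of the form $(x_1,x_2,1)$.

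Finally, since $c_1x_2=1+\tfrac{C}{Bt_0^2}>1$, Proposition~\ref{stab} applies and $g$ is $G$-unstable, so $g$ is not a local maximum of the scalar curvature functional on the unit-volume diagonal metrics $\mca^{G,diag}$. As $g$ is the unique critical point of that functional and the latter is unbounded below (e.g.\ by letting $x_1\to0$), $g$ cannot be a local minimum either, since otherwise a mountain-pass argument would force a second critical point; thus $g$ is a saddle point, as illustrated in the leftmost graph of Figure~\ref{fig}. I expect the only genuinely non-routine step to be the monotonicity of $\psi$, i.e.\ the sign of the quadratic $t^4+C(\tfrac3B-A)t^2+\tfrac{AC^2}{B}$, which is exactly where the bound $\kappa_i\le\tfrac12$ (equivalently $AB<9$) enters; the remaining steps are elementary manipulations of \eqref{E1}--\eqref{E2} together with Proposition~\ref{stab}.
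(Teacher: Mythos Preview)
Your existence-and-uniqueness argument is correct and follows a genuinely different route from the paper's. The paper solves \eqref{E2} for $x_1$ in terms of $x_2$, substitutes into \eqref{E1}, and after the change of variable $u=\sqrt{c_1x_2-1}$ obtains the cubic
\[
q(u)=u^3-\sqrt{(c_1-1)(2\kappa_2+1)}\,u^2+u-\tfrac{\sqrt{c_1-1}}{(2\kappa_1+1)\sqrt{2\kappa_2+1}},
\]
and then shows $q'$ has negative discriminant via $(c_1-1)(2\kappa_2+1)<3$. Your substitution $t=x_1/x_2$ leading to the monotone function $\psi$ is arguably cleaner: the key inequality $(2\kappa_1+1)(2\kappa_2+1)<9$ is symmetric in the two factors and makes no reference to $c_1$. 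Both arguments rest on the same ingredient $\kappa_i\le\tfrac12$, which you correctly extract from \eqref{MgBi}.

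Your saddle-point argument, however, has a genuine gap. The paper does not argue variationally; it computes directly that
\[
2\rho-L_{33}=-\tfrac{(c_1-1)\big((2\kappa_2+1)(c_1x_2-1)+1\big)}{2c_1x_1^2(2\kappa_2+1)(c_1x_2-1)}<0,
\]
using \eqref{E2} in the form $x_1^2=\tfrac{(c_1-1)x_2^2}{(2\kappa_2+1)(c_1x_2-1)}$, which forces the restricted Hessian $2\rho I-L|_{T_{g}\mca_1^{G,diag}}$ to have a negative eigenvalue; combined with the positive eigenvalue from Proposition~\ref{stab} this gives the saddle. Your mountain-pass argument instead requires a Palais--Smale type condition for $\scalar$ on $\mca_1^{G,diag}$, which you do not verify and which is precisely the delicate issue in this subject. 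You also assert without justification that $\scalar\to-\infty$ as $x_1\to 0$; this happens to be true (for abelian $K$ one has $n_i\kappa_i=d$, so the coefficient of $x_3/x_1^2$ in $\scalar$ is $-\tfrac{(c_1-1)d}{4c_1}<0$), but it needs an argument. Even granting both points, your reasoning only rules out $g$ being a \emph{strict} local minimum and does not exclude a degenerate Hessian. The paper's direct computation avoids all of these issues in two lines.
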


\begin{remark}
Alternatively, the existence follows from the Simplicial Complex Theorem in \cite{Bhm}.  Indeed, it is easy to see that the intermediate subalgebras $\kg\oplus\pg_1$ and $\kg\oplus\pg_2$ belong to distinct non-toral components of the graph attached to $G_1\times G_2/K$.  
\end{remark}

\begin{proof}
It follows from \eqref{E2} that necessarily,
\begin{equation}\label{Eu2}
x_1=  \tfrac{\sqrt{c_1 - 1}x2}{ \sqrt{(2\kappa_2+1)(c_1 x_2 -1)}}, 
\end{equation}
from which \eqref{E1} becomes the following identity for $x_2$: 
$$
2\kappa_1(2\kappa_2+1)(c_1x_2-1) - \tfrac{c_1}{\sqrt{c_1-1}} (2\kappa_1+1) x_2\sqrt{ (2\kappa_2+1)(c_1x_2-1)} 
+ c_1(2\kappa_2+1)x_2  - 2\kappa_2=0. 
$$
If we set $u:=\sqrt{c_1x_2-1}$, then it is is easy to see that the above condition is equivalent to the cubic 
\begin{equation}\label{Eu}
q(u):=u^3 - \sqrt{(c_1-1)(2\kappa_2+1)}u^2 + u - \tfrac{ \sqrt{c_1-1}}{(2\kappa_1 + 1)\sqrt{2 \kappa_2 +1}} =0,
\end{equation}
which clearly admits al least one positive solution $u_0$ since $q(0)<0$.  Thus 
\begin{equation}\label{c1x2}
x_2 = \tfrac{u_0^2+1}{c_1}>\tfrac{1}{c_1},
\end{equation}
and so $x_1$ is well defined.  Using that $q'(u)=3u^2 - 2\sqrt{(c_1-1)(2\kappa_2+1)}u + 1$ never vanishes (note that its discriminant is $4((c_1-1)(2\kappa_2+1)-3)<0$), we conclude that $q$ has only one root. 

Concerning the type of critical point this metric is, we argue as in the proof of Proposition \ref{stab}.  Note first that 
\begin{align*} 
2 \rho - L_{3,3}=&\tfrac{c_1 (2 \kappa_2+1) x_2 -2\kappa_2}{2 c_1 x_2^2} - \tfrac{x_1^2 +(c_1-1)x_2^2}{c_1 x_1^2 x_2^2} 
= \tfrac{    ( c_1 (2 \kappa_2+1) x_2 -2\kappa_2) x_1^2 - 2 (x_1^2 +(c_1-1)x_2^2) }{2 c_1  x_1^2 x_2^2}\\ 
=& \tfrac{    ( (2 \kappa_2+1) c_1 x_2 -2\kappa_2-2) x_1^2 - 2 (c_1-1)x_2^2 }{2 c_1  x_1^2 x_2^2}
= \tfrac{    (  (2 \kappa_2+1)( c_1 x_2 -1) -1) x_1^2 - 2 x_2^2 (c_1-1) }{2 c_1  x_1^2 x_2^2}. 
\end{align*} 
Now using \eqref{Eu2} we obtain that 
\begin{align*}
2 \rho - L_{3,3}=&\tfrac{( (2\kappa_2 + 1) (c_1 x_2 - 1) - 1) (c_1 - 1) x_2^2     - 2 x_2^2 (c_1 - 1)(c_1 x_2 - 1) (2\kappa_2 + 1)}{2 c_1 x_1^2 x_2^2 (c_1 x_2 - 1) (2\kappa_2 + 1)} \\ 
=&\tfrac{ (c_1 - 1) x_2^2(  (2\kappa_2 + 1) (c_1 x_2 - 1) - 1  - 2(c_1 x_2 - 1) (2\kappa_2 + 1))}{2 c_1 x_1^2 x_2^2 (c_1 x_2 - 1) (2\kappa_2 + 1)} \\
=&- \tfrac{ (c_1 - 1) x_2^2(  (2\kappa_2 + 1) (c_1 x_2 - 1) + 1)}{2 c_1 x_1^2 x_2^2 (c_1 x_2 - 1) (2\kappa_2 + 1)} < 0.
\end{align*}
This implies that $2\rho-L|_{T_{g_0}\mca^{G,diag}_1}$ has at least one negative eigenvalue, which combined with Proposition \ref{stab} gives that the Einstein metric is a saddle point of $\scalar:\mca^G_1\rightarrow\RR$, as was to be shown.    
\end{proof}

The class involved in the above corollary is not that large, it can be obtained from \cite[Table 8]{HHK} and consists of 
\begin{enumerate}[\small{$\bullet$}]
\item $\SU(m+1)\times\SO(2m)/T^m$, \quad $m\geq 4$, 

\item $\SU(2)\times\SU(2)/T^1$, \quad $\SU(6)\times \Eg_6/T^6$, \quad $\SU(7)\times \Eg_7/T^7$, \quad $\SU(8)\times \Eg_8/T^8$, 

\item $\SO(12)\times \Eg_6/T^6$, \quad $\SO(14)\times \Eg_7/T^7$, \quad $\SO(16)\times \Eg_8/T^8$.  
\end{enumerate}
Each one is actually an infinite family of homogeneous spaces since the torus can be embedded in $G_1\times G_2$ with any slope $(p,q)$, $p,q\in\NN$, which gives $c_1=\tfrac{p^2+q^2}{p^2}$ in much the same way as in Example \ref{dim5}.  

\begin{example}\label{48} 
Consider the space $M^{48}=\SU(5)\times\SO(8)/T^4$ with $c_1=2$ (i.e., $p=q$), which has $n_1 = 11$, $n_2 = 7$, $d = 4$, $\kappa_1=\frac{1}{5}$, $\kappa_2=\frac{1}{6}$.  The cubic in \eqref{Eu} is given by 
$$
q(u)= u^3 - \tfrac{2}{\sqrt{3}} u^2 + u  - \tfrac{5}{14\sqrt{3}}, 
$$
and has discriminant $\Delta(q)=-\tfrac{2323}{588}< 0$.  Thus there is exactly one real root, which is given by 
$$
u_0 = \tfrac{c}{126} - \tfrac{70}{3c} +\tfrac{2}{3 \sqrt{3}} \approx 0.8405, \qquad c=(200802 \sqrt{3} + 7938\sqrt{2323} )^{\tfrac{1}{3}},
$$ 
and so $g\approx (0.8791,0.8532,1)$ (see Figure \ref{fig}). 
\end{example}

\subsection{$K$ semisimple}\label{Kss-sec}
In this section, we consider the case of an aligned homogeneous space $M=G_1\times G_2/K$ with $K$ semisimple such that $\cas_{\chi_1}=\kappa_1I_{\pg_1}$ and $\cas_{\chi_2}=\kappa_2I_{\pg_2}$ for some $\kappa_1,\kappa_2>0$.  According to Theorem \ref{s2E} and Remark \ref{cand}, if the Killing constants are $a_1,a_2$ (i.e., $c_1=\tfrac{a_1+a_2}{a_2}$, $\lambda=\tfrac{a_1a_2}{a_1+a_2}$, $\kappa_i=\tfrac{d(1-a_i)}{n_i}$), then the  Einstein equations for the metric $g=(x_1,x_2,1)_{\gk}$ can be written as
\begin{align}
Ax_1^2x_2+Bx_1x_2^2+Cx_1^2+Dx_2^2=&0, \label{E6} \\ 
Ex_1^2x_2^2 +Fx_1^2x_2 +Gx_1^2 + Hx_2^2=&0, \label{E7}
\end{align} 
where
$$
A:=-c_1(2\kappa_2+1)<0, \quad B:=c_1(2\kappa_1+1)>0, \quad C:=2\kappa_2>0, \quad D:=-2(c_1-1)\kappa_1<0, 
$$
$$
E:=-c_1^3\lambda<0, \qquad F:=c_1(c_1 - 1) (2 \kappa_2 + 1)>0, 
$$
$$
G:=c_1\lambda-(c_1 - 1) (2 \kappa_2 + 1)<0, 
\qquad H:= - (1-c_1\lambda) (c_1-1)^2<0.
$$
Note that $G<0$ by condition \eqref{E5} below.   

\begin{proposition}\label{KssE}
A metric $g=(x_1,x_2,1)_{\gk}$ on $M=G_1\times G_2/K$ with $K$ semisimple such that $\cas_{\chi_i}=\kappa_iI_{\pg_i}$, $i=1,2$, is Einstein if and only if $x_2$ is a root of the quartic polynomial 
\begin{equation}\label{pE}
p(x)= ax^4 +bx^3 + cx^2 + d x +e, 
\end{equation}
and $x_1^2=\tfrac{-Hx_2^2}{Ex_2^2 +Fx_2 +G}$, where 
$$
a:=D^2E^2+B^2EH>0, \quad b:=B^2FH - 2DE(AH-DF)<0, 
$$
$$
c:=(AH-DF)^2+2DE(DG-CH)+B^2GH>0, 
$$
$$
d:=-2(AH-DF)(DG-CH)<0,  
\qquad
e:= (DG-CH)^2>0.
$$
In that case, 
\begin{equation}\label{E5}
\tfrac{1}{c_1}<x_2<\tfrac{(c_1-1)(2\kappa_2+1)-c_1\lambda}{c_1^2\lambda} 
=\tfrac{c_1G}{E}.   
\end{equation}
\end{proposition}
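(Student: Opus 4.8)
The plan is to start from the two Einstein equations \eqref{E6}--\eqref{E7} rewritten in the form $Ax_1^2x_2+Bx_1x_2^2+Cx_1^2+Dx_2^2=0$ and $Ex_1^2x_2^2+Fx_1^2x_2+Gx_1^2+Hx_2^2=0$, and to eliminate $x_1$ between them. The second equation is linear in $x_1^2$, so I would solve it as $x_1^2=\tfrac{-Hx_2^2}{Ex_2^2+Fx_2+G}$ (which is already the formula asserted in the statement, valid provided the denominator is nonzero; since $H<0$ and $x_1^2>0$ we will automatically need $Ex_2^2+Fx_2+G>0$, which is exactly the information encoded in \eqref{E5}). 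Substituting this into \eqref{E6} is the natural move, but \eqref{E6} contains an odd power of $x_1$, namely the term $Bx_1x_2^2$, so one first isolates that term: $Bx_1x_2^2=-(Ax_1^2x_2+Cx_1^2+Dx_2^2)=-(x_2(Ax_1^2+D x_2)+Cx_1^2)$. I would then square both sides to get $B^2x_1^2x_2^4=(Ax_1^2x_2+Cx_1^2+Dx_2^2)^2$, and now every occurrence of $x_1$ is an even power, so I can substitute $x_1^2=\tfrac{-Hx_2^2}{Ex_2^2+Fx_2+G}$ throughout.

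After that substitution, clearing the denominator $(Ex_2^2+Fx_2+G)^2$ and dividing by the common factor $x_2^2$ (which is legitimate since $x_2>0$), one is left with a polynomial identity in $x_2$. The key bookkeeping step is to check that this polynomial has degree exactly $4$: the left side $B^2x_1^2x_2^4$ contributes $B^2(-H x_2^2)(Ex_2^2+Fx_2+G)x_2^4/x_2^2$, a degree-$6$ term before division and degree... one must be careful, so let me instead organize the computation by writing $x_1^2 = -Hx_2^2/\Phi$ with $\Phi:=Ex_2^2+Fx_2+G$, plugging into $B^2x_1^2x_2^4\Phi^2 = \Phi^2(Ax_1^2x_2+Cx_1^2+Dx_2^2)^2$, i.e. $-B^2Hx_2^6\Phi = \big(x_2(-AHx_2^2+D x_2\Phi)-CHx_2^2\big)^2 = x_2^4\big(-AHx_2+D\Phi-CH\big)^2\cdot$—here the right-hand bracket $-AHx_2^2 + D x_2 \Phi - CHx_2^2$ is degree $3$ in $x_2$, so its square is degree $6$, matching the left side; after dividing by $x_2^4$ one obtains $-B^2Hx_2^2\Phi = \big(D x_2\Phi + (-AH-CH... )\big)$—I would carefully expand $D\Phi x_2 = DEx_2^3+DFx_2^2+DGx_2$ and collect the cubic $DEx_2^3+(DF-AH)x_2^2+(DG-CH)x_2$, whose square is a sextic in $x_2$; dividing the whole identity by $x_2^2$ yields a quartic. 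Matching coefficients then gives precisely $a=D^2E^2+B^2EH$, $b=B^2FH-2DE(AH-DF)$ (note $DF-AH=-(AH-DF)$, which is where the sign in $b$ comes from), $c=(AH-DF)^2+2DE(DG-CH)+B^2GH$, $d=-2(AH-DF)(DG-CH)$, $e=(DG-CH)^2$. So the reduction to the quartic is essentially a disciplined expansion, and the converse direction — that a positive root $x_2$ of $p$ together with the stated $x_1^2$ gives an Einstein metric — follows by retracing the same steps, with the one subtlety that squaring was not reversible, so one must separately verify the sign of $x_1$, i.e. that $Bx_1x_2^2$ has the correct sign as a square root; this is handled by noting $B>0$ and choosing the positive square root $x_1=\sqrt{x_1^2}$, which is consistent because the isolated quantity $-(Ax_1^2x_2+Cx_1^2+Dx_2^2)$ must then be checked to be positive on the relevant range — and that is where \eqref{E5} again enters.

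The remaining, and in my view genuinely delicate, part is establishing the bound \eqref{E5}, namely $\tfrac{1}{c_1}<x_2<\tfrac{c_1G}{E}$. The upper bound $x_2<c_1G/E$: since $E<0$, we have $c_1G/E>0$ iff $G<0$, and the claim $x_2<c_1G/E$ is equivalent, after multiplying by $E<0$ (flipping the inequality) and using $\Phi(x_2)>0$ which must hold because $x_1^2=-Hx_2^2/\Phi>0$ and $-H>0$, to an inequality that I would extract from $\Phi(x_2)>0$ together with the explicit values $E=-c_1^3\lambda$, $F=c_1(c_1-1)(2\kappa_2+1)$, $G=c_1\lambda-(c_1-1)(2\kappa_2+1)$: concretely $\Phi(x_2)=-c_1^3\lambda x_2^2+c_1(c_1-1)(2\kappa_2+1)x_2+c_1\lambda-(c_1-1)(2\kappa_2+1)$, a downward parabola in $x_2$ whose larger root I expect to be exactly $x_2=\tfrac{(c_1-1)(2\kappa_2+1)-c_1\lambda}{c_1^2\lambda}=c_1G/E$ (one checks $\Phi=-c_1^3\lambda(x_2-\tfrac1{c_1})(x_2-c_1G/E)$ by comparing constant terms, using $\lambda<\tfrac12$ and $c_1>1$ so that $c_1G/E>\tfrac1{c_1}$), and then $\Phi(x_2)>0$ forces $x_2$ strictly between the two roots, giving both $x_2>\tfrac1{c_1}$ and $x_2<c_1G/E$ simultaneously. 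Thus the hard part is really just the clean factorization of $\Phi$ and verifying $c_1G/E>1/c_1$, i.e. that $G<0$, which the paper flags as "condition \eqref{E5} below" and which amounts to $(c_1-1)(2\kappa_2+1)>c_1\lambda$; this I would verify from $\kappa_2>0$, $c_1>1$, and $\lambda<\tfrac12$ — indeed $(c_1-1)(2\kappa_2+1)>c_1-1\ge c_1\lambda$ would need $c_1-1\ge c_1\lambda$, i.e. $\lambda\le 1-\tfrac1{c_1}=\tfrac{a_1}{a_1+a_2}$, and since $\lambda=\tfrac{a_1a_2}{a_1+a_2}<\tfrac{a_1}{a_1+a_2}$ as $a_2<1$, this holds. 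Once $\Phi$ is factored, \eqref{E5} drops out, and the proof is complete.
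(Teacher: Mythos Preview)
Your elimination strategy is exactly the paper's: solve \eqref{E7} for $x_1^2$, isolate the odd-power term $Bx_1x_2^2$ in \eqref{E6}, square, substitute, and expand to the quartic $p$. Your factorization $\Phi(x)=E\bigl(x-\tfrac{1}{c_1}\bigr)\bigl(x-\tfrac{c_1G}{E}\bigr)$ is correct (the paper writes it as $q(x)=(c_1x-1)\bigl((c_1-1)(2\kappa_2+1)-c_1\lambda(c_1x+1)\bigr)$, same roots), and you are right that $\Phi(x_2)>0$ places $x_2$ strictly between them; you are also more careful than the paper in flagging the sign issue when undoing the squaring in the converse direction.

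There is, however, a genuine gap in your verification that $\tfrac{1}{c_1}<\tfrac{c_1G}{E}$. You assert this is ``i.e.\ that $G<0$,'' but that is not so: $G<0$ only says $\tfrac{c_1G}{E}>0$. The ordering $\tfrac{c_1G}{E}>\tfrac{1}{c_1}$ is equivalent (multiply through by $c_1^2\lambda>0$) to $(c_1-1)(2\kappa_2+1)>2c_1\lambda$, strictly stronger than $G<0$, which is $(c_1-1)(2\kappa_2+1)>c_1\lambda$. Your elementary chain $(c_1-1)(2\kappa_2+1)>c_1-1>c_1\lambda$ proves only the weaker inequality. Using $c_1\lambda=a_1$ and $c_1-1=a_1/a_2$, the needed bound reduces to $2\kappa_2+1>2a_2$, i.e.\ $a_2<\tfrac{2d+n_2}{2(d+n_2)}$; this is not automatic from $a_2<1$ and $\kappa_2>0$ (take $a_2$ close to $1$, so $\kappa_2=\tfrac{d(1-a_2)}{n_2}\to 0$). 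The paper handles this by invoking \cite[Theorem~1]{DtrZll}, a structural bound on Killing constants, and notes that $G<0$ is a \emph{consequence} of \eqref{E5}, not equivalent to it. Once you plug in that external input, your argument goes through.
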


\begin{proof}
We consider the quadratic polynomial 
$$
q(x):=Ex^2 +Fx +G=(c_1x-1)((c_1-1)(2\kappa_2+1)-c_1\lambda(c_1x+1)).  
$$
It follows from Remark \ref{cand} that its two roots satisfy
\begin{equation}\label{pE3}
\tfrac{1}{c_1}<\tfrac{(c_1-1)(2\kappa_2+1)-c_1\lambda}{c_1^2\lambda} \quad\mbox{if and only if}\quad 
a_2<\tfrac{2d+n_2}{2d+2n_2}, 
\end{equation}
which always hold by \cite[Theorem 1]{DtrZll}.  Thus condition \eqref{E5} follows from the fact that $q(x_2)>0$ by \eqref{E7}.  

If $g$ is Einstein, then by \eqref{E7}, $q(x_2)>0$ and 
$
x_1^2=\tfrac{-Hx_2^2}{q(x_2)}. 
$
It now follows from \eqref{E6} that 
\begin{align*}
x_1=&\tfrac{1}{Bx_2^2} (-Ax_1^2x_2-Cx_1^2-Dx_2^2) 
=\tfrac{1}{Bx_2^2} (-x_1^2(Ax_2+C)-Dx_2^2) \\ 
=&\tfrac{1}{Bx_2^2} \left(\tfrac{Hx_2^2}{q(x_2)}(Ax_2+C)-Dx_2^2\right) 
=\tfrac{H(Ax_2+C)-Dq(x_2)}{Bq(x_2)},
\end{align*}
which implies that 
\begin{align*}
\tfrac{-Hx_2^2}{q(x_2)} 
= \left(\tfrac{H(Ax_2+C)-D(Ex_2^2 +Fx_2 +G)}{Bq(x_2)}\right)^2.
\end{align*}
This is equivalent to
\begin{align}
&-B^2Hx_2^2(Ex_2^2 +Fx_2 +G) = \left(H(Ax_2+C)-D(Ex_2^2 +Fx_2 +G)\right)^2 \label{pE2}\\ 
=& H^2(Ax_2+C)^2 + D^2(Ex_2^2 +Fx_2 +G)^2 - 2DH(Ax_2+C)(Ex_2^2 +Fx_2 +G) \notag\\ 
=& H^2\left(A^2x_2^2+C^2+2ACx_2\right) + D^2\left(E^2x_2^4 +F^2x_2^2 +G^2+2EFx_2^3+2EGx_2^2+2FGx_2\right)\notag \\ 
& - 2DH\left(AEx_2^3+(AF+CE)x_2^2+(AG+CF)x_2+CG\right),\notag
\end{align}
which is easily checked to be precisely $p(x_2)=0$.  

Conversely, we assume that $p(x_2)=0$ for some $x_2\in\RR$ (in particular, $x_2\ne 0$).  It follows from \eqref{pE2} that $q(x_2)\geq 0$, where equality holds if and only if $x_2=-\tfrac{C}{A}=\tfrac{2\kappa_2}{c_1(2\kappa_2+1)}<\tfrac{1}{c_1}$, a contradiction by \eqref{pE3}.  Thus $q(x_2)>0$ and if we set $x_1^2=\tfrac{-Hx_2^2}{q(x_2)}$, then \eqref{E6} and \eqref{E7} hold and hence $g$ is Einstein, concluding the proof.  
\end{proof}

According to Proposition \ref{KssE}, Einstein metrics of the form $g=(x_1,x_2,1)_{\gk}$ are in one-to-one correspondence with the real roots of the quartic polynomial $p$ given in \eqref{pE}, which can be analyzed by considering its discriminant 
\begin{align*}
\Delta ={}&256a^{3}e^{3}-192a^{2}bde^{2}-128a^{2}c^{2}e^{2}+144a^{2}cd^{2}e-27a^{2}d^{4}\\
&+144ab^{2}ce^{2}-6ab^{2}d^{2}e-80abc^{2}de+18abcd^{3}+16ac^{4}e\\
&-4ac^{3}d^{2}-27b^{4}e^{2}+18b^{3}cde-4b^{3}d^{3}-4b^{2}c^{3}e+b^{2}c^{2}d^{2},
\end{align*}
and other three invariants given by, 
$$
R:= 64a^{3}e-16a^{2}c^{2}+16ab^{2}c-16a^{2}bd-3b^{4},\quad S:=8ac-3b^2,  \quad T:= b^3+8a^2d-abc.
$$
The following results on the nature of the roots of $p$ are well known (see \cite{Lzr, Rs}):
\begin{enumerate}[(i)]
\item $\Delta<0$: two different real roots and two non-real complex roots.  

\item $\Delta>0$:
\begin{enumerate}[a)]
\item $R<0$ and $S<0$: four different real roots. 

\item $R\geq 0$ or $S\geq 0$: no real roots.  
\end{enumerate}

\item $\Delta=0$:
\begin{enumerate}[a)]
\item $S\leq 0$ or $T\ne 0$: at least one real root. 

\item $S>0$ and $T= 0$: no real roots.  
\end{enumerate}
\end{enumerate}

\begin{figure}\label{sec}
\includegraphics[width=47mm]{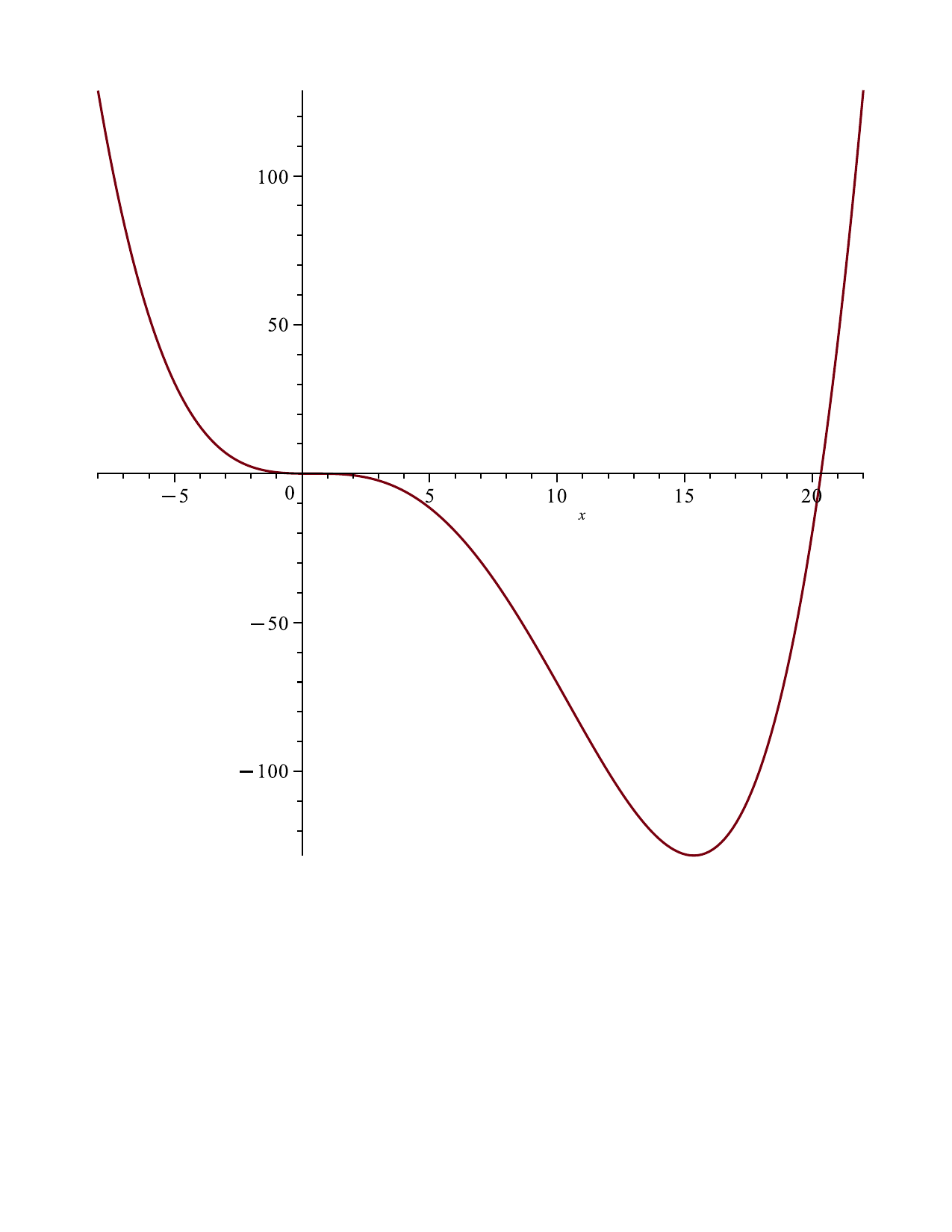}
\includegraphics[width=47mm]{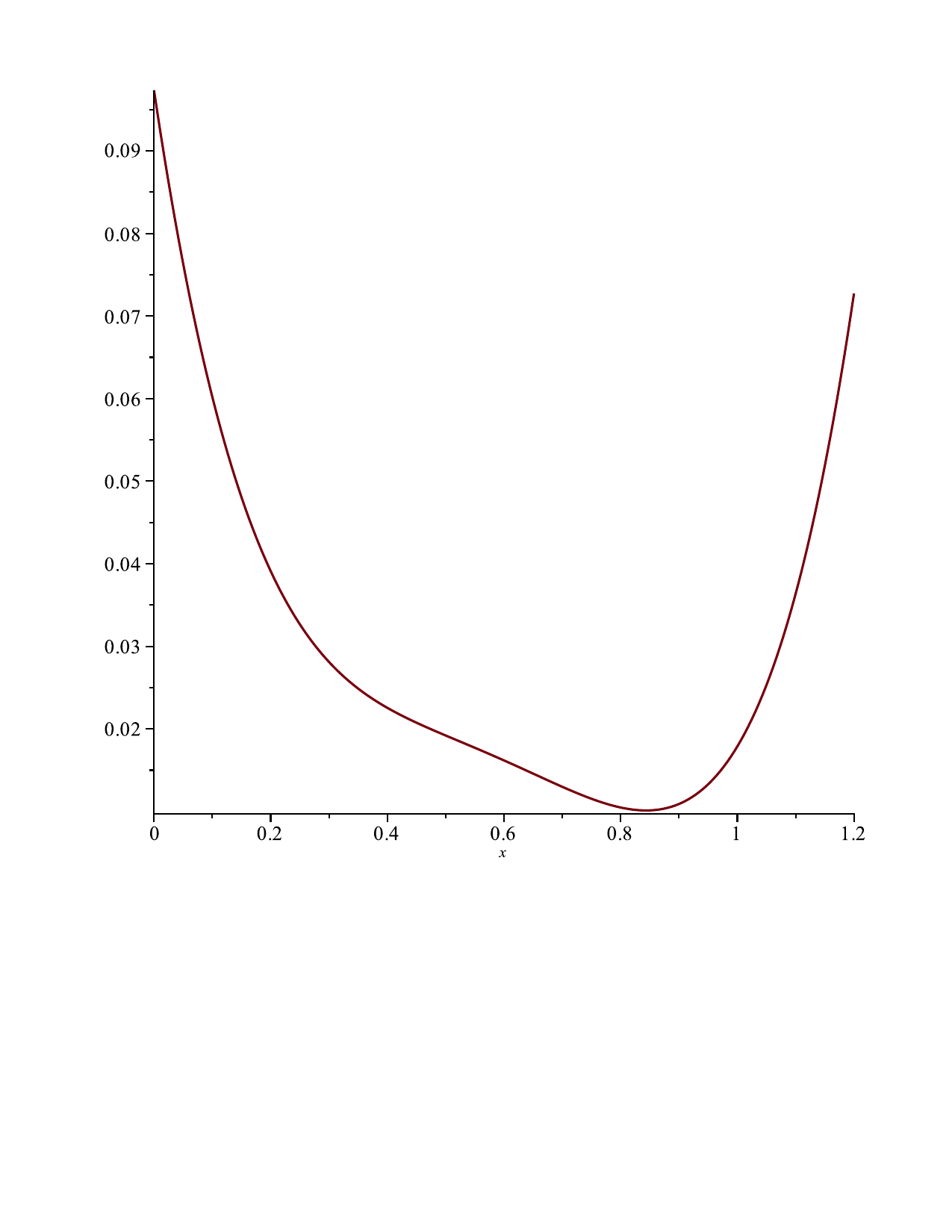}
\caption{Graph of the quartic polynomial $p$ whose roots are in bijection with invariant Einstein metrics on $M^{21}=\Gg_2\times\Spe(2)/\SU(2)$ (left) and $M^{29}=\SU(5)\times\SU(4)/\Spe(2)$ (right), which admit two and none, respectively.}\label{fig2}
\end{figure}

\begin{table} {\tiny
$$
\begin{array}{c|c|c}
K & d & G_i
\\[2mm] \hline \hline \rule{0pt}{14pt}
\SU(2) & 3 & \SU(3)_{5,\tfrac{1}{6}}, \quad \Spe(2)_{7,\tfrac{1}{15}}, \quad {\Gg_2}_{11,\tfrac{1}{56}}
\\[2mm] \hline  \rule{0pt}{14pt}
\SU(3) & 8 & {\Gg_2}_{6,\tfrac{3}{4}}, \quad \SO({\bf 8})_{20,\tfrac{1}{6}}, \quad \SU(6)_{27,\tfrac{1}{10}}, 
\\
&&{\Eg_6}_{70,\tfrac{1}{36}}, \quad {\Eg_7}_{125,\tfrac{1}{126}}
\\[2mm] \hline  \rule{0pt}{14pt}
\Gg_2 & 14 & {\SO(7)}_{7,\tfrac{4}{5}}, \quad {\Eg_6}_{64,\tfrac{1}{9}}, \quad \SO({\bf 14})_{77,\tfrac{1}{12}}
\\[2mm] \hline  \rule{0pt}{14pt}
\Spe(3) & 21 & \SO(14)_{70,\tfrac{13}{18}}, \quad \SU(6)_{14,\tfrac{2}{3}},  \quad \underline{\Spe(7)}_{84,\tfrac{1}{10}}, \quad \SO({\bf 21})_{189,\tfrac{1}{19}}
\\[2mm] \hline  \rule{0pt}{14pt}
\SU(6) & 35 & \SU(15)_{189,\tfrac{1}{10}}, \quad \underline{\Spe(10)}_{175,\tfrac{1}{11}},  \quad \SU(21)_{405,\tfrac{1}{28}}, \quad  \SO({\bf 35})_{560,\tfrac{1}{33}}
\\[2mm] \hline  \rule{0pt}{14pt}
\SO(9) & 36 & \SO(10)_{9,\tfrac{7}{8}}, \quad \underline{\Fg_4}_{16,\tfrac{7}{9}}, \quad \SU(9)_{44,\tfrac{7}{18}}, \quad \underline{\SO(16)}_{84,\tfrac{1}{4}}, 
\\
&& \SO({\bf 36})_{594,\tfrac{1}{34}}, \quad \SO(44)_{910,\tfrac{1}{66}}
\\[2mm] \hline  \rule{0pt}{14pt}
\Spe(4) & 36 & \SU(8)_{27,\tfrac{5}{8}}, \quad \SO(27)_{315,\tfrac{23}{30}}, \quad \underline{\Eg_6}_{42,\tfrac{5}{12}}, 
\\
&& \SO({\bf 36})_{594,\tfrac{1}{34}}, \quad \underline{\SO(42)}_{825,\tfrac{1}{56}}
\\[2mm] \hline  \rule{0pt}{14pt} 
\SO(10) & 45 & \SO(11)_{10,\tfrac{8}{9}}, \quad \SU(10)_{54,\tfrac{2}{5}}, \quad \underline{\SU(16)}_{210,\tfrac{1}{8}}, 
\\
&& \SO({\bf 45})_{945,\tfrac{1}{43}},\quad  \SO(54)_{1386,\tfrac{1}{78}}, 
\\[2mm] \hline  \rule{0pt}{14pt}
\Fg_4 & 52 & {\Eg_6}_{26,\tfrac{3}{4}}, \quad {\SO(26)}_{273,\tfrac{1}{8}}, \quad \SO({\bf 52})_{1274,\tfrac{1}{50}}
\\[2mm] \hline  \rule{0pt}{14pt}
\SU(8)  & 63 & \underline{\Eg_7}_{70,\tfrac{4}{9}}, \quad \SU(28)_{720,\tfrac{1}{21}}, \quad \SU(36)_{1232,\tfrac{1}{45}}, 
\\
&& \SO({\bf 63})_{1890,\tfrac{1}{61}}, \quad \underline{\SO(70)}_{2352,\tfrac{1}{85}}
\\[2mm] \hline  \rule{0pt}{14pt}
\SO(12) & 66 & \SO(13)_{12,\tfrac{10}{11}}, \quad \SU(12)_{77,\tfrac{5}{12}},  \quad \underline{\Spe(16)}_{462,\tfrac{5}{68}}, \\
&& \SO({\bf 66})_{2079,\tfrac{1}{64}}, \quad  \SO(77)_{2860,\tfrac{1}{105}}
\\[2mm] \hline  \rule{0pt}{14pt}
\Eg_6 & 78 & {\SU(27)}_{650,\tfrac{2}{27}}, \quad \SO({\bf 78})_{2925,\tfrac{1}{76}}
\\[2mm] \hline  \rule{0pt}{14pt}
\SU(9) & 80 & \underline{\Eg_8}_{168,\tfrac{3}{10}}, \quad \SU(36)_{1215,\tfrac{1}{28}}, \quad \SU(45)_{1944,\tfrac{1}{55}}, \quad  \SO({\bf 80})_{3080,\tfrac{1}{78}}
\\[2mm] \hline  \rule{0pt}{14pt}
\SO(16) & 120 &  \SO(17)_{16,\tfrac{14}{15}}, \quad  \underline{\Eg_8}_{128,\tfrac{7}{15}}, \quad \SU(16)_{135,\tfrac{7}{16}}, \quad \SO({\bf 120})_{7020,\tfrac{1}{118}}, 
\\
&& \underline{\SO(128)}_{8008,\tfrac{1}{144}}, \quad \SO(135)_{8925,\tfrac{1}{171}},
\\[2mm] \hline  \rule{0pt}{14pt}
\Eg_7 & 133 & {\Spe(28)}_{1463,\tfrac{3}{58}}, \quad \SO({\bf 133})_{8645,\tfrac{1}{131}}   
\\[2mm] \hline \rule{0pt}{14pt}
\Eg_8 & 248 & \SO({\bf 248})   
\\[2mm] \hline \rule{0pt}{14pt}
\SO(m), m\geq 5 & \tfrac{m(m-1)}{2} & \SO(m+1), \quad \SU(m), \quad \SO({\bf \tfrac{m(m-1)}{2}}), \quad \SO(\tfrac{(m-1)(m+2)}{2}),  
\\[2mm] \hline  \rule{0pt}{14pt}
\SU(m), m\geq 4 & m^2-1 & \SU(\tfrac{m(m-1)}{2}), \quad \SU(\tfrac{m(m+1)}{2}), \quad \SO({\bf m^2-1})  
\\[2mm] \hline  \rule{0pt}{14pt}
\Spe(m), m\geq 3 & m(2m+1) & \SU(2m), \quad \SO((m-1)(2m+1)), \quad \SO({\bf m(2m+1)})  
\\[2mm] \hline\hline
\end{array}
$$}
\caption{Isotropy irreducible homogeneous spaces $G_i/K$ with $K$ simple (see \cite[Tables 3,4,5,6,7,9]{HHK}).  For each $K$ appearing in any of the last three families, the extra $G_i$'s are underlined.  We denote by $\SO({\bf d})$ the group on which $K^d$ is embedded via the adjoint representation.  The notation ${G_i}_{n_i,a_i}$ means that $\dim{G_i/K}=n_i$ and $\kil_\kg=a_i\kil_{\ggo_i}$, e.g., in the third line, $\SO(8)_{20,\tfrac{1}{6}}$ means that $\dim{\SO(8)/\SU(3)}=20$ and $\kil_{\sug(3)}=\tfrac{1}{6}\kil_{\sog(8)}$.}  \label{ii-simple}
\end{table}

In order to give an idea of the length of computations involved in deciding whether $p$ has a real root or not, we next work out three examples with the aid of Maple.   Note that any invariant metric is necessarily of the form $g=(x_1,x_2,1)_{\gk}$ up to scaling in the three cases (see \S\ref{mf-sec} below).   

\begin{example}\label{21}
For the space $M^{21}=\Gg_2\times\Spe(2)/\SU(2)$, we have that  
$$
n_1 = 11, \quad n_2 = 7, \quad d = 3, \quad  a_1 = \tfrac{1}{56}, \quad a_2 = \tfrac{1}{15}, 
$$
and so $c_1=\tfrac{71}{56}$, $\lambda=\tfrac{1}{71}$, $\kappa_1=\tfrac{15}{56}$ and $\kappa_2=\tfrac{2}{5}$.  A straightforward computation gives that 
\begin{align*}
p(x)=&\tfrac{371645834625}{48358655787008} x^4-\tfrac{15992045085375}{96717311574016} x^3
+\tfrac{18067869653625}{96717311574016} x^2 
-\tfrac{1649818125}{26985857024} x+ \tfrac{455625}{30118144} \\ 
\approx & \; 0.0076 x^4 - 0.1653 x^3 + 0.1868 x^2 - 0.0611 x + 0.0151, 
\end{align*}
and 
$$
\Delta=-0.000001495938639,\quad S=-0.07053475834, \quad R= -0.001656504408.
$$
Thus $p$ has exactly two real roots, that is, $M^{21}=\Gg_2\times\Spe(2)/\SU(2)$ admits exactly two invariant Einstein metrics by Proposition \ref{KssE} (see Figures \ref{fig} and \ref{fig2}).   
\end{example}

\begin{example}\label{29}
Consider $M^{29}=\SU(5)\times\SU(4)/\Spe(2)$, for which 
$$
n_1 = 14,\quad n_2 = 5,\quad d = 10,\quad  a_1 = \tfrac{3}{10},\quad a_2 = \tfrac{3}{4},
$$
and hence $c_1=\frac{7}{5}$,  $\lambda=\frac{3}{14}$, $\kappa_1=\frac{1}{2}$ and $\kappa_2=\frac{1}{2}$.  It is straightforward to see that 
\begin{align*}
p(x)=&\tfrac{223293}{390625} x^4-\tfrac{524104}{390625} x^3
+\tfrac{455406}{390625} x^2  -\tfrac{37128}{78125} x+ \tfrac{1521}{15625}\\ 
= & \; 0.57163008 x^4 - 1.34170624 x^3 + 1.16583936 x^2 - 0.4752384 x + 0.097344, 
\end{align*}
and 
$$
\Delta=0.0001962504947, \quad R= 0.1971272177, \quad S=-0.06909613037.
$$
This implies that $p$ has no real roots, that is, $M^{29}=\SU(5)\times\SU(4)/\Spe(2)$ does not admit an invariant Einstein metric by Proposition \ref{KssE} (see Figures \ref{fig} and \ref{fig2}).  
\end{example}

\begin{example}\label{suso}
Consider the space $M^n=\SU(m)\times\SO(m+1)/\SO(m)$, $m\geq 6$, for which it is easy to see that
$$
n=m^2+m-1, \quad d=\tfrac{m(m-1)}{2}, \quad n_1=\tfrac{(m-1)(m+2)}{2}, \quad n_2=m, \quad \kappa_1=\kappa_2=\unm,
$$
and
$$
a_1=\tfrac{m-2}{2m}, \quad a_2=\tfrac{m-2}{m-1}, \quad c_1=\tfrac{3m-1}{2m}, \quad \lambda=\tfrac{m-2}{3m-1}.
$$
With the aid of Maple, it is straightforward to see that 
$$
\Delta = \tfrac{4782969\, (m + 2)^4  (m - 1)^{12} ( m-2/3)^2 (m+1)^3 (m-1/3)^{12}}{4294967296 m^{44}} q_1(m),  
$$
$$
R=\tfrac{(m-1)^6 (3m-1)^{10}}{16777216 m^{32}} q_2(m), \qquad S= \tfrac{(3m-1)^6 (m-1)^4}{ 4096 m^{16} }  q_3(m),
$$
where $q_1,q_2,q_3$ are polynomials of degree $11$, $16$ and $6$, respectively, and that  $q_i(m)>0$ for any $m\geq 6$.  We therefore obtain that $\Delta,R,S>0$ and so $p$ has no real roots, which implies that these spaces do not admit invariant Einstein metrics by Proposition \ref{KssE}.        
\end{example}

The aligned homogeneous spaces that can be constructed as in the above two examples from the other irreducible symmetric spaces $H/K$ with $K$ simple (listed in \cite[Table 3]{HHK}) are all given in Table \ref{sym}.  The existence problem can be solved in much the same way as the above examples, obtaining that only one of these seven spaces admits an invariant Einstein metric.  

In the following example, we show that the existence problem is very sensitive to the embedding of $K$ in $G_1$ and $G_2$.  

\begin{example}\label{sp1}
Using the isotropy irreducible space $\Spe(2)/\SU(2)$ (see \cite[Table 6]{HHK}), we construct two aligned spaces 
$$
M_1=\SU(3)\times\Spe(2)/\Delta_1\SU(2), \qquad M_2=\SU(3)\times\Spe(2)/\Delta_2\SU(2),
$$
where $\pi_1(\SU(2))$ is the usual block $\SU(2)\subset\SU(3)$ for $M_1$ and it is the symmetric pair $\SO(3)\subset\SU(3)$ for $M_2$.  Thus in both cases, $n=15$, $d=3$, $n_1=5$, $n_2=7$ and the pair $(a_1,a_2)$ is respectively given by  $(\tfrac{2}{3},\tfrac{1}{15})$ and  $(\tfrac{1}{6}, \tfrac{1}{15})$.   Since the usual embedding does not satisfy that the Casimir operator is a multiple of the identity, it follows from Theorem \ref{s2E} that there is no Einstein metric of the form $g=(x_1,x_2,x_3)$ on the space $M_1$.  On the contrary, for $M_2$, the Casimir condition holds since both spaces are isotropy irreducible and it is straightforward to see that $\Delta(p)<0$, so there exists two Einstein metrics of the form $g=(x_1,x_2,x_3)$ on the space $M_2$. 
\end{example}

\begin{table} 
{\tiny 
$$
\begin{array}{c|c|c|c|c}
M=G/K & m & a_1 & a_2 &   
\\[2mm] \hline \hline \rule{0pt}{14pt}
\SO(\tfrac{(m-1)(m+2)}{2})\times\SO(m+1)/\SO(m) & \geq 5 &\tfrac{2}{(m+3)(m+2)} & \tfrac{m-2}{m-1} &  \exists, m\leq 8
\\[2mm]  \hline \rule{0pt}{14pt}
\SO(\tfrac{(m-1)(m+2)}{2})\times\SU(m)/\SO(m) & \geq 5  & \tfrac{2}{(m+3)(m+2)} & \tfrac{m-2}{2m} & \exists
\\[2mm]  \hline \rule{0pt}{14pt}
\SU(m)\times\SO(m+1)/\SO(m) & \geq 6 &\tfrac{m-2}{2m} & \tfrac{m-2}{m-1}  & \nexists
\\[2mm]  \hline \rule{0pt}{14pt}
\SO(\tfrac{m(m-1)}{2})\times\SO(m+1)/\SO(m) & \geq 6 & \tfrac{2}{m(m-1)-4} & \tfrac{m-2}{m-1}  & \nexists
\\[2mm]  \hline \rule{0pt}{14pt}
\SO(\tfrac{(m-1)(m+2)}{2})\times\SO(\tfrac{m(m-1)}{2})/\SO(m) & \geq 5  & \tfrac{2}{(m+3)(m+2)} & \tfrac{2}{m(m-1)-4} &  \exists 
\\[2mm]  \hline \rule{0pt}{14pt}
\SO(\tfrac{m(m-1)}{2})\times\SU(m)/\SO(m) & \geq 5  & \tfrac{2}{m(m-1)-4} & \tfrac{m-2}{2m} & \exists
\\[2mm]  \hline \rule{0pt}{14pt}
\SU(\tfrac{m(m+1)}{2})\times\SO(m^2-1)/\SU(m) & \geq 5  & \tfrac{2}{(m+1)(m+2)} & \tfrac{1}{m^2-3} & \exists
\\[2mm]  \hline \rule{0pt}{14pt}
\SU(\tfrac{m(m-1)}{2})\times\SO(m^2-1)/\SU(m) & \geq 5  & \tfrac{2}{(m-1)(m-2)} & \tfrac{1}{m^2-3} & \exists
\\[2mm]  \hline \rule{0pt}{14pt}
\SU(\tfrac{m(m+1)}{2})\times\SU(\tfrac{m(m-1)}{2})/\SU(m) & \geq 5  & \tfrac{2}{(m+1)(m+2)} & \tfrac{2}{(m-1)(m-2)} & \exists
\\[2mm]  \hline \rule{0pt}{14pt}
\SO(m(2m+1))\times\SU(2m)/\Spe(m) & \geq 3  & \tfrac{1}{m(2m+1)-2} & \tfrac{m+1}{2m} & \exists
\\[2mm] \hline\rule{0pt}{14pt}
\SU(2m)\times\SO((m-1)(2m+1))/\Spe(m) & \geq 3  &\tfrac{m+1}{2m} & a_m &\exists,  m\geq 10
\\[2mm]  \hline \rule{0pt}{14pt}
\SO(m(2m+1))\times\SO((m-1)(2m+1))/\Spe(m) & \geq 3 & \tfrac{1}{m(2m+1)-2} & a_m & \exists
\\[2mm] \hline\hline
\end{array}
$$}
\caption{All infinite {\bf families} that can be constructed from two different isotropy irreducible spaces $G_i/K$ with $K$ simple (see Table \ref{ii-simple}).  Here $a_m:=1-\tfrac{2m^3-3m^2-3m+2}{m(m^2-1)(2m-3)}$.}\label{flies}
\end{table}

\begin{table} 
{\tiny 
$$
\begin{array}{c|c|c|c|c|c|c|c|c|c}
M^n=G/K & n & d & n_1 & n_2 & a_1 & a_2 & c_1 & \lambda  &  
\\[2mm] \hline \hline \rule{0pt}{14pt}
\SU(5)\times \SU(4)/\Spe(2) & 29 & 10 & 14 & 5 & \tfrac{3}{10} & \tfrac{3}{4} & \tfrac{7}{5} & \tfrac{3}{14} & \nexists 
\\[2mm]  \hline \rule{0pt}{14pt}
\SU(9)\times \Fg_4/\SO(9) & 96 & 36 & 44 & 16 & \tfrac{7}{18} & \tfrac{7}{9} & \frac{3}{2} & \frac{7}{27} & \nexists
\\[2mm]  \hline \rule{0pt}{14pt}
\Eg_6\times\SU(8)/\Spe(4) & 105 & 36 & 42 & 27 & \tfrac{5}{12} & \tfrac{5}{8} & \frac{5}{3} & \frac{1}{4} & \nexists 
\\[2mm]  \hline \rule{0pt}{14pt}
\Fg_4\times\SO(10)/\SO(9) & 61 & 36 & 16 & 9 & \tfrac{7}{9} & \tfrac{7}{8} & \frac{17}{9} & \frac{7}{17} & \nexists 
\\[2mm]  \hline \rule{0pt}{14pt}
\SU(16)\times \Eg_8/\SO(16) & 383 & 120 & 135 & 128 & \tfrac{7}{16} & \tfrac{7}{15} & \frac{31}{16} & \frac{7}{31} & \exists 
\\[2mm]  \hline \rule{0pt}{14pt}
\Eg_8\times\SO(17)/\SO(16) & 264 & 128 & 16  & 120 & \tfrac{7}{15} & \tfrac{14}{15} & \frac{3}{2} & \frac{14}{45} & \nexists 
\\[2mm]  \hline \rule{0pt}{14pt}
\SU(m)\times\SO(m+1)/\SO(m) & & \tfrac{m(m-1)}{2} & \tfrac{(m-1)(m+2)}{2} &m &\tfrac{m-2}{2m} & \tfrac{m-2}{m-1} & \frac{3m-1}{2m} & \frac{m-2}{3m-1} & \nexists
\\[2mm] \hline\hline
\end{array}
$$}
\caption{All examples that can be constructed from two different irreducible {\bf symmetric} spaces $G_i/K$ with $K$ simple (see \cite[Table 3]{HHK}).  Here $\kappa_1=\kappa_2=\unm$, $a_i=\tfrac{2d-n_i}{2d}$, $c_1=\tfrac{a_1+a_2}{a_2}$ and $\lambda=\tfrac{a_1a_2}{a_1+a_2}$.  In the last line, $m\geq 6$ and $n=m^2+m-1$.} \label{sym}
\end{table}

\section{The class $\cca$}\label{mf-sec}
The isotropy representation of an aligned homogeneous space $M=G/K$ is {\it multiplicity-free} (i.e., the sum of pairwise inequivalent irreducible representations) if and only if the following conditions hold:
\begin{enumerate}[(i)]
\item $G=G_1\times G_2$ and the isotropy representations $\pg_1,\pg_2$ of $G_1/\pi_1(K)$ and $G_2/\pi_2(K)$, respectively, are both  multiplicity-free with pairwise inequivalent irreducible components. 

\item The center of $K$ has dimension $\leq 1$ (i.e., either $K$ is semisimple or $\dim{\kg_0}=1$).

\item None of the irreducible components of $\pg_1$ and $\pg_2$ is equivalent to any of the adjoint representations $\kg_0,\kg_1,\dots,\kg_t$.   
\end{enumerate} 

\begin{example}\label{dim5-E}
The lowest dimensional examples of this situation are the spaces $M^5=\SU(2)\times\SU(2)/S^1_{p,q}$, $p\ne q$ (see Example \ref{dim5}), which are the only cases with $\dim{K}=1$.  It is well known that these spaces all admit a unique invariant Einstein metric (see \cite[Example 6.9]{BhmWngZll}).  
\end{example}

In this section, we study multiplicity-free aligned homogeneous spaces $M=G/K=G_1\times G_2/K$ which in addition satisfy that 
$$
\mca^G=\{ g=(x_1,x_2,x_3):x_i>0\}, 
$$
so we need to assume that $G_1/\pi_1(K)$ and $G_2/\pi_2(K)$ are isotropy irreducible spaces and $K$ is simple.  This class of spaces will be called $\cca$.  

The existence of a $G$-invariant Einstein metric on a space in $\cca$ is therefore a case covered by Theorem \ref{s2E} and Proposition \ref{KssE}.  In this case, the graph is always connected and the B\"ohm's simplicial complex and Graev's nerve are both contractible (see \cite{BhmKrr2}).  Indeed, the only intermediate subalgebras are 
$$
\kg\oplus\pg_3 \subset \kg\oplus\pg_1\oplus\pg_3, \quad \kg\oplus\pg_1\oplus\pg_3.  
$$
Thus the existence of a $G_1\times G_2$-invariant Einstein metric on $M=G_1\times G_2/K$ does not follow from any known general existence theorem, it is actually equivalent by Proposition \ref{KssE} to the existence of a real root for the quartic polynomial $p$ given in \eqref{pE}, from which the following characterization follows.   

\begin{proposition}\label{mfE}
Let $M=G_1\times G_2/K$ be a homogeneous space in the class $\cca$, i.e., $G_1/\pi_1(K)$, $G_2/\pi_2(K)$ are different isotropy irreducible spaces and $K$ is simple, and set the numbers
$$
n_1:=\dim{G_1/K}, \quad n_2:=\dim{G_2/K}, \quad d:=\dim{K}, \quad 
\kil_\kg=a_1\kil_{\ggo_1}|_\kg, \quad \kil_\kg=a_2\kil_{\ggo_2}|_\kg.  
$$
Then $M$ admits a $G_1\times G_2$-invariant Einstein metric if and only if one of the following inequalities holds:  
\begin{enumerate}[{\rm (i)}]
\item $\Delta<0$. 

\item $\Delta>0$, $R<0$ and $S<0$.

\item $\Delta=0$, $S\leq 0$ or $T\ne 0$, 
\end{enumerate}
where $\Delta$, $R$, $S$ and $T$ are given in terms of $n_1,n_2,d,a_1,a_2$ as in Section \ref{Kss-sec}.  
\end{proposition}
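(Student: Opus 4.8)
The plan is to reduce the statement to a direct application of results already established in the excerpt, so the ``proof'' is essentially a chain of citations plus the observation that the hypothesis ``$M\in\cca$'' supplies the missing structural conditions for free. First I would note that since $M=G_1\times G_2/K$ lies in $\cca$, the space $G_i/\pi_i(K)$ is isotropy irreducible for $i=1,2$; hence Schur's lemma forces $\cas_{\chi_i}=\kappa_i I_{\pg_i}$ for some $\kappa_i\geq 0$, and $\kappa_i>0$ because $\pg_i\neq 0$ (as noted after \eqref{MgBi}, $\cas_{\chi_i}=0$ only when $M_i$ is a point). Likewise, since $K$ is simple, $\kil_\kg=a_i\kil_{\ggo_i}|_\kg$ automatically holds for some $0<a_i<1$ (cf.\ Example \ref{kill-exa} and \cite[pp.35]{DtrZll}), so $M$ is of the type described in Remark \ref{cand}, with $\lambda_1=\dots=\lambda_t=\lambda$ (here $t=1$ since $K$ is simple). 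Thus the hypotheses of Theorem \ref{s2E}(ii) and Proposition \ref{KssE} are met without any further assumption.

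Next I would use the hypothesis $\mca^G=\{g=(x_1,x_2,x_3):x_i>0\}$ to argue that \emph{every} $G$-invariant metric on $M$ is, up to scaling, of the form $g=(x_1,x_2,1)_{\gk}$. Indeed, the isotropy representation being multiplicity-free with the three summands $\pg_1,\pg_2,\pg_3$ pairwise inequivalent (which is exactly the content of $M\in\cca$ together with the discussion opening \S\ref{mf-sec}) means any invariant inner product is diagonal in this decomposition; scaling the overall metric lets us normalize $x_3=1$. Consequently, ``$M$ admits a $G$-invariant Einstein metric'' is equivalent to ``$M$ admits an Einstein metric of the form $g=(x_1,x_2,1)_{\gk}$,'' and by Proposition \ref{KssE} the latter is equivalent to the quartic $p$ in \eqref{pE} having a real root (with $x_1^2$ then determined by $x_1^2=\tfrac{-Hx_2^2}{Ex_2^2+Fx_2+G}$, and \eqref{E5} guaranteeing this is positive, as verified in the proof of Proposition \ref{KssE}).

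Finally I would invoke the classical classification of the real roots of a quartic via its discriminant $\Delta$ and the auxiliary invariants $R$, $S$, $T$, exactly as listed in the enumerated facts (i)--(iii) preceding Table \ref{ii-simple} (see \cite{Lzr,Rs}). The polynomial $p$ has a real root precisely when one falls outside case (ii\,b) and case (iii\,b), i.e.\ precisely when $\Delta<0$, or $\Delta>0$ with $R<0$ and $S<0$, or $\Delta=0$ with $S\leq 0$ or $T\neq 0$ — which are exactly conditions (i), (ii), (iii) of the Proposition. Since the coefficients $a,b,c,d,e$ of $p$ are explicit polynomial expressions in $A,\dots,H$, which in turn are explicit in $c_1,\lambda,\kappa_1,\kappa_2$, and since $c_1=\tfrac{a_1+a_2}{a_2}$, $\lambda=\tfrac{a_1a_2}{a_1+a_2}$, $\kappa_i=\tfrac{d(1-a_i)}{n_i}$ (Remark \ref{cand}), all four invariants $\Delta,R,S,T$ are genuine functions of $n_1,n_2,d,a_1,a_2$ alone. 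This establishes the claimed characterization.

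I do not expect a serious obstacle here: the only point requiring a little care is the reduction step asserting that membership in $\cca$ forces every invariant metric to be diagonal of the form $(x_1,x_2,x_3)$ — one must be sure that the three summands $\pg_1,\pg_2,\pg_3$ really are pairwise inequivalent and irreducible as $\Ad(K)$-modules. For $K$ simple this is immediate: $\pg_1,\pg_2$ are irreducible by the isotropy-irreducibility hypothesis, $\pg_3\cong\kg$ is the (irreducible) adjoint representation, and condition (iii) in the opening of \S\ref{mf-sec} (part of the definition of $\cca$) rules out $\pg_1\cong\kg$ or $\pg_2\cong\kg$, while $\pg_1\not\cong\pg_2$ is forced because $G_1/\pi_1(K)$ and $G_2/\pi_2(K)$ are \emph{different} isotropy irreducible spaces. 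Everything else is bookkeeping with results already proved.
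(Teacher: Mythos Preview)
Your proposal is correct and matches the paper's own justification, which is not given as a separate proof but is contained in the paragraph immediately preceding the proposition: the paper simply notes that for $M\in\cca$ the hypothesis $\mca^G=\{(x_1,x_2,x_3)\}$ reduces everything to Theorem~\ref{s2E}(ii) and Proposition~\ref{KssE}, so that existence is equivalent to $p$ having a real root, and then cites the quartic-root criteria from \cite{Lzr,Rs}. Your write-up spells out the same chain of reductions in more detail; the only superfluous part is the final paragraph verifying pairwise inequivalence of $\pg_1,\pg_2,\pg_3$, since this is already built into the defining condition $\mca^G=\{(x_1,x_2,x_3):x_i>0\}$ of the class $\cca$.
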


\begin{remark}
The case when $K$ is either semisimple (non-simple) or has a one-dimensional center will be considered in \cite{Rical}.
\end{remark}

\begin{example}\label{SOd}
A general construction of homogeneous spaces in $\cca$ can be given using Example \ref{so} as follows: given any isotropy irreducible space $H/K$ with $K$ simple, consider $\SO(d)/K$, where $d=\dim{K}$, and the aligned homogeneous space $M^n=\SO(d)\times H/\Delta K$.  Thus $n=\tfrac{d(d-1)}{2}+n_2$, where $n_2=\dim{H}-d$, and if $\kil_{\kg}=a_2\kil_{\hg}|_{\kg}$, then 
$$
a_1=\tfrac{1}{d-2}, \quad c_1=\tfrac{(d-2)a_2+1}{(d-2)a_2}, \quad \lambda_1=\tfrac{a_2}{(d-2)a_2+1}, \quad \kappa_1=\tfrac{2}{d-2},  \quad \kappa_2=\tfrac{d(1-a_2)}{n_2}. 
$$
This subclass of $\cca$ consists of $7$ infinite families, where $H/K$ belongs to one of the families in lines 1,2,3 of \cite[Table 3]{HHK} and lines 2,3,5,6 of \cite[Table 4]{HHK}, and $24$ isolated spaces, where $H/K$ is one of the spaces with $K$ simple in \cite[Tables 3,6,7]{HHK}.  
\end{example}

The class $\cca$ can be classified using Table \ref{ii-simple}, which contains all the isotropy irreducible homogeneous spaces $G_i/K$ with $K$ simple and was obtained from  \cite[Tables 3,4,5,6,7,9]{HHK}.  A careful inspection of Table \ref{ii-simple} gives that the class $\cca$ consists of $12$ infinite families and $70$ sporadic examples.  The existence problem for invariant Einstein metrics among $\cca$ can be solved by computing the signs of the invariants $\Delta,R,S$ given in \S\ref{Kss-sec} with the help of Maple.  The results obtained are shown in three tables:  
\begin{enumerate}[{\small $\bullet$}]
\item Among the $12$ families, existence mostly holds, there are only $3$ non-existence infinite families (see Table \ref{flies}).   

\item All the spaces such that $G_1/\pi_1(K)$ and $G_2/\pi_2(K)$ are both irreducible symmetric spaces are listed in Table \ref{sym} ($1$ family and $6$ examples).  Non-existence prevails.  

\item A number of $24$ of the $70$ sporadic examples are given in Table \ref{spo}, among which existence holds for $16$ of them.  This table includes all the spaces with an exceptional $K$, as well as with the smallest $K$'s which do not belong to any infinite family: $\SU(2)$, $\SU(3)$, $\Gg_2$, $\Spe(3)$.   

\item The remaining $41$ sporadic examples are listed in Table \ref{spo2}.  Only $6$ of them do not admit an invariant Einstein metric.     
\end{enumerate}

Each sporadic case was worked out using Maple in two different ways: 1) by computing the signs of the invariants $\Delta,R,S$ given in \S\ref{Kss-sec}, and 2) by directly solving the equations \eqref{E6} and \eqref{E7} (or equivalently, \eqref{E3} and \eqref{E4}).   

The following observations on this classification are in order:
\begin{enumerate}[{\small $\bullet$}]
\item All the existence cases have $\Delta(p)<0$, so there are exactly two Einstein metrics on each space for which existence holds.  

\item The non-existence cases all have $\Delta(p), R(p)>0$ (cf.\ conditions above Example \ref{21}).

\item If both spaces $G_1\times G_1/K$ and $G_2\times G_2/K$ admit a diagonal Einstein metric, i.e., according to Remark \ref{hhk},
$$
(2\kappa_i+1)^2\geq 8a_i(1-a_i+\kappa_i), \qquad i=1,2, 
$$
then there is an Einstein metric on the aligned space $M=G_1\times G_2/K$.   

\item The converse to the above assertion does not hold.  
\end{enumerate}

We do not know whether the above properties can be prove without using the classification, there may be a conceptual reason behind them.

\begin{table} 
{\tiny 
$$
\begin{array}{c|c|c|c|c|c|c|c|c|c}
M^n=G/K & n & d & n_1 & n_2 & a_1 & a_2 & c_1 & \lambda  &  
\\[2mm] \hline \hline \rule{0pt}{14pt}
\Spe(2)\times\SU(3)/\SU(2) & 15 & 3 & 7 & 5 & \tfrac{1}{15} & \tfrac{1}{6} & \tfrac{7}{2} & \tfrac{1}{21} & \exists 
\\[2mm]  \hline \rule{0pt}{14pt}
\Gg_2\times\SU(3)/\SU(2) & 19 & 3 & 11 & 5 & \tfrac{1}{56} & \tfrac{1}{6} & \tfrac{31}{28} & \tfrac{1}{62} & \exists 
\\[2mm]  \hline \rule{0pt}{14pt}
\Gg_2\times\Spe(2)/\SU(2) & 21 & 3 & 11 & 7 & \tfrac{1}{56} & \tfrac{1}{15} & \tfrac{71}{56} & \tfrac{1}{71} & \exists 
\\[2mm]  \hline \rule{0pt}{14pt}
\SO(8)\times \Gg_2/\SU(3) & 34 & 8 & 20 & 6 & \tfrac{1}{6} & \tfrac{3}{4} & \tfrac{11}{9} & \tfrac{3}{22} & \nexists 
\\[2mm]  \hline \rule{0pt}{14pt}
\SU(6)\times \Gg_2/\SU(3) & 41 & 8 & 27 & 6 & \tfrac{1}{10} & \tfrac{3}{4} & \tfrac{17}{15} & \tfrac{3}{34} & \nexists 
\\[2mm]  \hline \rule{0pt}{14pt}
\Eg_6\times \Gg_2/\SU(3) & 84 & 8 & 70 & 6 & \tfrac{1}{36} & \tfrac{3}{4} & \tfrac{28}{27} & \tfrac{3}{112} & \nexists 
\\[2mm]  \hline \rule{0pt}{14pt}
\Eg_7\times \Gg_2/\SU(3) & 139 & 8 & 125 & 6 & \tfrac{1}{126} & \tfrac{3}{4} & \tfrac{191}{189} & \tfrac{3}{382} & \nexists 
\\[2mm]  \hline \rule{0pt}{14pt}
\SU(6)\times\SO(8)/\SU(3) & 55 & 8 & 27 & 20 & \tfrac{1}{10} & \tfrac{1}{6} & \tfrac{8}{5} & \tfrac{1}{16} & \exists 
\\[2mm]  \hline \rule{0pt}{14pt}
\Eg_6\times\SO(8)/\SU(3) & 98 & 8 & 70 & 20 & \tfrac{1}{36} & \tfrac{1}{6} & \tfrac{7}{6} & \tfrac{1}{42} & \exists 
\\[2mm]  \hline \rule{0pt}{14pt}
\Eg_7\times\SO(8)/\SU(3) & 153 & 8 & 125 & 20 & \tfrac{1}{126} & \tfrac{1}{6} & \tfrac{22}{21} & \tfrac{1}{132} & \exists 
\\[2mm]  \hline \rule{0pt}{14pt}
\Eg_6\times\SU(6)/\SU(3) & 105 & 8 & 70 & 27 & \tfrac{1}{36} & \tfrac{1}{10} & \tfrac{23}{18} & \tfrac{1}{46} & \exists 
\\[2mm]  \hline \rule{0pt}{14pt}
\Eg_7\times\SU(6)/\SU(3) & 160 & 8 & 125 & 27 & \tfrac{1}{126} & \tfrac{1}{10} & \tfrac{68}{63} & \tfrac{1}{136} & \exists 
\\[2mm]  \hline \rule{0pt}{14pt}
\Eg_7\times\Eg_6/\SU(3) & 203 & 8 & 125 & 70 & \tfrac{1}{126} & \tfrac{1}{36} & \tfrac{9}{7} & \tfrac{1}{162} & \exists 
\\[2mm]  \hline \rule{0pt}{14pt}
\Eg_6\times\SO(7)/\Gg_2 & 85 & 14 & 64 & 7 &\tfrac{1}{9} &\tfrac{4}{5}  & \tfrac{41}{36} & \tfrac{4}{41} & \nexists 
\\[2mm]  \hline \rule{0pt}{14pt}
\SO(14)\times\SO(7)/\Gg_2 & 98 & 14 & 77 & 7 &\tfrac{1}{12} & \tfrac{4}{5}  & \tfrac{53}{48} & \tfrac{4}{53} & \nexists 
\\[2mm]  \hline \rule{0pt}{14pt}
\SO(14)\times\Eg_6/\Gg_2 & 155 & 14 & 77 & 64 &\tfrac{1}{12} & \tfrac{1}{9}  & \tfrac{7}{4} & \tfrac{1}{21} & \exists 
\\[2mm]  \hline \rule{0pt}{14pt}
\Spe(7)\times\SO(14)/\Spe(3) & 175 & 21 & 84 & 70 &\tfrac{1}{10} &\tfrac{13}{18}  & \tfrac{74}{65} & \tfrac{13}{148} & \nexists 
\\[2mm]  \hline \rule{0pt}{14pt}
\Spe(7)\times\SU(6)/\Spe(3) & 119 & 21 & 84 & 14 &\tfrac{1}{10} & \tfrac{2}{3}  & \tfrac{23}{20} & \tfrac{2}{23} & \exists 
\\[2mm]  \hline \rule{0pt}{14pt}
\SO(21)\times\Spe(7)/\Spe(3) & 295 & 21 & 189 & 84 &\tfrac{1}{19} & \tfrac{1}{10}  & \tfrac{29}{19} & \tfrac{1}{29} & \exists 
\\[2mm]  \hline \rule{0pt}{14pt}
\SO(26)\times \Eg_6/\Fg_4 & 351 & 52  & 273 & 26 & \tfrac{1}{8} & \tfrac{3}{4} & \tfrac{7}{6} & \tfrac{3}{28} & \nexists
\\[2mm]  \hline \rule{0pt}{14pt}
\SO(52)\times\Eg_6/\Fg_4 & 1352 & 52  & 1274 & 26 & \tfrac{1}{50} & \tfrac{3}{4} & \tfrac{77}{75} & \tfrac{3}{154} & \exists
\\[2mm]  \hline \rule{0pt}{14pt}
\SO(52)\times\SO(26)/\Fg_4 & 1599 & 52  & 1274 & 273 & \tfrac{1}{50} & \tfrac{1}{8} & \tfrac{29}{25} & \tfrac{1}{58} & \exists
\\[2mm]  \hline \rule{0pt}{14pt}
\SO(78)\times\SU(27)/\Eg_6 & 3653 & 78 & 2925 &650 &\tfrac{1}{76} & \tfrac{2}{27} & \tfrac{179}{152} & \tfrac{2}{179} & \exists 
\\[2mm]  \hline \rule{0pt}{14pt}
\SO(133)\times\Spe(28)/\Eg_7 & 10291 & 133 & 8645 & 1463&\tfrac{1}{131} & \tfrac{3}{58} & \tfrac{451}{393} & \tfrac{3}{451} & \exists 
\\[2mm] \hline\hline
\end{array}
$$}
\caption{A list of $24$ {\bf sporadic} examples.  Here $\kappa_i=\tfrac{d(1-a_i)}{n_i}$,  $c_1=\tfrac{a_1+a_2}{a_2}$ and $\lambda=\tfrac{a_1a_2}{a_1+a_2}$.} 
\label{spo}
\end{table}

\begin{table} 
{\tiny 
$$
\begin{array}{c|c||c|c}
M=G_1\times G_2/K & & M=G_1\times G_2/K &     
\\[2mm] \hline \hline \rule{0pt}{14pt}
\Spe(10)_{175,\tfrac{1}{11}}\times\SU(15)_{189,\tfrac{1}{10}}/\SU(6)_{35} & \exists & \SU(28)_{720,\tfrac{1}{21}}\times\underline{\Eg_7}_{70,\tfrac{4}{9}}/\SU(8)_{63} & \exists 
\\[2mm] \hline \rule{0pt}{14pt}
\SU(21)_{405,\tfrac{1}{28}}\times\Spe(10)_{175,\tfrac{1}{11}}/\SU(6)_{35} & \exists & \SU(36)_{1232,\tfrac{1}{45}}\times\underline{\Eg_7}_{70,\tfrac{4}{9}}/\SU(8)_{63} & \exists 
\\[2mm] \hline \rule{0pt}{14pt}
\SO({\bf 35})_{560,\tfrac{1}{33}}\times\Spe(10)_{175,\tfrac{1}{11}}/\SU(6)_{35} & \exists & \SO({\bf 63})_{1890,\tfrac{1}{61}}\times\underline{\Eg_7}_{70,\tfrac{4}{9}}/\SU(8)_{63}  & \exists 
\\[2mm] \hline \rule{0pt}{14pt}
\underline{\SO(16)}_{84,\tfrac{1}{4}}\times\SO(10)_{9,\tfrac{7}{8}}/\SO(9)_{36}& \nexists & \underline{\SO(70)}_{2352,\tfrac{1}{85}}\times\underline{\Eg_7}_{70,\tfrac{4}{9}}/\SU(8)_{63} & \exists 
\\[2mm] \hline \rule{0pt}{14pt}
\underline{\SO(16)}_{84,\tfrac{1}{4}}\times\underline{\Fg_4}_{16,\tfrac{7}{9}}/\SO(9)_{36}& \nexists & \underline{\SO(70)}_{2352,\tfrac{1}{85}}\times\SU(28)_{720,\tfrac{1}{21}}/\SU(8)_{63} & \exists 
\\[2mm] \hline \rule{0pt}{14pt}
\SO({\bf 36})_{594,\tfrac{1}{34}}\times\underline{\Fg_4}_{16,\tfrac{7}{9}}/\SO(9)_{36}& \exists & \underline{\SO(70)}_{2352,\tfrac{1}{85}}\times\SU(36)_{1232,\tfrac{1}{45}}/\SU(8)_{63}  & \exists 
\\[2mm] \hline \rule{0pt}{14pt}
\SO(44)_{910,\tfrac{1}{66}}\times\underline{\Fg_4}_{16,\tfrac{7}{9}}/\SO(9)_{36}& \exists & \underline{\SO(70)}_{2352,\tfrac{1}{85}}\times\SO({\bf 63})_{1890,\tfrac{1}{61}}/\SU(8)_{63} & \exists 
\\[2mm] \hline \rule{0pt}{14pt}
\underline{\SO(16)}_{84,\tfrac{1}{4}}\times\SU(9)_{44,\tfrac{7}{18}}/\SO(9)_{36}& \exists & \underline{\Spe(16)}_{462,\tfrac{5}{68}}\times\SO(13)_{12,\tfrac{10}{11}}/\SO(12)_{66} & \nexists 
\\[2mm] \hline \rule{0pt}{14pt}
\SO({\bf 36})_{594,\tfrac{1}{34}}\times\underline{\SO(16)}_{84,\tfrac{1}{4}}/\SO(9)_{36} & \exists & \underline{\Spe(16)}_{462,\tfrac{5}{68}}\times\SU(12)_{77,\tfrac{5}{12}}/\SO(12)_{66} & \exists 
\\[2mm] \hline \rule{0pt}{14pt}
\SO(44)_{910,\tfrac{1}{66}}\times\underline{\SO(16)}_{84,\tfrac{1}{4}}/\SO(9)_{36} & \exists & \SO({\bf 66})_{2079,\tfrac{1}{64}}\times\underline{\Spe(16)}_{462,\tfrac{5}{68}}/\SO(12)_{66}  & \exists 
\\[2mm] \hline \rule{0pt}{14pt}
\underline{\SO(42)}_{825,\tfrac{1}{56}}\times\SU(8)_{27,\tfrac{5}{8}}/\Spe(4)_{36} & \exists & \SO(77)_{2860,\tfrac{1}{105}}\times\underline{\Spe(16)}_{462,\tfrac{5}{68}}/\SO(12)_{66} & \exists
\\[2mm] \hline \rule{0pt}{14pt}
\underline{\Eg_6}_{42,\tfrac{5}{12}}\times\SO(27)_{315,\tfrac{23}{30}}/\Spe(4)_{36} & \nexists & \SU(36)_{1215,\tfrac{1}{28}}\times\underline{\Eg_8}_{168,\tfrac{3}{10}}/\SU(9)_{80} & \exists
\\[2mm] \hline \rule{0pt}{14pt}
\SO({\bf 36})_{594,\tfrac{1}{34}}\times\underline{\Eg_6}_{42,\tfrac{5}{12}}/\Spe(4)_{36} & \exists & \SU(45)_{1944,\tfrac{1}{55}}\times\underline{\Eg_8}_{168,\tfrac{3}{10}}/\SU(9)_{80}& \exists
\\[2mm] \hline \rule{0pt}{14pt}
\underline{\SO(42)}_{825,\tfrac{1}{56}}\times\underline{\Eg_6}_{42,\tfrac{5}{12}}/\Spe(4)_{36} & \exists & \SO({\bf 80})_{3080,\tfrac{1}{78}}\times\underline{\Eg_8}_{168,\tfrac{3}{10}}/\SU(9)_{80} & \exists
\\[2mm] \hline \rule{0pt}{14pt}
\underline{\SO(42)}_{825,\tfrac{1}{56}}\times\SO(27)_{315,\tfrac{23}{30}}/\Spe(4)_{36} & \exists & \underline{\SO(128)}_{8008,\tfrac{1}{144}}\times\SO(17)_{16,\tfrac{14}{15}}/\SO(16)_{120} & \nexists
\\[2mm] \hline \rule{0pt}{14pt}
\underline{\SO(42)}_{825,\tfrac{1}{56}}\times\SO({\bf 36})_{594,\tfrac{1}{34}}/\Spe(4)_{36} & \exists & \SO({\bf 120})_{7020,\tfrac{1}{118}}\times\underline{\Eg_8}_{128,\tfrac{7}{15}}/\SO(16)_{120} & \exists
\\[2mm] \hline \rule{0pt}{14pt}
\underline{\SU(16)}_{210,\tfrac{1}{8}}\times\SO(11)_{10,\tfrac{8}{9}}/\SO(10)_{45} & \nexists &  \underline{\SO(128)}_{8008,\tfrac{1}{144}}\times\underline{\Eg_8}_{128,\tfrac{7}{15}}/\SO(16)_{120} & \exists
\\[2mm] \hline \rule{0pt}{14pt}
\underline{\SU(16)}_{210,\tfrac{1}{8}}\times\SU(10)_{54,\tfrac{2}{5}}/\SO(10)_{45} & \exists &  \SO(135)_{8925,\tfrac{1}{171}}\times\underline{\Eg_8}_{128,\tfrac{7}{15}}/\SO(16)_{120} & \exists
\\[2mm] \hline \rule{0pt}{14pt}
\SO({\bf 45})_{945,\tfrac{1}{43}}\times\underline{\SU(16)}_{210,\tfrac{1}{8}}/\SO(10)_{45} & \exists & \underline{\SO(128)}_{8008,\tfrac{1}{144}}\times\SU(16)_{135,\tfrac{7}{16}}/\SO(16)_{120} & \exists
\\[2mm] \hline \rule{0pt}{14pt}
\SO(54)_{1386,\tfrac{1}{78}}\times\underline{\SU(16)}_{210,\tfrac{1}{8}}/\SO(10)_{45} & \exists & \underline{\SO(128)}_{8008,\tfrac{1}{144}}\times\SO({\bf 120})_{7020,\tfrac{1}{118}}/\SO(16)_{120} & \exists
\\[2mm] \hline \rule{0pt}{14pt}
& & \SO(135)_{8925,\tfrac{1}{171}}\times\underline{\SO(128)}_{8008,\tfrac{1}{144}}/\SO(16)_{120} & \exists
\\[2mm] \hline\hline
\end{array}
$$}
\caption{Remaining $41$ {\bf sporadic} examples.}\label{spo2}
\end{table}

\end{document}